\documentclass[11pt,DIV=12]{scrartcl}
\usepackage{amsmath,amsthm}
\usepackage{newlfont,mathrsfs,enumerate,amssymb,array,amscd,xcolor}

\newcommand{\mini}{{\mathrm{mini}}}

\newcommand{\bbC}{{\mathbb C}}

\newcommand{\Aut}{{\mathrm{Aut}}}

\newcommand{\Ind}{{\mathrm{Ind}}}

\newcommand{\ind}{{\mathrm{ind}}}

\newcommand{\rO}{{\mathrm{O}}}

\newcommand{\Spin}{{\mathrm{Spin}}}
\newcommand{\SL}{{\mathrm{SL}}}
\newcommand{\Sp}{{\mathrm{Sp}}}
\newcommand{\GL}{{\mathrm{GL}}}
\newcommand{\PGL}{{\mathrm{PGL}}}

\newcommand{\SO}{{\mathrm{SO}}}
\newcommand{\St}{{\mathrm{St}}}

\newcommand{\syp}{\widetilde{\mathrm{Sp}}}

\newcommand{\Hom}{{\mathrm{Hom}}}

\newtheorem{lemma}{Lemma}[section] 

\newtheorem{prop}[lemma]{Proposition}
\newtheorem{thm}[lemma]{Theorem}
\newtheorem{cor}[lemma]{Corollary}

\makeatletter
\def\blfootnote{\gdef\@thefnmark{}\@footnotetext}
\makeatother

\begin{document}


\title{Multiplicity free Weil representations arising from exceptional groups}
\author{Marcela Hanzer\thanks{hanmar@math.hr}\quad and Gordan Savin\thanks{gordan.savin@utah.edu}}

\date{
    \footnotesize{University of Zagreb $\quad $ University of Utah}
    }


\maketitle

\blfootnote{\textup{2020} \textit{Mathematics Subject Classification}:
22E46, 22E47
\newline Keywords: minimal representation, theta correspondences. 
\newline This work is  supported in part by the Croatian Science Foundation under the project IP-2022-10-4615 and by a gift No. 946504
from the Simons Foundation.}
\begin{abstract}\noindent\textbf{Abstract} 
  Using exceptional theta correspondences, we prove that certain Weil representations of $p$-adic groups are multiplicity free 
  and determine irreducible quotients. 
\end{abstract}

\section{Introduction}  \label{S:introduction} 
Let $F$ be a $p$-adic field. In this paper we consider a split simply connected group $\mathscr G$ over $F$, given by the Chevalley-Steinberg construction, 
starting from a root system $\Phi$ in the sequence 
\[ 
D_4 < E_6 < E_7 < E_8. 
\] 
Let $\Delta\subset \Phi$ be a set of simple roots. 
 Let $\varphi : \SL_2 \rightarrow  \mathscr G$ be the root $\SL_2$ corresponding to the highest root $\beta$.  
 Let $U$ and $\bar U$ be  the groups of upper and lover triangular unipotent matrices in $\SL_2$ which, under $\varphi$, 
 are identified with roots spaces for $\beta$ and $-\beta$.  The co-root $\beta^{\vee}$ is given 
 by 
 \[ 
 \beta^{\vee}(a) = \varphi \left( \begin{array}{cc} 
 a & 0 \\
 0 & a^{-1} 
 \end{array} \right) . 
 \] 
  There is a unique simple root $\alpha$ such that $\langle \alpha, \beta\rangle =1$.
   Let $\mathscr Q$ be the maximal parabolic 
  subgroup corresponding to $\alpha$. Its unipotent radical $\mathscr U$ is a Heisenberg group and $U$ is its center. 
  Similarly,  let $\bar{\mathscr Q}$ be the opposite maximal parabolic, and $\bar{\mathscr U}$ its radical. The intersection of ${\mathscr Q}$  and 
  $\bar{\mathscr Q}$ is a standard Levi. Its center is the image of the co-root $\beta^{\vee}$. Let $G$ be the derived group of the Levi. It is a simply 
  connected semi-simple group corresponding to $\Delta \setminus \{\alpha\}$.  The centralizer of $G$ in $\mathscr G$ is $\SL_2$ and vice versa. 
  Thus $(\SL_2, G)$ is a dual pair in $\mathscr G$.  
  
   Now observe that $V={\mathscr U}/ U$ is an irreducible symplectic  representation of $G$ with the (minuscule) highest weight $\beta -\alpha$. 
   The isomorphism class of the pair $(G,V)$ is given in the following table.  
 \[ 
 \begin{array}{c||c|c|c|c} 
 \mathscr G & D_4 & E_6 & E_7 & E_8  \\ \hline 
G & \SL_2^3 & {\SL_6} & \Spin_{12} & E_7  \\ \hline 
V & V_2^{\otimes 3} & \wedge^3 V_6 & V_{32} & {V_{56}}   \\ 
 \end{array}. 
 \] 
 Now fix an additive character $\psi$ of $U$. Let $\Omega_{\psi}$ be the unique irreducible representation 
 of $\mathscr U$ with the central character $\psi$. By \cite[Proposition 2]{KS_minimal} the group $G$ acts on $\Omega_{\psi}$ without need to pass to a central extension. 
 This is the Weil representation in the title of  this paper. We note that the pairs $(G,V)$ appear in Table 1 of \cite{Knop_symplectic} 
 and decomposing $\Omega_{\psi}$ is a now a part of a program \cite{BZSV}, see \cite{MWZ} and Conjecture 2.3 there, in particular.  
 In fact, Mao, Wan and Zhang study this problem using the relative trace formula, and indicate a possibility of the approach via minimal representations 
\cite[Remark 2.4]{MWZ}. This is what we do in this paper.  
 \vskip 10pt

 More precisely,  in Theorem \ref{T:mult_free}, we prove that for every irreducible representation $\pi$ of $G$, 
 \[ 
\dim  \Hom_G(\Omega_{\psi}, \pi) \leq 1 
\] 
and classify $\pi$ that appear as quotients of $\Omega_{\psi}$.  To that end we completely determine theta correspondence for the dual pair $(\SL_2, G)$. 
The analysis of $\Omega_{\psi}$ is then a simple consequence since $\Pi_{U,\psi}\cong \Omega_{\psi},$ where $\Pi$ is the minimal representation of $\mathscr G.$ 
In words, $\pi$ is a quotient of $\Omega_{\psi}$ if and only 
if it is a theta lift of a $(U,\psi)$-generic irreducible representation $\sigma$ of $\SL_2$.  
Thus most of the paper is devoted to computing the theta correspondence for the dual pair $\SL_2\times G$. 

\vskip 10pt 

From now on, for an irreducible representation $\sigma$ of $\SL_2(F)$, we denote by $\Theta(\sigma)$ the corresponding big theta lift to $G$: 
\[ 
\Theta(\sigma):= (\Pi\otimes \sigma^{\vee})_{\SL_2}. 
\] 
 Observe that any linear functional on $\Theta(\sigma)$ naturally corresponds to an $\SL_2$-invariant  bilinear functional on $\Pi \times \sigma^{\vee}$. 
 Such a functional, in turn, corresponds to an element in $\Hom_{\SL_2}(\Pi, \sigma)$.   
 In Section \ref{S:principal}, we show that  theta lifts of principal series representations of $\SL_2$ are degenerate principal series representations of $G$ 
 induced from a Siegel parabolic $P=MN$ in $G$.  In Section  \ref{S:exceptional}, using a technique of Fourier-Jacobi functors,  we prove that the correspondence 
 is one to one. 
 For all tempered representations $\sigma$ we prove that $\Theta(\sigma)$ is irreducible and 
 express it as a Langlands quotient using a tower of theta lifts, staring from the dual pair in (adjoint) $D_4$. There we have a 
 dual pair 
 \[ 
 \SL_2 \times (\SL_2^3/\mu_2^2) 
 \] 
 and we call the correspondence ``mini-theta'' since it is a building block for the other three correspondences.  
 Let $\pi$ be an irreducible representation of $\GL_2(F)$ such that the restriction to $\SL_2(F)$  contains  $\sigma$. 
 Observe that $\pi\otimes \pi \otimes \pi$ is naturally a representation of  the group $(\SL_2^3/\mu_2^2) (F)$. 
  Assume then $\sigma\neq 1$. Let 
 \[ 
 \mathrm{mini}(\sigma)= (\Pi\otimes \sigma^{\vee})_{\SL_2}.
 \] 
 In Section \ref{S:mini_theta}  we show that 
 $ \mathrm{mini}(\sigma)$ is an irreducible representation of $(\SL_2^3/\mu_2^2) (F)$ contained in $\pi\otimes \pi \otimes \pi$. 
 Moreover, mini-theta gives a one-to-one correspondence between irreducible summands of $\pi$ and irreducible summands of $\pi\otimes \pi \otimes \pi$.

 \vskip 10pt 
 With the mini-theta in place, we proceed to describe the parabolic $Q=LU$ of type $A_1^3$ in $G$. Consider the marked Dynkin diagram for $E_7$ 
 
\begin{picture}(200,120)(-120,-15)

\put(79,73){\line(0,-1){30}}
\put(79,40){\circle*{6}}
\put(154,76){\line(1,0){30}}
\put(187,76){\circle*{6}}

\put(74,29){$\alpha_2$}

\put(02,82){$\alpha_1$}

\put(38,82){$\alpha_3$}

\put(74,82){$\alpha_4$}

\put(110,82){$\alpha_5$}

\put(146,82){$\alpha_6$}

\put(182,82){$\alpha_7$}

\put(07,76){\circle{6}}
\put(10,76){\line(1,0){30}}

\put(43,76){\circle{6}}
\put(46,76){\line(1,0){30}}
\put(79,76){\circle{6}}

\put(82,76){\line(1,0){30}}
\put(115,76){\circle*{6}}

\put(118,76){\line(1,0){30}}
\put(151,76){\circle{6}}

\end{picture}

We consider  $G$ to be in the sequence $A_5 < D_6 <E_7$ where $A_5$ is picked to contain the three black dots.  
Let $Q=LU$ be the standard parabolic in $G$ corresponding to the three simple roots, that is, $[L,L]=\SL_2^3$. 
Let $S$ be the connected component of the center of $L$. So it is a split torus of dimension $2,3$ and $4$, respectively. Then, 
remarkably, $L/S\cong \SL_2^3/\mu_2^2$ (as algebraic groups) so $ \mathrm{mini}(\sigma)$ is a representation of  $L$. 
Then $\Theta(\sigma)$, the lift to $G$, is the Langlands quotient of $\Ind_Q^G( \mathrm{mini}(\sigma)\otimes \chi)$ for 
a positive character $\chi$ of $L$. More details are in Section \ref{S:parameters}, here we repackage  the result to confirm 
a local variant of  Conjecture 2.3 in \cite{MWZ}. 

\vskip 5pt 
 
 Recall that the dual group of $\SL_2$ is $\PGL_2(\mathbb C)$. 
 Let $\varphi : WD_F \rightarrow \PGL_2(\mathbb C)$ be the parameter of $\sigma$.  Since $G$ is simply connected and simply laced, 
 the dual group is $G_{ad}(\mathbb C)$, the identity component of $\Aut(G)$.  We shall omit $\mathbb C$ in our notation henceforth. 
 Recall, from \cite{KS_jordan}  that $G_{ad}$ arises via the Koecher-Tits construction starting with a Jordan algebra $J$. We have 
  have a dual pair 
 \[ 
 \PGL_2 \times \Aut(J) \subset \Aut(G). 
 \] 
 The parameter $\varphi_G$ of $\Theta(\sigma)$ is obtained in the following two steps. First, using $\varphi$, we map  $WD_F$ into 
  the first factor above. Second, we twist the Frobenius using the principal $\SL_2$ in $\Aut(J)$, 
 $\Psi :\SL_2(\mathbb C) \rightarrow \Aut(J)$: 
\[ 
\varphi_G(\mathrm{Fr}) = \varphi(\mathrm{Fr}) \times \Psi\left(\begin{smallmatrix}q^{1/2} & 0 \\ 0 & q^{-1/2} \end{smallmatrix}\right). 
\]

\section{Theta correspondence for principal series} \label{S:principal}  

 Let $B=TU$  be a Borel subgroup of upper triangular matrices in $\SL_2$. Let $t=(a,a^{-1})$  be a diagonal matrix in $T$.  
 Then $\delta_{B}(t)^{1/2}=|a|$ is the modular character.  If $\chi$ is a character of $F^{\times}$, let 
  $\Ind_B^{\SL_2}(\chi)$ denote the normalized induced representation. The goal of this section is to (almost) compute big-theta for 
  constituents of principal series representations. 
   
 \medskip 
 
 \subsection{A degenerate principal series for $G$}  Let $(G,V)$ be a pair as above, but not $(\SL_2^3, V_2^{\otimes 3}$). Let $\varpi$ be the highest weight of $V$, and let 
 $P=MN$ be the maximal parabolic subgroup of $G$ stabilizing the highest weight line. 
 Then $M$ acts on the line by the character which we denote by $\varpi$, 
 a slight abuse of notation.  The weight $\varpi$ is a minuscule highest weight, in particular, $N$ is abelian. Its dimension is $3+6r$ where $r=1,2,4$ in the three 
 cases.  The cases are given by the following table, where in the last row we explain how $M^{\mathrm{der}}$  acts on $N$ using the ``standard'' representations. 
  \[ 
 \begin{array}{c||c|c|c} 
 \mathscr G &  E_6 & E_7 & E_8  \\ \hline 
G &  {\SL_6} & \Spin_{12} & E_7  \\ \hline 
M^{\mathrm {der}} &  \SL_3\times \SL_3  & \SL_6 &  E_6  \\ \hline 
N &  V_3\otimes V^{\ast}_3  & \wedge^2 V_6 &  V_{27}  \\ 
 \end{array}. 
 \]

  Let $\chi$ be a character of $F^{\times}$. 
 The composite $\chi\circ \varpi$ gives a character of $M$ and a normalized degenerate principal series $\Ind_P^G(\chi)$. Reducibility points and composition series 
 have been determined by Weissman \cite{Weissman_FJ_small}. Observe that $\Ind_P^G(\chi^{-1})$ is the contragredient of $\Ind_P^G(\chi)$: 
 
 \begin{prop} \label{P:degenerate_ps} 
  Let $r=1,2,4$, so that $\dim N=3+6r$.  Then $\Ind_P^G(\chi)$ is irreducible unless 
 \begin{itemize} 
 \item $\chi$ is quadratic and non-trivial. Then $\Ind_P^G(\chi)$ is a direct sum of two irreducible representations. 
 \item $\chi=|\cdot|^s$ where $\pm s=1, 1+r, 1+2r$, when $\Ind_P^G(\chi)$ has length 2, with unique irreducible quotient and irreducible sub. 
 \end{itemize} 
 The trivial representation $1_G$ is the unique quotient for $s=1+2r$. The minimal representation $\Pi_G$ is the unique quotient for $s=1+r$.  
  \end{prop}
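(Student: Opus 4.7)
The plan is to apply the standard machinery for degenerate principal series induced from maximal parabolics with abelian unipotent radical: the geometric lemma to compute Jacquet modules, a factorization of the long intertwining operator into rank-one contributions to detect reducibility, and explicit identification of the two distinguished quotients.

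First, note that in all three cases $P=MN$ is cominuscule and the Levi $M$ acts on $N$ with exactly four orbits, corresponding to rank in the associated Jordan algebra structure (matrix rank on $3\times 3$ matrices for $E_6$, Pfaffian rank on $\wedge^2 V_6$ for $E_7$, and Jordan rank on $V_{27}$ for $E_8$). Correspondingly, the set of double cosets $W_M\backslash W/W_M$ has four representatives $1,w_1,w_2,w_0$ of increasing length. Applying the geometric lemma, one equips $(\Ind_P^G(\chi))_N$ with a four-step filtration whose subquotients are one-dimensional on the connected center $Z_M^{\circ}$, with eigencharacters $\chi\cdot|\cdot|^{a_i}$ for explicit exponents $a_i$ read off from the action of $w_i$ on $\varpi$. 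Consequently $\Ind_P^G(\chi)$ can be reducible only when two of these characters coincide, forcing either $\chi$ quadratic or $\chi=|\cdot|^s$ with $\pm s\in\{1,1+r,1+2r\}$.

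Next, one pins down the reducibility via the long intertwining operator $M(w_0):\Ind_P^G(\chi)\to\Ind_P^G(\chi^{-1})$. A reduced expression for $w_0$ modulo $W_M$, combined with the orbit stratification of $N$, factors the normalizing scalar of $M(w_0)$ into a product of three zeta ratios, one for each non-trivial $M$-orbit on $N$; its zeros and poles occur precisely at the listed values of $s$, and a rank-one Plancherel measure argument at each point shows that the composition length is exactly two. For non-trivial quadratic $\chi$ we have $\chi=\chi^{-1}$, so a parallel analysis shows that $M(w_0)$ is then an endomorphism whose square is a non-zero scalar, giving the direct-sum splitting.

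It remains to identify the distinguished quotients. At $s=1+2r$ the character $|\cdot|^{1+2r}\circ\varpi$ equals the modular character $\delta_P^{1/2}$, so Frobenius reciprocity yields a surjection $\Ind_P^G(\delta_P^{1/2})\twoheadrightarrow 1_G$, which must be the unique irreducible quotient by the length-two bound. At $s=1+r$ one would identify the unique quotient with the minimal representation $\Pi_G$ by computing its $N$-Jacquet module, which consists of a single $M$-character with exponent $|\cdot|^{1+r}\circ\varpi$, and applying Frobenius reciprocity to realize $\Pi_G$ as an irreducible quotient of $\Ind_P^G(|\cdot|^{1+r})$. The main obstacle is precisely this last identification: it rests on the non-trivial input, due to Torasso and Savin in these exceptional cases, that the Jacquet module of $\Pi_G$ along $P$ is one-dimensional with the stated exponent. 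Everything else is a formal consequence of the geometry of cominuscule parabolics, but pinning down the quotient at $s=1+r$ requires deep structural properties of $\Pi_G$ specific to each exceptional group.
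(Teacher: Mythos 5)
The paper does not actually prove Proposition \ref{P:degenerate_ps}: it is imported from Weissman \cite{Weissman_FJ_small}, who determines the reducibility points and composition series of these degenerate principal series by means of Fourier--Jacobi functors (the same tool the paper itself uses in Sections \ref{S:classical} and \ref{S:exceptional}). So there is no in-paper argument to compare against; your proposal is an attempt at an independent proof by the other standard route (geometric lemma plus intertwining operators), and its skeleton is reasonable: four $M$-orbits on $N$ stratified by Jordan rank, four double cosets in $W_M\backslash W/W_M$, central exponents giving necessary conditions for reducibility, the trivial quotient at $s=1+2r$ because $\delta_P^{1/2}=|\varpi(\cdot)|^{1+2r}$, and the minimal representation at $s=1+r$ via its $N$-Jacquet module --- which you correctly flag as the deep external input (it is supplied by \cite{MS}, \cite{GS_minimal}).

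As a proof, however, the proposal has genuine gaps exactly where the work lies. First, coincidence of the Jacquet-module exponents is only a \emph{necessary} condition for reducibility; to get irreducibility off the listed points you still need the standard supplement (every irreducible subquotient has a nonzero Jacquet module, plus comparison of $\Ind_P^G(\chi)$ with its contragredient $\Ind_P^G(\chi^{-1})$), and nothing in your sketch shows that reducibility actually \emph{occurs} at the listed points. Second, and more seriously, the claim that the length is exactly two at each point is the heart of the proposition and is not established: for a degenerate principal series the inducing datum is a character of a non-minimal Levi, so the long intertwining operator does not factor into honest rank-one operators and there is no ``rank-one Plancherel measure'' to invoke; one must separately rule out length $\ge 3$ when several of the four exponents interact (e.g.\ at $s=1$). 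Likewise, in the quadratic case the statement that $M(w_0)^2$ is a nonzero scalar presupposes holomorphy and non-vanishing of the normalized operator at that point, which is part of what has to be proved. This is precisely the content that Weissman's Fourier--Jacobi induction supplies. Your write-up should therefore be regarded as a program rather than a proof; either cite \cite{Weissman_FJ_small}, as the paper does, or carry out the Fourier--Jacobi analysis in full.
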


 \subsection{Minimal representation} 
 \label{minimal_rep} 
 Let $\Pi$ be the minimal representation of $\mathscr G$ (\cite{KS_minimal}, \cite{GS_minimal}). Let $\Omega\subset V$ be the $G$-orbit  of non-zero highest weight vectors. 
 By  \cite[Theorem 6.1]{MS} we have an exact sequence of $\mathscr Q$-modules 
 \[ 
 0\rightarrow C_c(\Omega) \rightarrow \Pi_{U} \rightarrow \Pi_{\mathscr U} \rightarrow 0 
 \] 
 where $t\in T$ acts on $f\in C_c(\Omega)$ by 
 \[ 
 |a|^{2+2r} f(ax) 
 \] 
 and $g\in G$ by 
  \[ 
 f(g^{-1}x). 
 \]

 Furthermore, $\Pi_{\mathscr U}$, as a $G$-module, is a direct sum 
 \[ 
 \Pi_{\mathscr U} \cong \Pi[G] \otimes |a|^{2+r} \oplus \mathbb C \otimes |a|^{2+2r} 
 \] 
 of the minimal representation $\Pi[G]$ of $G$, and the trivial representation.  
 On the two summands the torus $T$  acts by the characters $|a|^{2+r}$ and $|a|^{2+2r}$, as indicated. 
We shall now rewrite the above filtration for the normalized Jacquet functor 
 $r_{B}(\Pi)= \delta_B^{-1/2} \cdot \Pi_U$.  
 
 \begin{prop}\label{P:filtration} 
 The  normalized Jacquet functor 
 $r_{B}(\Pi)= \delta_B^{-1/2} \cdot \Pi_U$, as a $T\times G$-module, has a filtration 
 \begin{itemize} 
 \item with a quotient  
 \[ 
r_{B}(\Pi_{\mathscr U}) =  \Pi[G]  \otimes |a|^{1+r} \oplus 1 \otimes |a|^{1+2r} 
\] 
where $t \in T$, given as the diagonal matrix $(a,a^{-1})$, acts as indicated, 
\item and a sub (normalized parabolic induction) 
\[ 
 \Ind_{P \times T}^{G \times T} (C_c(\GL_1))
\] 
where the action of $(t,m)\in T\times M$ on $f\in C_c(GL(1))$ is given by 
 \[ 
 f(\varpi(m)^{-1}a x ). 
 \] 
 
 \end{itemize}

 \end{prop}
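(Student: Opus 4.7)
The plan is to start from the exact sequence
\[
0 \to C_c(\Omega) \to \Pi_U \to \Pi_{\mathscr U} \to 0
\]
from Section \ref{minimal_rep}, apply $\delta_B^{-1/2}$ to pass from the unnormalized Jacquet functor to $r_B(\Pi)$, and then identify the resulting sub and quotient with the claimed descriptions. The quotient is immediate: multiplying $\Pi_{\mathscr U} = \Pi[G] \otimes |a|^{2+r} \oplus \mathbb C \otimes |a|^{2+2r}$ by $\delta_B^{-1/2}(t) = |a|^{-1}$ yields precisely $\Pi[G] \otimes |a|^{1+r} \oplus 1 \otimes |a|^{1+2r}$.

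For the sub, the task is to identify $\delta_B^{-1/2} \cdot C_c(\Omega)$ with the stated normalized induction. Fix a nonzero highest weight vector $v_0 \in V$. Since $M$ acts on $F v_0$ by $\varpi$ and the abelian unipotent radical $N$ fixes $v_0$, the stabilizer of $v_0$ in $G$ is $H := \ker(\varpi|_M) \cdot N$. Hence $\Omega \cong G/H$ and $C_c(\Omega) \cong \ind_H^G \mathbf{1}$, and induction in stages gives
\[
C_c(\Omega) \cong \ind_P^G \bigl( \ind_H^P \mathbf{1} \bigr).
\]
Via $P/H \cong M/\ker(\varpi) \xrightarrow{\varpi} \GL_1$, the inner representation is $C_c(\GL_1)$ with $P$-action $(mn) \cdot f = f(\varpi(m)^{-1} x)$. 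Converting compact to normalized parabolic induction via $\ind_P^G(\tau) = \Ind_P^G(\delta_P^{-1/2}\tau)$ introduces a twist by $\delta_P^{-1/2}$ on the $M$-action, which is a power of $|\varpi|$ because $N$ is an irreducible $M^{\mathrm{der}}$-module on which the central $\GL_1 \subset M$ acts through a power of $\varpi$.

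For the $T$-action: $T \subset \SL_2$ centralizes $G$ and acts on $V$ by the scalar $a$, so the MS formula $|a|^{2+2r} f(av)$ on $C_c(\Omega)$, after twisting by $\delta_B^{-1/2}$, yields on $C_c(\GL_1)$ a $T$-action of the form $|a|^{1+2r}$ times translation by $a$. Together with the prefactor $|\varpi(m)|^{-k}$ from $\delta_P^{-1/2}$, the combined $T \times M$-action on $C_c(\GL_1)$ is a scalar character of $T \times M$ times translation by $\varpi(m)^{-1} a$. Provided $k = 1 + 2r$, the combined scalar becomes $|a \, \varpi(m)^{-1}|^{1+2r}$, which is a power of the translation parameter and so can be absorbed by the self-isomorphism $\phi : f(x) \leadsto |x|^{1+2r} f(x)$ of $C_c(\GL_1)$ (which conjugates left translation $L_y$ into $|y|^{1+2r} L_y$), leaving the clean action $f \mapsto f(\varpi(m)^{-1} a x)$ asserted in the proposition.

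The main obstacle is verifying this last cancellation: one must check that $\delta_P^{1/2} = |\varpi|^{1+2r}$, so that a single rescaling $\phi$ absorbs both prefactors at once. This reduces to a case-by-case computation of $\det(\rAd\, m|_N)$ in terms of $\varpi$ using the explicit structure of $N$ as an $M^{\mathrm{der}}$-module tabulated in Section 2.1 — for $r=1$ one has $\delta_P = |\det g_1|^6 = |\varpi|^6$ on $M = S(\GL_3 \times \GL_3)$, and the analogous computations in the $r=2$ and $r=4$ cases yield the required exponents (with the appropriate conventional normalization of $\varpi$ on the half-spin and $E_6$-cominuscule sides).
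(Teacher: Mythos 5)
Your argument is correct and is essentially the paper's own proof: the paper likewise realizes $C_c(\Omega)$ as a compact induction from the stabilizer of a highest weight vector, inducts in stages through $P$, and observes that the normalizing character $\delta_P^{1/2}=|\varpi|^{1+2r}$ matches the $T$-twist $|a|^{1+2r}$ coming from $\delta_B^{-1/2}$ (your rescaling $\phi$ makes explicit what the paper compresses into ``which matches $\delta_P^{1/2}(m)$''). The only cosmetic difference is that the paper works with the stabilizer in $T\times G$, for which transitivity on the punctured cone $\Omega$ is automatic, whereas your $\Omega\cong G/H$ additionally uses that $\varpi$ is surjective on $F$-points of $M$ — true in all three cases, but worth a word.
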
 
 \begin{proof} 
 The first bullet is clear. For the second, we need to rewrite the bottom $\delta^{-1/2}_B\cdot C_c(\Omega)$ as the induced representation. 
 Pick $v\in \Omega$ so that the stabilizer of the line through $v$ in $G$ is the maximal parabolic 
 $P=MN$.  In particular, $M$ acts on the line through $v$ by 
 the fundamental minuscule character $\varpi$.  Thus the stabilizer in $T\times G$ of $v$ is the set of 
 pairs $(t, mn)\in T \times P$ such that $a=\varpi(m)$. Thus $C_c(\Omega)$ is compactly induced (to $T\times G$) from the stabilizer. 
 Now the second bullet follows from induction in stages, first to $T\times P$, which gives us $C_c(\GL(1))$. 
 It remains to check normalizations for the parabolic induction. 
To that end, the modular character for $P$ is 
 \[ 
 \delta_{P}^{1/2}(m)=|\varpi(m)|^{1+2r}. 
 \] 
 The action of $t\in T$ on $f \in \delta^{-1/2}_B\cdot C_c(\Omega)$ is by $|a|^{1+2r} f(ax)$, which matches $\delta_{P}^{1/2}(m)$, as needed. 
 \end{proof}

 \medskip 
 
 \subsection{Computing the theta correspondence for principal series representations of $G$.} 
 \label{principal_series_SL}
 Recall that $\Ind_B^{\SL_2}(\chi)$ is irreducible unless $\chi=|\cdot|^{\pm 1}$ or $\chi$ is a non-trivial 
 quadratic character.  
 The induced representation $\Ind_B^{\SL_2}(|\cdot |)$ has the Steinberg representation $\St$ as a submodule, and the trivial representation  $1_{\SL_2}$ as a 
 quotient.  If $\chi$ is quadratic, but not trivial, then $\Ind_B^{\SL_2}(\chi)= \pi_{\chi}^{+}\oplus \pi_{\chi}^{-}$ is a direct sum of two irreducible 
 representations. If $\Ind_B^{\SL_2}(\chi)$ is irreducible, then $\Ind_{B}^{\SL_2}(\chi) \cong \Ind_{B}^{\SL_2}(\chi^{-1})$. Thus, 
 without loss of generality, if  $\Ind_B^{\SL_2}(\chi)$ is irreducible we can assume that $|\chi|$ is a positive power of $|\cdot |$.  
 
 \begin{prop}
 \label{prop_princ_series_SL2}  
 With the above conventions: 
 \begin{itemize} 
 \item If $\pi=\Ind_{B}^{\SL_2}(\chi)$ is irreducible, with $|\chi|$ is a positive power of $|\cdot |$,  then $\Theta(\pi)= \Ind_P^G(\chi)$. 
 \item $\Theta(1_{\SL_2})$ is a quotient of $ \Ind_P^G(|\cdot |)$ and $\Theta(\St)$ is a quotient of $ \Ind_P^G(|\cdot |^{-1})$. 
 \item If $\chi$ is non-trivial quadratic, then $\Theta(\pi_{\chi}^{\pm})$ are quotients of $ \Ind_P^G(\chi)$. 
 \end{itemize} 
  \end{prop}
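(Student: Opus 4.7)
The plan is to identify $\Theta(\pi)$ for the principal series $\pi=\Ind_B^{\SL_2}(\chi)$ by combining Frobenius reciprocity with the Jacquet-module filtration of Proposition~\ref{P:filtration}. As noted in the introduction, the linear dual of $\Theta(\sigma)$ is canonically $\Hom_{\SL_2}(\Pi,\sigma)$ as a $G$-module, so the task reduces to computing this Hom space. Normalized Frobenius on the $\SL_2$-side gives
\[
\Hom_{\SL_2}(\Pi,\Ind_B^{\SL_2}(\chi))\cong\Hom_T(r_B(\Pi),\chi)
\]
as smooth $G$-modules, converting the problem into one about the Jacquet module $r_B(\Pi)$.

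The heart of the computation is to analyze this $\Hom_T$ via the two-step filtration of $r_B(\Pi)$: the sub $\Ind_P^G(C_c(\GL_1))$ and the quotient $\Pi[G]\otimes|a|^{1+r}\oplus\mathbb{C}\otimes|a|^{1+2r}$. When $|\chi|$ is a positive power of $|\cdot|$ but $\chi$ avoids the special values $|\cdot|^{1+r}$ and $|\cdot|^{1+2r}$, the fixed $T$-characters on the quotient pieces simply do not match $\chi$, so $\Hom_T$ kills that part and the entire contribution comes from the sub. For the sub one commutes $\Hom_T$ with the parabolic induction, using that the $T$- and $G$-actions commute. The one-dimensional fiber $\Hom_T(C_c(\GL_1),\chi)$ is realized by the Mellin-type functional $f\mapsto\int f(x)\chi(x)^{-1}\,d^{\times}x$, and a direct computation shows $M$ acts on this line by $\chi\circ\varpi$. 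Inducing back to $G$, and absorbing the $|a|^{1+2r}$ twist of Proposition~\ref{P:filtration} (which equals $\delta_P^{1/2}$ under $a=\varpi(m)$) into the normalization, produces $\Ind_P^G(\chi)$, yielding the first bullet. At generic $\chi$ the standard long-Weyl intertwining operator gives $\Ind_P^G(\chi)\cong\Ind_P^G(\chi^{-1})$, removing any orientation ambiguity.

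For the reducibility bullets I would exploit that the functor $\sigma\mapsto(\Pi\otimes\sigma^{\vee})_{\SL_2}$ is contravariant on $\sigma$ (via the anti-equivalence $\sigma\mapsto\sigma^{\vee}$) and right exact. Applied to $0\to\St\to\Ind_B^{\SL_2}(|\cdot|)\to 1_{\SL_2}\to 0$ and its contragredient $0\to 1_{\SL_2}\to\Ind_B^{\SL_2}(|\cdot|^{-1})\to\St\to 0$, and combined with the generic computation (which applies since $|\cdot|^{\pm 1}$ avoids the reducibility values of the filtration) giving $\Theta(\Ind_B^{\SL_2}(|\cdot|^{\pm 1}))\cong\Ind_P^G(|\cdot|^{\mp 1})$, one obtains surjections $\Ind_P^G(|\cdot|^{-1})\twoheadrightarrow\Theta(\St)$ and $\Ind_P^G(|\cdot|)\twoheadrightarrow\Theta(1_{\SL_2})$, exactly the second bullet. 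For the quadratic case, $\Ind_B^{\SL_2}(\chi)=\pi_\chi^{+}\oplus\pi_\chi^{-}$ is a direct sum; since $\Theta$ sends direct sums to direct sums, one presents $\Theta(\pi_\chi^{+})\oplus\Theta(\pi_\chi^{-})$ as a quotient (in fact a direct summand) of $\Ind_P^G(\chi)$, giving each $\Theta(\pi_\chi^{\pm})$ as a quotient as claimed.

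The main technical obstacle I anticipate is making the ``commute $\Hom_T$ with induction'' step fully rigorous: the identification $(\Ind_P^G V)_{T,\chi}\cong\Ind_P^G(V_{T,\chi})$ is cleanest via Bernstein's second adjoint theorem, and one has to track the modular-character factors with care so that the $|a|^{1+2r}$ twist in the filtration is correctly absorbed into the $\delta_P^{1/2}$-normalization of the induced representation. A subsidiary subtlety is the behavior at the reducibility values $\chi=|\cdot|^{1+r},|\cdot|^{1+2r}$ of $\Ind_P^G$, where the quotient piece of $r_B(\Pi)$ does contribute nontrivially---precisely through the minimal representation $\Pi[G]$---and one must check that this extra contribution is compatible with the sub/quotient structure of the reducible $\Ind_P^G(\chi)$ (whose unique irreducible quotient is itself the minimal representation $\Pi_G$ of $G$).
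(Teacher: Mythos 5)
Your overall strategy coincides with the paper's: dualize via $\Hom_{\SL_2}(\Pi,-)$, apply Frobenius reciprocity, run the two-step filtration of $r_B(\Pi)$ from Proposition~\ref{P:filtration}, kill the quotient piece by a character mismatch on $T$, and identify the contribution of the sub with the full linear dual of a degenerate principal series (the paper's tool for the last step is \cite[Lemma 9.4]{GG}; your ``commute coinvariants with induction'' is the same elementary fact, and Bernstein's second adjointness is not needed). Your treatment of the reducible cases via contravariance and right-exactness of $\sigma\mapsto(\Pi\otimes\sigma^\vee)_{\SL_2}$ is equivalent to the paper's inclusion $\Hom_{\SL_2}(\Pi,\pi)\subseteq\Hom_{\SL_2}(\Pi,\Ind_B^{\SL_2}(\chi^{-1}))$ and works for the second and third bullets.

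There is, however, a genuine gap in your proof of the first bullet, and it is exactly the ``subsidiary subtlety'' you flag but do not resolve. You run Frobenius reciprocity toward $\Hom_T(r_B(\Pi),\chi)$, so the quotient piece of the filtration is killed only when $\chi\neq|\cdot|^{1+r},|\cdot|^{1+2r}$. But for $r=1,2,4$ the characters $|\cdot|^{1+r}$ and $|\cdot|^{1+2r}$ give \emph{irreducible} principal series of $\SL_2$ with $|\chi|$ a positive power of $|\cdot|$, so they fall squarely under the first bullet, and there $\Pi[G]\otimes|a|^{1+r}$ (resp.\ $\mathbb{C}\otimes|a|^{1+2r}$) contributes to your $\Hom_T$ and your computation breaks down. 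Moreover your fallback --- the intertwining operator $\Ind_P^G(\chi)\cong\Ind_P^G(\chi^{-1})$, which you need anyway because your orientation actually produces $\Ind_P^G(\chi^{-1})$ rather than $\Ind_P^G(\chi)$ (check the $M$-action on the Mellin line: it is $\chi^{-1}\circ\varpi$ for the coinvariant quotient) --- fails precisely at these points, since $s=\pm(1+r),\pm(1+2r)$ are reducibility points of $\Ind_P^G$ by Proposition~\ref{P:degenerate_ps}, where $\Ind_P^G(\chi)$ and $\Ind_P^G(\chi^{-1})$ are non-isomorphic (one has the minimal, resp.\ trivial, representation as quotient, the other as sub). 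The paper sidesteps all of this by writing $\pi$ as a \emph{sub} of $\Ind_B^{\SL_2}(\chi^{-1})$ and computing $\Hom_T(r_B(\Pi),\chi^{-1})$: since $|\chi^{-1}|$ is then a negative power of $|\cdot|$, the conditions $\chi^{-1}\neq|\cdot|^{1+r},|\cdot|^{1+2r}$ hold automatically in every case, both the $\Hom_T$ and the $\mathrm{Ext}_T$ terms of the long exact sequence vanish (you should also say a word about the $\mathrm{Ext}^1$ term, which is needed to conclude that the middle $\Hom$ equals the $\Hom$ of the sub), and the answer comes out directly as the full dual of $\Ind_P^G(\chi)$ with the correct orientation. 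You should adopt this orientation throughout; with it, your argument closes.
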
 
 \begin{proof} 
 We shall compute $\Hom_{\SL_2}(\Pi, \pi)$, the linear dual of $\Theta(\pi)$.  
 Assume $\pi$ is quotient of $\Ind_{B}^{\SL_2}(\chi)$, so it is a sub of $\Ind_{B}^{\SL_2}(\chi^{-1})$. Then 
 \[ 
  \Hom_{\SL_2}(\Pi, \pi) \subseteq  \Hom_{\SL_2}(\Pi, \Ind_{B}^{\SL_2}(\chi^{-1})), 
 \] 
  and we shall now compute the latter. By the Frobenius reciprocity, 
 \[ 
   \Hom_{\SL_2}(\Pi, \Ind_{B}^{\SL_2}(\chi^{-1})) \cong  \Hom_{T}(r_{B}(\Pi), \chi^{-1}).
  \] 
  Now we use the two step filtration of $r_{B}(\Pi)$ in Proposition \ref{P:filtration}. It gives a long exact sequence 
  \[ 
   \Hom_{T}( r_{B}(\Pi_{\mathscr U}), \chi^{-1}) \rightarrow  \Hom_{T}(r_{B}(\Pi), \chi^{-1})
  \rightarrow   \Hom_{T}(\Ind_{P\times T}^{G\times T} (C_c(\GL_1)), \chi^{-1}) \rightarrow \mathrm{Ext}_{T}( r_{B}( {\Pi_{\mathscr U}}), \chi^{-1})
\]  
 Since  $\chi^{-1}(a) \neq |a|^{1+r}, |a|^{1+2r}$, in each of the three bullets, the first and last term are 0. Hence 
 \[ 
 \Hom_{T}(r_{B}(\Pi), \chi^{-1})
  \cong \Hom_{T}( \Ind_{P\times T}^{G\times T} (C_c(\GL_1)), \chi^{-1}). 
 \] 
 Now observe that $(C_c(\GL_1)\otimes \chi)_T\cong \chi$, as $M$-modules, where $\chi$ is pulled to $M$ via $\varpi$.  Then \cite[Lemma 9.4]{GG}  gives 
 \[ 
 \Hom_{T}( \Ind_{P\times T}^{G\times T} (C_c(\GL_1)), \chi^{-1}) \cong \Hom_{\mathbb C}( \Ind_{P}^{G} (\chi), \mathbb C).
 \] 
  Hence $\Theta(\pi)$ is a quotient of  $\Ind_P^G(\chi)$, as claimed. 
 \end{proof} 
 
 In Section \ref{S:exceptional} we shall strengthen the above proposition and prove that $\Theta(1_{\SL_2})$ and $\Theta(\St)$ are both irreducible, and 
 that $\Theta(\pi_{\chi}^{\pm})$ are the two irreducible summands of $ \Ind_P^G(\chi)$ (the case $\chi$ is quadratic, non-trivial).

 \subsection{Computing the theta correspondence for principal series representations of $(G,V)=(\SL_2^3,V_2^{\otimes 3})$} 
 The statement is slightly different in this case, however the computation is similar as in the other three cases covered by 
  Proposition \ref{prop_princ_series_SL2}. 

  \begin{prop} \label{P:mini_principal} 
  If $\pi$ is an irreducible quotient of $\Ind_B^{\SL_2}(\chi)$, but not the Steinberg representation, then $\Theta(\pi)$ is a quotient of 
  $\Ind_B^{\SL_2}(\chi)\otimes \Ind_B^{\SL_2}(\chi)\otimes \Ind_B^{\SL_2}(\chi)$.  
   \end{prop}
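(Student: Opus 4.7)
The plan is to adapt the proof of Proposition \ref{prop_princ_series_SL2} to the $D_4$ case with only small modifications. First I would identify the analog of the Siegel parabolic $P=MN$ in this setting: for $G=\SL_2^3$ acting on $V = V_2^{\otimes 3}$, the stabilizer of the highest weight line is the Borel $B^3$, so here $P = B^3$, $\dim N = 3$, and the formula $\dim N = 3+6r$ gives $r=0$. The character $\varpi : T^3 \to \GL_1$ is the product $(t_1,t_2,t_3)\mapsto t_1 t_2 t_3$, so $\Ind_{P}^{G}(\chi\circ\varpi) = \Ind_B^{\SL_2}(\chi)^{\otimes 3}$ is exactly the representation appearing on the right-hand side. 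One checks $\delta_P^{1/2}(m) = |\varpi(m)| = |\varpi(m)|^{1+2r}$ with $r=0$, so the normalizations used in Proposition \ref{P:filtration} remain consistent.

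Next I would establish the analog of the filtration of $r_B(\Pi)$ in the $D_4$ case. The short exact sequence $0 \to C_c(\Omega) \to \Pi_U \to \Pi_\mathscr{U} \to 0$ of \cite[Theorem 6.1]{MS} applies uniformly across the Heisenberg parabolic $\mathscr Q$ of the groups in the sequence $D_4 < E_6 < E_7 < E_8$, and the induction-in-stages argument producing $\Ind_{P\times T}^{G\times T}(C_c(\GL_1))$ as the bottom piece depends only on $P$ being the stabilizer of the highest weight line. The top quotient $r_B(\Pi_\mathscr{U})$ should, as in Proposition \ref{P:filtration}, be supported on the torus characters $|\cdot|^{1+r}$ and $|\cdot|^{1+2r}$, both equal to $|\cdot|$ when $r=0$.

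Third, the rest of the argument is a line-by-line transcription of the proof of Proposition \ref{prop_princ_series_SL2}. Embed $\pi \hookrightarrow \Ind_B^{\SL_2}(\chi^{-1})$ and apply Frobenius reciprocity to obtain
\[
\Hom_{\SL_2}(\Pi, \pi) \hookrightarrow \Hom_T\bigl(r_B(\Pi), \chi^{-1}\bigr).
\]
The hypothesis $\pi \neq \St$ forces $\chi \neq |\cdot|^{-1}$, hence $\chi^{-1} \neq |\cdot|$, which kills both the $\Hom_T$ and $\mathrm{Ext}^1_T$ contributions from $r_B(\Pi_\mathscr{U})$. The long exact sequence collapses, and \cite[Lemma 9.4]{GG} identifies the remaining Hom with the linear dual of $\Ind_B^{\SL_2}(\chi)^{\otimes 3}$, so $\Theta(\pi)$ is a quotient of the triple tensor product.

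The main obstacle is confined to the second step: confirming that the description of $\Pi_\mathscr{U}$ as $\Pi[G]\otimes|a|^{2+r} \oplus \mathbb C \otimes |a|^{2+2r}$ degenerates sensibly when $r=0$, since $G=\SL_2^3$ has no distinguished "minimal representation" in the sense used for the exceptional cases. One likely must revisit the original references to identify the precise $G$-module structure of $\Pi_\mathscr{U}$ for simply connected $D_4$, and verify that whatever its summands are, the torus $T$ still acts on them by the single character $|\cdot|^2$ so that the vanishing argument above remains valid.
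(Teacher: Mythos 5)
Your proposal follows essentially the same route as the paper's proof: embed $\pi$ into $\Ind_B^{\SL_2}(\chi^{-1})$, apply Frobenius reciprocity, use the two-step filtration of $\Pi_U$ from \cite[Theorem 6.1]{MS}, discard the top piece because $\pi\neq\St$ forces $\chi^{-1}\neq|\cdot|$, and identify the bottom piece $C_c(\Omega)$ (equivalently your $\Ind_{P\times T}^{G\times T}(C_c(\GL_1))$ with $P=B^3$) with the dual of the triple tensor product. The one step you explicitly leave open --- that $T$ acts on $\Pi_{\mathscr U}$ by the single character $|a|^2$ even though $\SL_2^3$ has no minimal representation in the exceptional sense --- is precisely what the paper settles by a short exponent computation for the $D_4$ minimal representation (the unnormalized leading exponent $\lambda-\rho=(-1,-1,0,0)$ pairs with $\alpha^{\vee}$ to give $-2$, and the other exponents agree on $\alpha^{\vee}$), so your reduction is sound and correctly identifies the only point requiring verification.
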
 
 \begin{proof} Recall that $B=TU$. Since $\pi$ is a submodule of $\Ind_B^{\SL_2}(\chi^{-1})$, we have 
 \[ 
 \Hom_{\SL_2}(\Pi, \pi) \subseteq \Hom_{\SL_2}(\Pi, \Ind_B^{\SL_2}(\chi^{-1})) \cong \Hom_T(r_B(\Pi), \chi^{-1}). 
 \] 
 Again, we need to describe $\Pi_U$.  Recall that $U$ is the center of the nilpotent radical of the $A_1^3$ maximal 
  parabolic subgroup with the nilpotent radical $\mathscr U$.  Thus we have an exact sequence 
  \[ 
  0\rightarrow C_c(\Omega) \rightarrow \Pi_U \rightarrow \Pi_{\mathscr U} \rightarrow 0 
  \] 
  where $\Omega$ is the minimal orbit in $V$, i.e. the highest weight orbit in the 8-dimensional representation $V_2^{\otimes 3}$. 
  The action of $T$ on $C_c(\Omega)$ is geometric, twisted by $\delta_B$. The action of $T$ 
  on $\Pi_{\mathscr U}$ is by $\delta_B.$ Indeed, using the usual realization of the $D_4$ root system, 
    $U$ is the root space of the highest root $\alpha=e_1+e_2$. We normalize so that the exponent of the trivial representation of $D_4$ is  
    $\rho=(3,2,1,0)$. The leading exponent of $\Pi$ is $\lambda =(2,1,1,0)$. It comes with multiplicity two. The other three exponents are 
    obtained by acting on $\lambda$ by three simple reflections corresponding to simple roots perpendicular to $\alpha$. Notice that this does not 
    affect evaluation on $\alpha^{\vee}$. Unnormalized exponents are obtained by subtracting $\rho;\;\lambda-\rho=(-1,-1,0,0)$. 
    Product with $\alpha^{\vee}$ is -2, i.e. $\alpha^{\vee}(a)$ acts as $|a|^2$, as claimed. Hence  $T$ acts on $r_B(\Pi_{\mathscr U})$ by $\delta_B^{1/2}$.  
  Since we assume that $\pi$ is not the Steinberg representation, then $\chi^{-1} \neq \delta_U^{1/2}$, and in this case 
  the top piece of the filtration can be ignored. Thus we have 
  \[ 
 \Hom_{\SL_2}(\Pi, \pi) \subseteq \Hom_T( \delta_B^{-1/2} C_c(\Omega) , \chi^{-1}). 
 \] 
 As in the proof of Proposition \ref{prop_princ_series_SL2} one sees that 
  \[ 
  \Hom_T( \delta_B^{-1/2} C_c(\Omega) , \chi^{-1}) \cong \Hom_{\mathbb C}(\Ind_B^{\SL_2}(\chi)\otimes \Ind_B^{\SL_2}(\chi)\otimes \Ind_B^{\SL_2}(\chi), \mathbb C). 
 \] 
    \end{proof}

\section{On cuspidal support of the lift}  \label{S:cuspidal_support} 

Let $P=MN$ be the maximal parabolic subgroup in $G$ as in the previous section. 
 Let $\sigma$ be an irreducible supercuspidal representation of $\SL_2$. In this section we shall prove that $\Theta(\sigma)\neq 0$ and 
 that $\Theta(\sigma)_N=0$. This gives us a separation between theta lifts of principal series representations and theta 
 lifts of supercuspidal representation, in terms of their cuspidal supports.

Let $\bar N$ be the radical of the opposite parabolic. Then $\bar N$ can be 
endowed a structure of a cubic Jordan algebra $J$  
(in particular, there is cubic norm $N_J$ and linear trace form $T_J$ on $J$ such that $T_J(1)=3$).
 Then $N\cong J^{\ast}$ as $M$-modules. 
With these identifications, if we fix an additive character $\psi $, an element $x\in J$ defines 
a character $\psi_x$ of $N$, via $\psi_x(n)= \psi(\langle n, x\rangle)$,  where $ \langle n, x \rangle$ is the natural pairing on $J^*\times J$. 

We shall now compute the space of 
twisted coinvariants $\Pi_{N,\psi_x}$ and derive some consequences.  
To that end, we shall make use of  a maximal parabolic $\mathscr P=\mathscr M\mathscr N$ in 
$\mathscr G$ containing $\SL_2 \times P$ such that $ \mathscr M^{\mathrm{der}}=\SL_2 \times M^{\mathrm{der}}$. The radical $\mathscr N$ has a filtration 
\[ 
\mathscr N_3 < \mathscr N_2 < \mathscr N_1 =\mathscr N 
\] 
such that the abelian subquotients, as $\SL_2 \times M^{\mathrm{der}}$-modules, are given by (see Sections 2-4 in  \cite{GS_E7})
\[ 
\begin{cases}  
\mathscr N_1/ \mathscr N_2 \cong V_2 \otimes J  \\
\mathscr N_2/  \mathscr N_3 \cong N \cong  J^{\ast} \\
\mathscr N_3 \cong V_2. 
\end{cases} 
\] 
 The filtration of $\mathscr N$ defines a filtration of the minimal representation 
\[ 
\Pi_3 \subseteq \Pi_2 \subseteq \Pi_1 \subseteq  \Pi 
\] 
 such that $\Pi/\Pi_i =\Pi_{\mathscr N_i}$ for all $i$. The first quotient $\Pi/\Pi_1=\Pi_{\mathscr N}$ is an explicitly known finite length $\mathscr M$-module. 
 For $i\geq 1$, $\Pi_i/\Pi_{i+1}$ is naturally a $\mathscr N_i/\mathscr N_{i+1}$-module, thus it is a sheaf over the Pontrjagin dual
$\widehat{\mathscr N_i/\mathscr N_{i+1}}$ (\cite{BZ_reps_GL}). By minimality of $\Pi$, the sheaf is supported on the minimal $\SL_2\times M$-orbit.  

 \begin{lemma}  Let $N_J$ denote the norm on $J$.  
 Then, for a particular identification $U\cong F$,  for every  $x\in J$ 
 \[ 
 (\Pi_3)_{N,\psi_x}  \cong \ind_U^{\SL_2} \psi_{N_J(x)} 
 \] 
 where $\psi_{N_J(x)} (u) = \psi(N_J(x) u)$ for  all $u\in U\cong F$. 
 \end{lemma}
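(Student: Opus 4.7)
The plan is to reduce the twisted coinvariant computation to a one-dimensional fiber calculation by realizing $\Pi_3$ as a compact induction from $U$ to $\SL_2$. By the minimality of $\Pi$, the $\mathscr N_3$-action on $\Pi_3$ is supported on the nontrivial characters in $\widehat{\mathscr N_3}\cong V_2$. Since $M^{\mathrm{der}}$ acts trivially on $\mathscr N_3$ while $\SL_2$ acts transitively on $V_2\setminus\{0\}$ with stabilizer $U$, I would fix a highest-weight vector $w_0\in V_2$, normalized so that the identification $U\cong F$ is compatible with $\psi_{w_0}$, and identify, as $\SL_2$-modules,
\[
\Pi_3 \;\cong\; \ind_U^{\SL_2}\bigl(\pi_{w_0}\bigr),
\]
where $\pi_{w_0}:=\Pi_{\mathscr N_3,\psi_{w_0}}$ is the $(\mathscr N_3,\psi_{w_0})$-twisted Jacquet module of $\Pi$.

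Next, since $N\subset G$ commutes with $\SL_2$ inside $\mathscr G$, the $N$-action on $\Pi_3$ is $\SL_2$-equivariant and hence the $(N,\psi_x)$-coinvariant functor commutes with $\ind_U^{\SL_2}$:
\[
(\Pi_3)_{N,\psi_x}\;\cong\;\ind_U^{\SL_2}\bigl((\pi_{w_0})_{N,\psi_x}\bigr).
\]
The lemma thus reduces to showing that $(\pi_{w_0})_{N,\psi_x}$ is the one-dimensional $U$-module with character $\psi_{N_J(x)}$.

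To verify this, I would analyze the residual action of $\mathscr N/\mathscr N_3$ on $\pi_{w_0}$. The quotient $\mathscr N/\mathscr N_3$ is two-step nilpotent, with centre $\mathscr N_2/\mathscr N_3\cong J^*$ containing the image of $N$, and abelianization $V_2\otimes J$; after specializing the commutator pairing $(v\otimes j,j^*)\mapsto \langle j,j^*\rangle v\in V_2=\mathscr N_3$ to the character $\psi_{w_0}$ and choosing a polarization compatible with the $w_0$-decomposition of $V_2$, one can realize $\pi_{w_0}$ concretely on $\calS(J)$ with $N\cong J^*$ acting by the Schr\"odinger-type characters. The $(N,\psi_x)$-coinvariants then localize the Schwartz model to the fiber at $x\in J$, leaving a one-dimensional space on which only the residual action of $U$ remains to be computed. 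Here the cubic Jordan norm enters through the explicit $U$-action in the deepest piece of the minimal representation: the $U\subset\SL_2$-action threads through all three layers of the filtration of $\mathscr N$ used in \cite{GS_E7}, producing three successive Jordan evaluations that collapse into the cubic form $N_J$.

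The main obstacle is this last identification --- showing that the residual $U$-action on the localized fiber is exactly $\psi_{N_J(x)}$ rather than some other cubic polynomial in $x$. This requires combining the explicit formulas for $\SL_2$ on the minimal representation at the bottom of the $\mathscr P$-filtration with the defining Jordan-algebraic identities for $N_J$ and the associated sharp map, to check that the only $M$-invariant cubic polynomial on $J$ that can appear is the norm itself.
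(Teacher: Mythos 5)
Your overall architecture matches the paper's: both localize $\Pi_3$ as a sheaf on the nontrivial characters of $\mathscr N_3$, use transitivity of $\SL_2$ with stabilizer $U$ to write $\Pi_3$ as a compact induction of a single fiber $\Pi_{\mathscr N_3,\psi_3}$, realize that fiber as $C_c(J)$ with $N\cong J^*$ acting by the characters $\psi(\langle y,x\rangle)$, and conclude that the $(N,\psi_x)$-coinvariants are one-dimensional. Up to that point the argument is sound and is essentially the paper's.

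The gap is exactly the step you yourself flag as ``the main obstacle'': identifying the residual $U$-action on the one-dimensional fiber as $\psi_{N_J(x)}$. This is the actual content of the lemma, and deferring it leaves the proof incomplete. The paper settles it by working inside the restricted root system of type $G_2$ attached to the parabolic $\mathscr B=\mathscr Q\cap\mathscr P$ normalizing the line $\mathscr Z$ on which $\psi_3$ is trivial: the Chevalley-type commutator
\[
[e_{\alpha}(u), e_{\beta}(x)]=e_{\alpha+\beta}(\pm ux)\, e_{\alpha+ 2\beta}(\pm u x^{\#})\, e_{\alpha+3\beta}(\pm u N_J(x))\, e_{2\alpha+3\beta}(\pm u^2 N_J(x))
\]
exhibits the cubic norm directly in the $e_{\alpha+3\beta}$-component, which is the component the character $\psi_3$ sees, and this immediately gives $(u\cdot f)(x)=\psi(N_J(x)u)f(x)$ on $C_c(J)$. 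Your proposed substitute --- invariance of the character function $x\mapsto c(x)$ under $M^{\mathrm{der}}$ forcing $c$ to be proportional to $N_J$ --- is not yet a proof: you would still need to show that $c$ is a polynomial that is homogeneous of degree $3$ under the torus (this degree is precisely what the $G_2$-grading $\alpha+3\beta$ encodes), and, more seriously, invariant theory cannot rule out $c\equiv 0$. Nonvanishing of the constant is essential, since the subsequent Proposition \ref{P:cuspidal_support} relies on $\Pi_{N,\psi_x}$ being the Gelfand--Graev representation when $N_J(x)\neq 0$; so some form of the explicit commutator computation cannot be avoided.
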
 
 \begin{proof}  Let $\psi_3$ be a nontrivial character of $\mathscr N_3$. We need to describe $\Pi_{\mathscr N_3,\psi_3}$, the fiber of the sheaf 
 corresponding to $\Pi_3$. 
  Since  $\SL_2$ acts transitively on non-trivial characters of $\mathscr N_3$, we can work with any one in particular. 
  So, if $v_1$ and $v_2$ is the  standard basis of $V_2$, that is, 
 $v_1$ is fixed by $U$ and $v_2$ fixed by $\bar U$, we can assume that $\psi_3$ is trivial on the line $\mathscr Z\subset \mathscr N_3$ spanned by $v_1$.

 We shall now describe a restricted root system of type $G_2$ on $\mathscr G$ that is eventually used to do needed computations. 
 The  normalizer of $\mathscr Z$ in $\mathscr G$ is a Heisenberg maximal parabolic subgroup $\mathscr Q=\mathscr L \mathscr H$. 
  Let $\mathscr B =\mathscr Q \cap\mathscr P$. It is a parabolic subgroup of co-rank 2. 
  Its two-dimensional central torus gives the restricted root system of type $G_2$, such that long root spaces are one dimensional and short root spaces are isomorphic to 
   $J$ or $J^*$, as modules for the Levi. The unipotent radical of $\mathscr B$ is $\mathscr U= U \mathscr N$. 
   It corresponds to a choice of positive roots in the $G_2$ root system. 
   If $\alpha$ is the long simple root, and $\beta$ the short simple root, then $U$ is the root group corresponding to $\alpha$, $v_2\otimes J$ is the root group 
   corresponding to $\beta$, $N\cong J^*$ is the root group corresponding to $\alpha+2\beta$ etc. Importance of this decomposition lies in the fact that the Lie bracket on 
${\mathrm {Lie}}( \mathscr U)$ can be described in terms of the Jordan algebra structure (\cite{GS_minimal},\cite{Rumelhart}).

   Observe that $\Pi_{\mathscr N_3, \psi_3}$ is naturally a module for $\mathscr U= U \mathscr N$. We shall now describe it. 
    Let $\chi$ be an extension of $\psi_3$ to $\mathscr H$. Then 
   $\Pi_{\mathscr H, \chi}\neq 0$ (and then it is one dimensional) if $\chi$ belongs to the highest weight orbit $\Omega$ in $\widehat{\mathscr H/\mathscr Z}$. 
    Each such  extension is uniquely determined by its restriction on $N$ \cite[Proposition 11.2]{GS_minimal}.  
    Thus let $\chi_x \in \Omega$ be the extension such that $\chi_x$ restricted to $N$ is $\psi_x$.  
   By the Frobenius reciprocity, we have a  homomorphism 
   \[ 
   \Pi_{\mathscr N_3,\psi_3} \rightarrow \ind_{\mathscr H}^{\mathscr U} (\chi_0) \cong C_c( v_2 \otimes J)\cong C_c(J). 
   \] 
   The composite of this map with the evaluation at $x\in J$ factors through $\Pi_{\mathscr H, \chi_x}$, which is one-dimensional. Thus the above homomorphism is 
   an isomorphism.  
   
Now it is not too difficult to explicate the action of $\mathscr U$ on $\Pi_{\mathscr N_3, \psi_3}$. It amounts to commuting elements in $v_2 \otimes J\cong J$ with 
elements in $\mathscr U$.  If we write elements in the root groups as $e_{\alpha}(u)$ where $u\in F$, $e_{\beta}(x)$ where $x\in J$, 
$e_{\alpha+ 2\beta}(y)$ where $y\in J^*$, etc, than the relevant group commutators in $\mathscr U$ are 
\[ 
[e_{\alpha +2\beta}(y), e_{\beta}(x)]=e_{\alpha+3\beta}(\pm \langle y, x \rangle) 
\] 
\[ 
[e_{\alpha}(u), e_{\beta}(x)]=e_{\alpha+\beta}(\pm ux) e_{\alpha+ 2\beta}(\pm u x^{\#}) e_{\alpha+3\beta}(\pm u N_J(x)) e_{2\alpha+3\beta}(\pm u^2 N_J(x))
\] 
where $x\mapsto x^{\#}$ is a quadratic map from $J$ to $J^*$ such that $\langle x^{\#}, x\rangle =3N_J(x)$ for all $x\in J$.  
Now it is clear that $y\in N\cong J^*$ and $u\in U\cong F$ (after adjusting these isomorphisms by signs, if necessary) 
act on $f\in C_c( J)$  by 
 \[ 
 (y\cdot f)(x)= \psi(\langle y, x\rangle)f(x) \text{ and } (u\cdot f)(x) = \psi(N_J(x) u) f(x), 
 \]
 respectively. 
 Now it is evident that $(\Pi_{\mathscr N_3,\psi_3})_{N,\psi_x}$ is one-dimensional and that $U$ acts on it via the character $\psi_{N_J(x)}$.  
 Thus we have described $(\Pi_3)_{N,\psi_x}$ as a sheaf on $\widehat{\mathscr N}_3$, supported away from 0. Lemma is now an easy consequence. 
 \end{proof} 
 
  Remark:  If $J=F$ with the norm $N_J(x)=x^3$ and the trace $T_J(1)=3$, which we use to identify $J$ and $J^*$, then $x^{\#}=x^2$ and the 
 above commutators reduces to well known commutators in the Chevalley group $G_2$.

 \begin{prop} \label{P:cuspidal_support} 
 Let $\sigma$ be an irreducible supercuspidal representation of $\SL_2$. Then $\Theta(\sigma)\neq 0$ and $\Theta(\sigma)_N=0$. 
 \end{prop}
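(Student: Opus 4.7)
The plan is to handle the two assertions separately, leveraging the Lemma just proved.

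For $\Theta(\sigma)\neq 0$: every irreducible supercuspidal $\sigma$ of $\SL_2(F)$ is $\psi_c$-generic for some $c\in F^{\times}$. Since the cubic norm $N_J:J\to F$ is surjective in each case, we may choose $x\in J$ with $N_J(x)=c$. By the Lemma, $(\Pi_3)_{N,\psi_x}\cong \ind_U^{\SL_2}\psi_c$ as $\SL_2$-modules, and the $\psi_c$-Whittaker functional of $\sigma$ supplies a nonzero $\SL_2$-equivariant map $(\Pi_3)_{N,\psi_x}\to\sigma$. Pre-composing with the $\SL_2$-equivariant surjection $\Pi_3\twoheadrightarrow (\Pi_3)_{N,\psi_x}$ (which is $\SL_2$-equivariant because $\SL_2$ centralizes $N$ and preserves $\psi_x$) produces a nonzero $\Pi_3\to\sigma$. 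Irreducible supercuspidals of $\SL_2(F)$ are injective in the category of smooth representations (their contragredients, being compactly induced from open compact-mod-center subgroups, are projective), so this map extends along the inclusion $\Pi_3\hookrightarrow\Pi$ to a nonzero $\SL_2$-map $\Pi\to\sigma$, giving $\Theta(\sigma)\neq 0$.

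For $\Theta(\sigma)_N=0$: since $\SL_2$ centralizes $N$, we have $\Theta(\sigma)_N\cong (\Pi_N\otimes\sigma^{\vee})_{\SL_2}$, so it suffices to show $\Hom_{\SL_2}(\Pi_N,\sigma)=0$. The subgroup $N$ lies in $\mathscr N_2$ and projects isomorphically onto $\mathscr N_2/\mathscr N_3\cong J^{\ast}$; hence $N$ acts trivially on $\Pi_{\mathscr N_2}$ and on its subobject $\Pi_1/\Pi_2$. The subquotient $\Pi_2/\Pi_3$ is a sheaf on $\widehat{\mathscr N_2/\mathscr N_3}\cong J$ supported on the minimal $M$-orbit, which avoids $0\in J$, so $(\Pi_2/\Pi_3)_N=0$. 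Right-exactness of $(-)_N$ applied to the filtration then yields
\[
(\Pi_3)_N\longrightarrow \Pi_N\longrightarrow \Pi_{\mathscr N_2}\longrightarrow 0.
\]
The Lemma at $x=0$ identifies $(\Pi_3)_N\cong \ind_U^{\SL_2}1_U$, whose $\Hom$ to $\sigma$ equals $\sigma^U$ by Frobenius; and $\sigma^U=0$ for supercuspidal $\sigma$, since a nonzero $U$-fixed vector would give a matrix coefficient that is right-$U$-invariant, hence not compactly supported modulo center.

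It remains to verify $\Hom_{\SL_2}(\Pi_{\mathscr N_2},\sigma)=0$. The filtration $0\to \Pi_1/\Pi_2\to \Pi_{\mathscr N_2}\to \Pi_{\mathscr N}\to 0$ reduces this to the two outer pieces. The Jacquet module $\Pi_{\mathscr N}$ is a finite-length $\mathscr M$-module with explicit structure from the minimal-representation literature \cite{KS_minimal,GS_minimal}; its $\SL_2$-constituents are characters, contributing no supercuspidal. The sheaf $\Pi_1/\Pi_2$ on $\widehat{V_2\otimes J}$ is supported on the minimal $\SL_2\times M$-orbit with one-dimensional fibers, and $U\subset \SL_2$ lies in the stabilizer of each base point. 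The main obstacle is to verify that $U$ acts trivially on the fiber; this should follow from the standard feature of minimal representations that the stabilizer of a generic point of the top $\mathscr N$-layer acts trivially on the fiber, combined with the Jordan-algebra commutator identities recalled in the proof of the Lemma and the vanishing of $N_J$ and of $x\mapsto x^{\#}$ on rank-one (minimal-orbit) elements. Once the triviality of the $U$-action is established, $\Pi_1/\Pi_2$ becomes a direct sum of copies of $\ind_U^{\SL_2}1_U$ tensored with $M$-data, and the same argument $\sigma^U=0$ completes the proof.
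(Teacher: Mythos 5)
Your overall strategy coincides with the paper's: use the Lemma to get $(\Pi_3)_{N,\psi_x}\cong\ind_U^{\SL_2}\psi_{N_J(x)}$ and the genericity of $\sigma$ for the nonvanishing, then kill $\sigma$ layer by layer in the filtration $\Pi_3\subseteq\Pi_2\subseteq\Pi_1\subseteq\Pi$ for the vanishing of $\Theta(\sigma)_N$. Your treatment of $(\Pi_3)_N$, $(\Pi_2/\Pi_3)_N$ and $\Pi_{\mathscr N}$ matches the paper's, and your variant of the nonvanishing argument (extending a nonzero map $\Pi_3\to\sigma$ to $\Pi$ using that supercuspidals of a group with compact center split off) is a legitimate alternative to the paper's direct computation of $\Pi_{N,\psi_x}$ for $N_J(x)\neq 0$. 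One small technical slip: $\Hom_{\SL_2}(\ind_U^{\SL_2}1,\sigma)$ is not $\sigma^U$ by Frobenius reciprocity, since $U$ is closed but not open; the paper instead writes $\ind_U^{\SL_2}(1)\cong\ind_B^{\SL_2}(\ind_U^B(1))$, which is parabolic induction because $\SL_2/B$ is compact, and so has no supercuspidal constituents. Your conclusion is right, but the cited adjunction is the wrong one.

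The genuine gap is the one you yourself flag: the layer $\Pi_1/\Pi_2$. You reduce everything to the claim that $U$ acts trivially on the one-dimensional stalks of the sheaf on $\widehat{\mathscr N_1/\mathscr N_2}\cong V_2^*\otimes J^*$, and then say this "should follow" from properties of minimal representations, without an argument. This is precisely the content of the paper's assertion that $\Pi_1/\Pi_2\cong C_c(\Omega)$ with the \emph{geometric} action of $\SL_2\times M$ (a structural fact about the minimal representation from \cite{MS}), so the step cannot be skipped. Moreover, even granting geometricity, your conclusion that $\Pi_1/\Pi_2$ "becomes a direct sum of copies of $\ind_U^{\SL_2}1_U$" is too quick: $\Omega$ has infinitely many $\SL_2$-orbits, so $C_c(\Omega)$ is not literally such a direct sum. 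The paper handles this by noting that a supercuspidal subquotient would be a submodule, composing with restriction to a single $\SL_2$-orbit $\omega$ on which the composite is nonzero, and identifying $C_c(\omega)\cong\ind_U^{\SL_2}(1)\cong\ind_B^{\SL_2}(\ind_U^B(1))$ via the stabilizer of a base point, yielding the contradiction. With the geometricity of the action supplied and the orbit-by-orbit reduction made precise, your argument closes.
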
 
 \begin{proof} 
 If $N_J(x)\neq 0$ then $ \ind_U^{\SL_2} \psi_{N_J(x)}$ is the Gelfand-Graev representation. In this case it is trivial to see that other 
 subquotients of $\Pi$ do not contribute to $(N,\psi_x)$-coinvariants. Hence 
 $\Pi_{N,\psi_x}\cong  \ind_U^{\SL_2} \psi_{N_J(x)}$. Thus 
 \[ 
 \Theta(\sigma)_{N,\psi_x}\cong ( \ind_U^{\SL_2} \psi_{N_J(x)}\otimes \sigma^{\vee})_{\SL_2}. 
 \] 
 Since $\sigma$ is generic for some choice of Whittaker character, the above implies that 
 $\Theta(\sigma)\neq 0$. 
 
 For the second part, we shall prove that the space of $N$-coinvariants, for every subquotient $\Pi_i/\Pi_{i+1}$, does not contain $\sigma$. 
  We go from the bottom to top, starting with $\Pi_3$.  By the lemma and transitivity of induction, 
 \[ 
(\Pi_3)_N \cong \ind_U^{\SL_2} (1) \cong \ind_B^{\SL_2}(\ind_U^B (1)). 
\] 
Since $\SL_2/B$ is compact, the induction from $B$ to $\SL_2$ with compact support is the same as parabolic induction. 
Hence $\ind_U^{\SL_2} (1)$ is a parabolically induced module, so it cannot contain 
$\sigma$. 

For the second subquotient, observe that $N\cong \mathscr N_2 /  \mathscr N_3$. Since $\Pi_2/\Pi_3$ is a sheaf on a set of 
non-trivial characters of $\widehat{\mathscr N_2/\mathscr N_3}$, it follows that $(\Pi_2/\Pi_3)_{N}=0$.  For the third subquotient $\Pi_1/\Pi_2$, 
observe that the group $N\cong \mathscr N_2/  \mathscr N_3$ already acts trivially on it. 
This quotient is a sheaf over $\widehat{\mathscr N_1/\mathscr N_2} \cong V_2^*\otimes J^*$. 
By minimality of $\Pi$, that sheaf is supported on the  $\SL_2 \times M$-orbit  $\Omega$ consisting of highest weight vectors in $V_2^*\otimes J^*$, with 
one dimensional stalks. Hence 
\[ 
\Pi_1/\Pi_2\cong C_c(\Omega) 
\] 
where  the action of $\SL_2 \times M$ on $C_c(\Omega)$ is geometric. 
   Assume that $\sigma$ appears as a subquotient. 
   Since $\sigma$ is supercuspidal, it is also a submodule of $C_c(\Omega)$. Let $\omega\subset \Omega$ be any $\SL_2$-orbit. 
   Consider the composite 
   \[ 
   \sigma \rightarrow C_c(\Omega)\rightarrow C_c(\omega). 
   \] 
   Clearly there exists an $\SL_2$-orbit $\omega$ such that the composite map is non-trivial. 
   Since elements in $\Omega$ are highest weight vectors, there exists point in $\omega$ whose stabilizer in $\SL_2$ is the (usual) unipotent group $U$. 
   Thus we have, by transitivity of induction, 
   \[ 
   C_c(\omega)\cong \ind_U^{\SL_2}(1)\cong  \ind_B^{\SL_2}(\ind_U^B(1)). 
   \] 
Again, a contradiction. It remains to deal with the top piece, $\Pi_{\mathscr N}$. This is a finite length $\mathscr M$-module generated by Iwahori 
fixed vectors.  Thus $\sigma$ does not appear here, either.

 \end{proof}

 \section{Classical theta for $\SL_2 \times \Spin_{2n}$}  \label{S:classical} 
 
 In this section we derive some results for classical theta correspondences that we shall need later. 

 We want to describe the classical theta correspondence for the pair $\SL_2 \times \Spin_{2n}.$ To that end, we employ the Fourier-Jacobi functor (\cite{Berndt_Schmidt_Jacobi},\cite{Weissman_FJ_small}).
 If there are several FJ--functors around -with respect to different groups, we add a subscript.  In our setting, FJ-functor is an exact functor which sends a smooth representation of a group to a smooth representation of the next group in the appropriate FJ-tower (\cite{HS_Jordan_Eisenstein}). One of the most important properties is that it annuls only the trivial representation (e.g.~ \cite{Weissman_FJ_small} ). Assume that  $2n\geq 6$, so $\Spin_{2n}$ has a Heisenberg parabolic with Levi subgroup isomorphic to $\SL_2\times \Spin_{2n-4}.$  
Let $FJ_{\Spin}$ be the Fourier-Jacobi with respect to $\Spin_{2n}$. The output is a representation of $\SL_2\times \Spin_{2n-4}$. 

\begin{lemma}
\label{lem_FJ_of_weil_4n}
The $FJ_{\Spin}$ functor, applied to Weil representation for the dual pair $\SL_2 \times \Spin_{2n}$ for $n\ge 3,$ 
gives the regular representation of $\SL_2$ and the action of $\Spin_{2n-4}$ is trivial.
\end{lemma}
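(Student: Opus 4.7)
The plan is to realize the Weil representation in its Schr\"odinger model $\omega \cong \rS(V)$ for the split $2n$-dimensional quadratic space $V$, and then compute $FJ_{\Spin}(\omega)$ in two stages reflecting the two-step structure of $N_H$. Fix a $2$-dimensional totally isotropic subspace $W \subset V$, giving a decomposition $V = W \oplus U \oplus W^{\ast}$ with $U = (W \oplus W^\ast)^\perp$ of dimension $2n-4$. The Heisenberg parabolic $P_H = L_H N_H$ is the stabilizer of $W$ in $\Spin(V)$; its derived Levi is $\SL(W) \times \Spin(U) \cong \SL_2 \times \Spin_{2n-4}$, and $N_H$ has one-dimensional centre $Z = \wedge^2 W$ with abelian symplectic quotient $\bar N := N_H/Z \cong W\otimes U$.

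I would first compute the $(Z,\psi)$-twisted Jacquet module for a nontrivial character $\psi$ of $Z$. The centre $Z$ translates the $W$-coordinate $y$ of $v=(y,u,x) \in V$ in the direction $\iota(x) \in W$; a Fourier transform in $y$ (with dual variable $\eta \in W^\ast$) identifies $\omega_{Z,\psi}$ with Schwartz functions supported on the hypersurface $\{(\eta,u,x) : \omega_W(\eta,x) = \lambda\} \subset W^\ast \oplus U \oplus W^\ast$, where $\lambda \in F^\times$ is determined by $\psi$. Packaging $(\eta,x)$ into the $2 \times 2$ matrix $g = (\eta \mid x)$, the defining equation reads $\det g = \lambda$, so the hypersurface is $\SL_2 \times U$ and
\[
\omega_{Z,\psi} \;\cong\; \rS(\SL_2) \otimes \rS(U).
\]

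In the second stage I apply Stone--von Neumann to the induced action of $N_H$ on this module. At each fixed $g \in \SL_2$ the action of $\bar N = W \otimes U$ on the fibre $\rS(U)$ realizes the unique Heisenberg representation with central character $\psi$, as the Schr\"odinger model on the Lagrangian $e_1 \otimes U \subset W \otimes U$. Thus $\omega_{Z,\psi}$ decomposes $L_H$-equivariantly as (Heisenberg representation) $\otimes$ (multiplicity space), where the multiplicity space is $\Hom_{N_H}(\rS(U),\omega_{Z,\psi}) \cong \rS(\SL_2)$; by construction of the functor, $FJ_{\Spin}(\omega)$ is this multiplicity space. The Levi action on it splits as follows: $\SL(W)$ acts on $\rS(\SL_2)$ by left translation $F(g) \mapsto F(h^T g)$, tracked through how $\SL(W)$ transforms the Fourier variables $\eta,x \in W^\ast$, while $\Spin(U)$ acts trivially, its entire action having taken place on the divided-out $\rS(U)$-factor through the dual-pair Weil correspondence $\SL(W) \times \rO(U) \subset \mathrm{Sp}(W \otimes U)$. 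Hence $FJ_{\Spin}(\omega)$ is the regular representation of $\SL_2$ with $\Spin_{2n-4}$ trivial, as claimed.

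The main obstacle is to carry out the Stone--von Neumann step $L_H$-equivariantly and with the correct metaplectic cocycle, so that the Weil actions of $\SL(W)$ and $\Spin(U)$ on the $\rS(U)$-factor truly pair up and are completely absorbed when the Heisenberg representation is divided out, leaving no residual scalar factor on the $\rS(\SL_2)$-piece.
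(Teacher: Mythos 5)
Your overall strategy coincides with the paper's: compute the $(Z,\psi)$-twisted Jacquet module of the Weil representation, recognize it as $S(\SL_2)\otimes(\text{Heisenberg module for }N_H/Z)$, and divide out the Heisenberg representation via Stone--von Neumann to get the multiplicity space $S(\SL_2)$, with the $\Spin_{2n-4}$-action cancelling between the two tensor factors. The difference is the model. The paper works in the mixed model $S(X\otimes W_{\mathrm{sympl}})\cong S(W_{\mathrm{sympl}}^{\,n})$, where $X$ is a Lagrangian of the quadratic space: there the centre acts by multiplication by characters (so the twisted Jacquet module is a restriction to a level set, via Lemma 2.2 of [MS]) and, crucially, the dual-pair $\SL_2$ acts \emph{geometrically} by diagonal translation. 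You work in $S(V)$, where $\SO(V)$ is geometric and a partial Fourier transform in the $W$-coordinate is needed first; your stage one and your identification of the fibre $S(U)$ as the Schr\"odinger model of the Heisenberg representation of $W\otimes U$ are correct.

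The gap is at the end: you track the actions of the Levi factors $\SL(W)$ and $\Spin(U)$ on the multiplicity space, but never the action of the dual-pair member $\SL_2$, and then conclude that the output is ``the regular representation of $\SL_2$.'' In your model that $\SL_2$ acts through the Weil-representation formulas on $S(V)$ (multiplication by $\psi(uq(v))$, scaling, Fourier transform on $V$), and it is a genuine computation --- essentially the $\SL_2\times\rO_{2,2}$ theta correspondence of [MS, Lemma 2.2] --- to see that, after your partial Fourier transform and restriction to $\{\det(\eta\mid x)=\lambda\}$, it becomes translation on the other side of $S(\SL_2)$. This biregular structure of $S(\SL_2)$ under (dual-pair $\SL_2$) $\times$ (Levi $\SL_2$) is precisely what is used downstream (Corollary \ref{cor_FJSpin2n} and the surjection $S(\SL_2)\twoheadrightarrow FJ_{\Spin_{2n}}(\pi)\otimes\Theta_\psi(\pi)$ in the proof of Lemma \ref{lem_classical_lift_general_case}), so it cannot be left implicit; your worry about the metaplectic cocycle in the Stone--von Neumann step is real but secondary to this. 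This is the one place where the paper's choice of model pays off: the dual-pair $\SL_2$ is geometric from the outset there, at the modest cost of having to quote Kudla's formulas for the unipotent action of the orthogonal group.
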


\begin{proof}
Let $\omega_{\psi}$ be the Weil representation   for the dual pair $\SL_2 \times \Spin_{2n}.$ As before, we denote the underlying irreducible representation of the corresponding Heisenberg group in the same way. Sometimes, if there are several Heisenberg groups around, we add a dimension of the corresponding symplectic space to the notation, i.e. $\omega_{\psi}^{2n}.$  Note that, by \cite[Proposition 2]{KS_minimal}, we have
\[\SL_2 \times \Spin_{2n}\to \syp(4n),\]
since, as explained, this group can be viewed as a Levi subgroup of the Heisenberg parabolic subgroup of $\Spin_{2n+4}.$ We can use explicit formula for the action of the dual pair $\SL_2 \times \rO_{2n},$ as given in \cite{Kudla1}, but using "the opposite" polarisation to one there. Namely, if $\SL_2$ acts on the symplectic space $W$ and $V=X\oplus Y$ is a complete polarisation of the split quadratic space of dimension $2n,$ we realize $\omega_{\psi}$ on the Schwarz space $S(X\otimes W)\cong S(W^n).$ We want to determine $(\omega_{\psi})_{U,\psi},$ where, as previously, $U$ is the root subgroup of the highest root  $\beta$ of $\Spin_{2n}.$ Thus, $(\omega_{\psi})_{U,\psi}$ is $SL_2\times SL_2 \times \Spin_{2n-4}$--module. To calculate it, we can consider $U$ as a root space belonging to $O_{2n}$-this makes sense after taking the group of $F$-points and noting that  $\Spin_{2n}$ acts on $W\otimes V$ by conjugation-as a part of the appropriate Levi subgroup.

\smallskip
Since $U$ belongs to the unipotent radical of the Siegel parabolic subgroup, in $(\omega_{\psi},S(W^n))$ it acts by homotety, cf. Chapter II, Section 4 of \cite{Kudla1}.  In that case, one can compute $(\omega_{\psi})_{U,\psi},$ as in Lemma 2.2 of \cite{MS} to get
\[S(W^n)_{U,\psi}\cong S(W^{n-2}\times \SL_2)\cong S(W^{n-2})\otimes S(\SL_2). \]
Note that the action of the first $SL_2$ (a member of a dual pair) in this realization is purely geometrical (by the left translation), and the second $SL_2$ (part of the Levi subgroup of $\Spin_{2n}$) also acts geometrically-by the right translation,  as a part of the corresponding  Siegel parabolic subgroup (again Chapter II, Section 4 of \cite{Kudla1}). This means that we have have the regular representation of $\SL_2\times \SL_2$ on $S(\SL_2).$  On the other hand, on $S(W^{n-2})$ we have the usual Weil representation of a dual pair $\SL_2\times \rO_{2n-4},$ i.e.   $\SL_2\times \Spin_{2n-4},$ as can be seen from the appropriate explicit formulas (we just removed two  coordinate hyperbolic planes from the quadratic space).  So, $S(W^{n-2})$ carries an irreducible representation $\omega_{\psi}^{4n-8}.$ The group $\rO_{2n-4}$ acts on $S(\SL_2)$ by a character given in Proposition 4.3 of \cite{Kudla1}, so $\Spin_{2n-4}$ acts trivially on it. Let  $\mathscr U$ be the Heisenberg parabolic subgroup of $\Spin_{2n}.$  We have
\begin{align*}
&FJ(\omega_{\psi})=FJ(\omega_{\psi}^{4n})=\Hom_{\mathscr U}(\omega_{\psi}^{4n-8}, (\omega_{\psi}^{4n})_{U,\psi})=\Hom_{\mathscr U}(\omega_{\psi}^{4n-8}, S(W^{n-2})\otimes S(\SL_2))\cong\\
&\Hom_{\bbC}(\omega_{\psi}^{4n-8},\omega_{\psi}^{4n-8})\otimes S(\SL_2).
\end{align*}
\end{proof}

\begin{cor}
\label{cor_FJSpin2n}
 If $\sigma$ is an irreducible representation of 
$\SL_2$ and $\Theta_{\psi}(\sigma)$ the big theta lift to $\Spin_{2n}$, then $FJ_{\Spin_{2n}}(\Theta_{\psi}(\sigma))$ is an irreducible representation of $\SL_2\times\Spin_{2n-4}$, 
$\sigma$ on $\SL_2$, and trivial on the $\Spin_{2n-4}$-- factor. 
 \end{cor}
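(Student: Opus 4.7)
The plan is to combine exactness of the Fourier--Jacobi functor with the explicit description of $FJ_{\Spin_{2n}}(\omega_\psi)$ supplied by Lemma~\ref{lem_FJ_of_weil_4n}. The key observation is that the dual-pair $\SL_2$ centralizes the Heisenberg parabolic of $\Spin_{2n}$ used to define $FJ_{\Spin_{2n}}$, so its action passes transparently through the twisted Jacquet and $\Hom$ operations in the construction of the functor. Exactness of $FJ_{\Spin_{2n}}$ then lets us commute it with the $\SL_2$-coinvariants: applying it to the tautological surjection $\omega_\psi \otimes \sigma^\vee \twoheadrightarrow \Theta_\psi(\sigma)$ and using that $\SL_2$ acts trivially on $\Theta_\psi(\sigma)$ (and hence on its image under $FJ_{\Spin_{2n}}$), one obtains
\[
FJ_{\Spin_{2n}}(\Theta_\psi(\sigma)) \;\cong\; \bigl(FJ_{\Spin_{2n}}(\omega_\psi)\otimes\sigma^\vee\bigr)_{\SL_2}.
\]

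Substituting the identification from Lemma~\ref{lem_FJ_of_weil_4n}, the right-hand side becomes $(S(\SL_2)\otimes \sigma^\vee)_{\SL_2^{\mathrm{dp}}}$, with the dual-pair $\SL_2^{\mathrm{dp}}$ acting diagonally by left translation on $S(\SL_2)$ and by the given action on $\sigma^\vee$, the Levi $\SL_2^{\mathrm{Lv}}$ acting by right translation on $S(\SL_2)$ and trivially on $\sigma^\vee$, and $\Spin_{2n-4}$ acting trivially throughout. The averaging map
\[
f\otimes v^* \;\longmapsto\; \int_{\SL_2} f(g)\,(g^{-1}v^*)\,dg
\]
is $\SL_2^{\mathrm{dp}}$-invariant, intertwines right translation on $S(\SL_2)$ with the standard action on $\sigma^\vee$, and via an approximate-identity argument descends to an isomorphism of $\SL_2^{\mathrm{Lv}}$-modules from the coinvariants onto $\sigma^\vee$. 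Since for $\SL_2$ the MVW involution ${}^tg^{-1}$ is inner (by the Weyl element $w$), every irreducible representation of $\SL_2$ is self-contragredient, so $\sigma^\vee\cong\sigma$ as an $\SL_2^{\mathrm{Lv}}$-module.

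Irreducibility of the output then follows from irreducibility of $\sigma$, and the triviality on the $\Spin_{2n-4}$-factor is preserved throughout. The principal technical point is the commutation in the first step: it rests on verifying that each piece of $FJ_{\Spin_{2n}}$ (the twisted Jacquet along the center of the Heisenberg radical, followed by $\Hom$ against the Heisenberg Weil representation) is $\SL_2^{\mathrm{dp}}$-equivariant. This in turn is simply the statement that $\SL_2^{\mathrm{dp}}$ centralizes the Heisenberg parabolic, which is built into the dual-pair structure. Once that bookkeeping is in place, the remainder of the argument is a routine computation with the regular representation of $\SL_2$.
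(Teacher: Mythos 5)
Your overall strategy is exactly the paper's: the corollary is intended to follow from Lemma \ref{lem_FJ_of_weil_4n} by commuting the exact functor $FJ_{\Spin_{2n}}$ past the $\SL_2$-coinvariants defining $\Theta_\psi(\sigma)$ (compare the one-line proof of the analogous Corollary \ref{C:fj}), and your identification of $(S(\SL_2)\otimes\sigma^\vee)_{\SL_2^{\mathrm{dp}}}$ with an irreducible representation of the Levi copy of $\SL_2$ acting by right translation, trivial on $\Spin_{2n-4}$, is the standard Peter--Weyl computation. All of that is sound.

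The flaw is your last step. It is false that every irreducible representation of $\SL_2(F)$ is self-contragredient, and the argument you give conflates two different statements. The theorem $\pi^\vee\cong\pi\circ({}^tg^{-1})$ is Gelfand--Kazhdan for $\GL_n$, not for $\SL_n$; the fact that $g\mapsto{}^tg^{-1}$ is inner on $\SL_2$ (conjugation by $w$) therefore yields nothing about contragredients of $\SL_2$-representations. The correct MVW-type statement for $\SL_2=\Sp_2$ is $\sigma^\vee\cong\sigma^{c}$ with $c$ conjugation by $\diag(1,-1)\in\GL_2(F)$, which is outer on $\SL_2(F)$ unless $-1\in(F^\times)^2$; concretely, for the constituents $\pi_\chi^{\pm}$ of a principal series induced from the quadratic character attached to $E/F$ one has $(\pi_\chi^{+})^\vee\cong\pi_\chi^{-}$ whenever $-1\notin N_{E/F}(E^\times)$, and similar failures occur in supercuspidal packets. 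So your computation, as set up (dual-pair $\SL_2$ by left translation), honestly produces $\sigma^\vee$ rather than $\sigma$, and the corollary should be read up to this contragredient/left--right convention. This costs nothing downstream---every application in the paper (detecting the trivial representation via $FJ$, bounding lengths, injectivity of the lift) needs only irreducibility and the determination of the $\SL_2$-factor up to contragredient---but you should either state the conclusion as $\sigma^\vee$ or track the conventions carefully, not invoke a blanket self-duality that does not hold.
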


\vskip 15pt 

We  are  able to now completely describe the theta correspondence in question. In order to do that, we note the following. The group $\Spin_{2n}$ has a standard parabolic subgroup $P=MN$ with $N$ commutative and $M$ of semisimple type $D_{n-1}.$ There is a rational character $\omega$ of $M/[M,M]$ described in the second section of \cite{HS_Jordan_Eisenstein}; we can compose it with the  character $\chi_u|\cdot|^s,$ where $\chi_u$ is a unitary character of  $F^*$ and $s$ is real.  Let 
\[ 
I(s,\chi_u)=\Ind_{P}^{\Spin_{2n}}(\chi_u|\cdot|^s\circ \omega), 
\] 
the normalized induction.  Note that $I(s,\chi_u)$ is reducible if and only if $\chi_u^2=1,\chi_u\neq 1$ and $s=0$ or $\chi_u=1$ and $s\in\{\pm 1,\pm (n-1)\};$ in the case of reducibility, the length of the representation is two (cf.~Theorem 3.2 of \cite{HS_Jordan_Eisenstein}). If $\chi_u=1$ the unique irreducible quotient of $I(1,1)$ is the minimal representation and of $I(n-1,1)$ the trivial representation. We denote $J(s,\chi_u)=\Ind_B^{\SL_2}\chi_u|\cdot|^s.$

\begin{lemma}
\label{lem_classical_lift_general_case}
Let $\sigma$ be an irreducible representation of $\SL_2.$ Then its full  lift $\Theta_{\psi}(\sigma)$ to $\Spin_{2n}$ is irreducible except when $\sigma=J(n-1,1).$ In more detail, the following holds:
\begin{enumerate}
\item $\Theta_{\psi}(J(s,\chi_u))=I(s,\chi_u)$
\item $\Theta_{\psi}(1_{\SL_2})$ is the unique  irreducible quotient of $I(1,1),$ i.e.~the minimal representation of $\Spin_{2n}.$
\item $\Theta_{\psi}(\St_{\SL_2})$ is the unique irreducible subrepresentation of $I(1,1).$
\item  $\Theta_{\psi}(\pi_{\chi}^+)\oplus \Theta_{\psi}(\pi_{\chi}^-)=I(0,\chi_u);$ here
$\pi_{\chi}^{\pm}$ are irreducible tempered representations of $\SL_2$ described in  
Section \ref{principal_series_SL}.
\item If $\sigma$ is supercuspidal, the full lift is an  irreducible representation whose Langlands data is easier to explain when viewed as a lift to $\rO_{2n},$ cf.~Theorem 6.7 of \cite{BH_theta}.
	\end{enumerate}
If $\pi$ is an irreducible representation of $\Spin_{2n}$ which has a non-zero lift to $\SL_2$--the list of those follows from the previous claims in this lemma-then its full lift $\Theta_{\psi}(\pi)$ is an irreducible representation of $\SL_2.$	
\end{lemma}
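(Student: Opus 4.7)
The plan combines three tools: a computation of the unipotent Jacquet module $(\omega_\psi)_U$ for the classical Weil representation via the mixed model of \cite{Kudla1}, a Frobenius reciprocity argument parallel to Proposition~\ref{prop_princ_series_SL2}, and the Fourier--Jacobi functor from Corollary~\ref{cor_FJSpin2n} to pin down irreducibility and length.

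First, realizing $\omega_\psi$ on $S(W^n)$, one extracts a two-step filtration of $(\omega_\psi)_U$: the quotient is a lower-rank Weil representation tensored with appropriate characters of the torus $T\subset \SL_2$, and the bottom piece is compactly induced from the stabilizer (within $\SL_2\times\Spin_{2n}$) of the highest weight line for the Siegel parabolic $P=MN$. Normalizing and applying Frobenius reciprocity, any irreducible $\sigma$ appearing as a quotient of $J(s,\chi_u)$ satisfies: $\Theta_\psi(\sigma)$ is a quotient of $I(s,\chi_u)$. When both $J(s,\chi_u)$ and $I(s,\chi_u)$ are irreducible, non-vanishing of the theta lift forces equality, proving~(1).

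For the remaining cases I would use Corollary~\ref{cor_FJSpin2n}: since $FJ_{\Spin_{2n}}(\Theta_\psi(\sigma)) \cong \sigma\boxtimes 1_{\Spin_{2n-4}}$ is irreducible whenever $\sigma$ is, and $FJ_{\Spin_{2n}}$ is exact with kernel only the trivial representation, any composition series of $\Theta_\psi(\sigma)$ has a unique non-trivial factor. Combined with the upper bound and the known length-$2$ reducibility of $I(s,\chi_u)$ at its reducibility points (\cite{HS_Jordan_Eisenstein}), this pins down cases (2)--(4). For (2) and (3), the short exact sequence $0\to\St\to J(1,1)\to 1_{\SL_2}\to 0$ and right-exactness of big theta give $\Theta_\psi(J(1,1))\twoheadrightarrow\Theta_\psi(\St_{\SL_2})$, while the Jacquet module calculation shows $\Theta_\psi(1_{\SL_2})$ is a quotient of $I(1,1)$; matching with the length-$2$ structure of $I(1,1)$ (whose unique irreducible quotient is the minimal representation) identifies the two lifts. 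For (4), $J(0,\chi_u)=\pi_\chi^+\oplus\pi_\chi^-$ is semisimple, and the two summands lift to the two distinct summands of $I(0,\chi_u)$ by non-vanishing combined with the upper bound. Item (5) is invoked from \cite[Theorem~6.7]{BH_theta}.

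The final claim on lifts back to $\SL_2$ follows case by case: every irreducible $\pi$ with a non-zero lift appears in the list above, and for each $\pi=\Theta_\psi(\sigma)$ the Howe-type duality established through the FJ functor forces $\Theta_\psi(\pi)=\sigma$, which is irreducible. The main obstacle I anticipate is part (4): verifying that $\pi_\chi^+$ and $\pi_\chi^-$ lift to distinct non-zero summands of $I(0,\chi_u)$. This can be handled either by distinguishing the summands inside $(\omega_\psi)_U$ via the $T$-action (using the quadratic character $\chi_u$), or by applying $FJ_{\Spin_{2n}}$ to each summand separately and tracking the descent to a lower-rank pair where the correspondence is already understood. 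The exception $\sigma=J(n-1,1)$ is then visible: $I(n-1,1)$ is reducible with the trivial representation of $\Spin_{2n}$ as its unique irreducible quotient, so the full lift is reducible in that single case.
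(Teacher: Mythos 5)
Your overall architecture matches the paper's: bound $\Theta_{\psi}(\sigma)$ by the degenerate principal series $I(s,\chi_u)$ via the Jacquet-module/Kudla-filtration computation, then use Corollary \ref{cor_FJSpin2n} (exactness of $FJ_{\Spin_{2n}}$ with kernel only the trivial representation) to force length $\leq 1$ plus possible trivial constituents, and finally match against the known length-two structure of $I(s,\chi_u)$. Where you genuinely diverge is in how the trivial subquotient is excluded in cases (2), (3) and (5): the paper passes to the theta lift to $\rO_{2n}$ and invokes known results there (the Kepler representation is irreducible and not a character; the lift of $\St$ cannot contain a character by cuspidal support, citing Mui\'c; Theorem 6.7 of \cite{BH_theta} for supercuspidals), whereas you exploit that the constituents of $I(\pm 1,1)$ and $I(0,\chi_u)$ are never trivial (the trivial representation occurs only at $s=\pm(n-1)$). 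For (2)--(4) your route is arguably cleaner and avoids the comparison between $\rO_{2n}$ and $\Spin_{2n}$ lifts; for (5), however, there is no containment in a degenerate principal series, so ``invoked from \cite{BH_theta}'' hides exactly the step the paper spells out --- one must still argue that the irreducible non-character $\rO_{2n}$-lift rules out a trivial $\Spin_{2n}$-subquotient before $FJ$ gives irreducibility. The converse direction (lifts back from $\Spin_{2n}$) is handled in the paper by the explicit surjection $S(\SL_2)\twoheadrightarrow FJ_{\Spin_{2n}}(\pi)\otimes\xi$ coming from Lemma \ref{lem_FJ_of_weil_4n}, plus a separate cuspidal-support argument for $\pi=1_{\Spin_{2n}}$; your appeal to ``Howe-type duality through the FJ functor'' is the same idea but should be made explicit, including the trivial-representation case which the duality argument does not cover.

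Two smaller points. First, your detour through $\Theta_{\psi}(J(1,1))\twoheadrightarrow\Theta_{\psi}(\St)$ is unnecessary and shakier than what your own first paragraph already provides: $\St$ is a quotient of $J(-1,1)$, so $\Theta_{\psi}(\St)$ is directly a quotient of $I(-1,1)$, whose unique irreducible quotient is the unique irreducible subrepresentation of $I(1,1)$. (Controlling $\Theta_{\psi}$ of the reducible module $J(1,1)$ at a reducibility point is exactly where the filtration computation becomes delicate.) Second, for the exceptional case you assert that the full lift of $J(n-1,1)$ ``is reducible,'' but a priori it could equal the irreducible quotient $1_{\Spin_{2n}}$ of $I(n-1,1)$; you need to apply Corollary \ref{cor_FJSpin2n} here explicitly, since $FJ_{\Spin_{2n}}(1_{\Spin_{2n}})=0$ while $FJ_{\Spin_{2n}}(\Theta_{\psi}(J(n-1,1)))\cong J(n-1,1)\neq 0$, which forces $\Theta_{\psi}(J(n-1,1))=I(n-1,1)$. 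This is precisely the paper's argument and the only way the equality in item (1) is obtained at $s=n-1$.
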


\begin{proof}First we cover the case of the principal series representations of $\SL_2.$
 
\smallskip

First assume that $\sigma=J(s,\chi_u)$ is irreducible. Then, $\Theta_{\psi}(\sigma)=I(s,\chi_u).$ Indeed, using standard techniques of Kudla's filtration (e.g.~Corollary 5.3. of \cite{Atobe_Gan_theta_Temp}) together with Corollary \ref{cor_FJSpin2n}, we get that 
$\Theta_{\psi}(\sigma)=\Theta_{\psi}(J(s,\chi_u))$ is a non-zero quotient of $I(s,\chi_u)$ and the latter representation is irreducible in all the cases in which $J(s,\chi_u)$ is irreducible except in the case $s=\pm (n-1)$ and $\chi_u=1.$ Then, we get that the small theta lift of $J(n-1,1)$ is the trivial representation of $\Spin_{2n}.$ But, in that case, if we would have $\Theta_{\psi}(J(n-1,1))=\theta_{\psi}(J(n-1,\chi_u))=1_{\Spin_{2n}},$ we would get contradiction with Corollary \ref{cor_FJSpin2n}, because $FJ_{\Spin_{2n}}(1_{\Spin_{2n}})=0.$ Thus,  $\Theta_{\psi}(J(n-1,1))=I(n-1,1).$

\smallskip

The relation $\Theta_{\psi}(\pi_{\chi}^+)\oplus \Theta_{\psi}(\pi_{\chi}^-)=I(0,\chi_u)$  follows immediately as in the previous discussion.

\smallskip

Analogously, if $\sigma=1_{\SL_2},$ its  full theta lift to $\rO_{2n}$ is an irreducible representation (the Kepler representation), which is not a character, so that  the lift $\Theta_{\psi}(1_{\SL_2})$ to $\Spin_{2n}$ does not have the trivial subquotient. Then, Corollary \ref{cor_FJSpin2n} guarantees that $\Theta_{\psi}(1_{\SL_2})$ is an irreducible quotient of $I(1,1),$ thus the minimal representation. In the same way, $\Theta_{\psi}(\St_{\SL_2})$ to $\Spin_{2n}$ is irreducible, because its full lift to $\rO_{2n}$ does not have a subquotient which is a character, because of its cuspidal support (cf.~\cite{Muic}), so that, again, the lift $\Theta_{\psi}(\St_{\SL_2})$ to $\Spin_{2n}$ does not have the trivial subquotient. This means that $\Theta_{\psi}(\St_{\SL_2})$ is the unique subrepresentation of $I(1,1).$
{}
\smallskip

If $\sigma$ is a cuspidal representation, its full lift to $\rO_{2n}$ is irreducible and is not a character (we have an explicit description of it in terms of the Langlands data, cf.~\cite{BH_theta}); thus $\Theta_{\psi}(\sigma)$ does not have the trivial subquotient and so is irreducible.

\smallskip
 Now, we want  to prove the opposite direction; namely, if $\pi$ is an irreducible representation of $\Spin_{2n}$ such that $\Theta_{\psi}(\pi),$ its theta lift to $\SL_2$ is non-zero, then $\Theta_{\psi}(\pi)$ is irreducible. 

 First, assume that $\pi\neq 1_{\Spin_{2n}}$ is an irreducible representation of $\Spin_{2n}$ which appears in theta correspondence with $\SL_2$ so that
 \[\omega_{\psi}^{4n}\twoheadrightarrow \pi\otimes \Theta_{\psi}(\pi),\]
 where $\Theta_{\psi}(\pi)$ is a non-zero representation of $\SL_2.$ Since $\Theta_{\psi}(\pi)$ is of finite length, there exists an irreducible quotient   $\xi$, so that, by Lemma \ref{lem_FJ_of_weil_4n},
 \[S(SL_2)\twoheadrightarrow FJ_{\Spin_{2n}}(\pi)\otimes \Theta_{\psi}(\pi)\twoheadrightarrow FJ_{\Spin_{2n}}(\pi)\otimes \xi;\]
 this means $FJ_{\Spin_{2n}}(\pi)=\xi$ and, in, turn, $\Theta_{\psi}(\pi)=\xi.$ Thus, $\Theta_{\psi}(\pi)$ is irreducible.
 
 \smallskip
 If $\pi=1_{\Spin_{2n}}$, then we know that it occurs in the theta correspondence with $\SL_2,$ $\theta_{\psi}(1_{\Spin_{2n}})=J(n-1,1).$ On the other hand, every other subquotient of $\Theta_{\psi}(1_{\Spin_{2n}})$ would have the same cuspidal support as 
$\theta_{\psi}(1_{\Spin_{2n}}),$ but $J(n-1,1)$ is irreducible; thus  $\Theta_{\psi}(1_{\Spin_{2n}})=J(n-1,1).$

\end{proof}

\bigskip

\section{Exceptional correspondence}  \label{S:exceptional} 

We return to the situation where $\Pi$ is the minimal representation of $\mathscr G$, and study its restriction to the dual pair $\SL_2\times G$
using the Fourier-Jacobi functor. In particular, is $\sigma$ is an irreducible supercuspidal representation of $\SL_2$, 
we shall prove that $\Theta(\sigma)$ is non-zero and irreducible.  

\vskip 5pt

   Let $Q=LH$ be the Heisenberg parabolic subgroup of $G$. The cases are given by the following table: 
  \[ 
 \begin{array}{c||c|c|c} 
 \mathscr G &  E_6 & E_7 & E_8  \\ \hline 
G &  {\SL_6} & \Spin_{12} & E_7  \\ \hline 
L^{\mathrm {der}} &  \SL_4 \cong \Spin_{6} & \SL_2' \times \Spin_{8}&  \Spin_{12}  \\ 
 \end{array}. 
 \]

  Let $Z$ be the center of $H$, and fix an additive character $\psi$ of $Z$. Let $\omega_{\psi}$ be the corresponding Heisenberg representation of $H$. If  
  $\pi$ is a smooth representation of $G$, then $\pi_{Z,\psi}$ is a multiple of $\omega_{\psi}$, in fact, there exists a smooth $L^{\mathrm {der}}$-module 
  $FJ_G(\pi)$ (\cite{Weissman_FJ_small}) such that 
  \[ 
  \pi_{Z,\psi} \cong  FJ_G(\pi)\otimes \omega_{\psi}.
  \] 
    We shall now compute $FJ_G(\Pi)$. Observe that it is a module for 
  $\SL_2 \times L^{\mathrm {der}}$, where $\SL_2$ is the centralizer of $G$ in $\mathscr G$.  To that end, 
  observe  that $Z$ is a root space in $G$  and $Q$ is the normalizer of $Z$ in $G$. Similarly, 
the normalizer of $Z$ in $\mathscr G$ is a Heisenberg parabolic denoted (abusing the notation) by $\mathscr Q =\mathscr L \mathscr U$.  
 Recall that $\Pi_{Z,\psi}\cong \Omega_{\psi}$, the Heisenberg representation of $\mathscr U$. 
One easily checks that the complement of the symplectic space $H/Z$ in $\mathscr U/Z$ is the symplectic space 
\[ 
V_2 \otimes V_{2n} 
\] 
for $2n=6,8$ and $12$, respectively, where $V_2$ is the standard representation of $\SL_2$ (the centralizer of $G$), and $V_{2n}$ is the orthogonal 
representation of $\Spin_{2n}$.  In the case  $2n=8$, the factor $\SL_2'$ acts trivially!  Thus we have a factorization  
\[ 
\Omega_{\psi} = \omega_{\psi}^{4n} \otimes \omega_{\psi}. 
\] 
The following proposition is an easy consequence:  

\begin{prop} Let $\Pi$ be the minimal representation of $\mathscr G$. Then 
\[ 
FJ_G(\Pi) \cong  \omega_{\psi}^{4n} 
\] 
the Weil representation of $\SL_2 \times \Spin_{2n}$, $2n=6,8,12$, respectively,  with $\SL_2'$ acting trivially in the case $2n=8$.  
\end{prop}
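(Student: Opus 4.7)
The plan is to read the proposition as essentially a bookkeeping consequence of the structural facts established in the paragraph immediately preceding it, once one invokes Stone--von Neumann twice. The three ingredients to combine are: (i) $\Pi_{Z,\psi} \cong \Omega_\psi$ as an $\mathscr U$-module; (ii) the symplectic decomposition $\mathscr U/Z \cong H/Z \oplus (V_2 \otimes V_{2n})$ of orthogonal symplectic summands; and (iii) the definition of $FJ_G$ via $\Pi_{Z,\psi} \cong FJ_G(\Pi) \otimes \omega_\psi$.

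First, I would justify $\Pi_{Z,\psi} \cong \Omega_\psi$. Since $Z$ is central in $\mathscr U$ and $\psi$ is a non-trivial character of $Z$, the twisted Jacquet module $\Pi_{Z,\psi}$ is a smooth $\mathscr U$-module on which $Z$ acts by $\psi$, hence by Stone--von Neumann it is a (possibly infinite) multiple of $\Omega_\psi$. Minimality of $\Pi$, which is precisely the statement that the associated variety is the minimal nilpotent orbit, forces multiplicity one; this identification is well known from \cite{KS_minimal} and \cite{GS_minimal} and is already invoked in Section \ref{minimal_rep}.

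Second, using the fact that $H/Z$ and $V_2 \otimes V_{2n}$ are orthogonal symplectic subspaces of $\mathscr U/Z$, Stone--von Neumann applied to the Heisenberg group $\mathscr U$ factors the unique irreducible representation with central character $\psi$ as
\[
\Omega_\psi \;\cong\; \omega_\psi^{4n} \otimes \omega_\psi,
\]
where the first factor is the Schwartz space of a Lagrangian in $V_2 \otimes V_{2n}$ and the second is the Heisenberg representation of $H$. Inserting this into (iii) gives $FJ_G(\Pi) \cong \omega_\psi^{4n}$ as a vector space, and as a module for the relevant quotient of the normalizer, which contains $\SL_2 \times L^{\mathrm{der}}$.

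The main thing actually requiring verification, and the step I expect to be the main obstacle, is that the restriction of the action of $\SL_2 \times L^{\mathrm{der}}$ on $\omega_\psi^{4n}$ is the classical Weil representation of $\SL_2 \times \Spin_{2n}$, and in particular that the extra factor $\SL_2'$ in $L^{\mathrm{der}}$ acts trivially when $2n=8$. This must be read off from the root-theoretic description of $\mathscr U$: the root subgroups spanning $V_2 \otimes V_{2n}$ sit in a sub-Levi $\SL_2 \times \Spin_{2n}$ inside $\mathscr L$, and in the $E_7$ case the complementary factor $\SL_2'$ commutes with every root contributing to $V_2 \otimes V_8$, hence acts trivially on that symplectic space. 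Given this structural fact, the action on the Weil model is forced by the unique-up-to-scalars intertwining of the Heisenberg action, so the identification with $\omega_\psi^{4n}$ as a module for $\SL_2 \times \Spin_{2n}$ is automatic. No analytic or spectral input is needed beyond Stone--von Neumann.
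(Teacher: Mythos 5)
Your proposal is correct and follows essentially the same route as the paper: the authors record $\Pi_{Z,\psi}\cong\Omega_{\psi}$, the orthogonal symplectic decomposition of $\mathscr U/Z$ into $H/Z$ and $V_2\otimes V_{2n}$ (with $\SL_2'$ acting trivially when $2n=8$), and the resulting factorization $\Omega_{\psi}=\omega_{\psi}^{4n}\otimes\omega_{\psi}$ in the paragraph preceding the proposition, and then declare the proposition an easy consequence of the defining property $\Pi_{Z,\psi}\cong FJ_G(\Pi)\otimes\omega_{\psi}$. Your additional remarks justifying multiplicity one via minimality and checking the $\SL_2'$ action root-theoretically merely flesh out what the paper leaves as assertions.
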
 

\begin{cor} \label{C:fj} 
 Let $\sigma$ be an irreducible representation of $\SL_2$. Let $\Theta(\sigma)$ be the big theta lift to $G$. 
 Then 
 \[ 
 FJ_G(\Theta(\sigma)) \cong \Theta_{\psi}(\sigma),
 \] 
  where $\Theta_{\psi}(\sigma)$ is the classical theta lift of  $\sigma$ to 
$\Spin_{2n}$, for $2n=6,8,12$, respectively. 
\end{cor}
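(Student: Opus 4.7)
The plan is purely formal: assemble Corollary \ref{C:fj} from the preceding Proposition by commuting two coinvariant functors and using the factorization $\Omega_\psi \cong \omega_\psi^{4n}\otimes \omega_\psi$ noted just above.

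First, I unravel the definitions. By definition, $\Theta(\sigma) = (\Pi\otimes \sigma^\vee)_{\SL_2}$ and the Fourier–Jacobi functor $FJ_G$ is characterized by the isomorphism $\pi_{Z,\psi}\cong FJ_G(\pi)\otimes \omega_\psi$ of smooth $L^{\mathrm{der}}\times H$-modules. So the equation I need to establish may be rewritten as
\[
\Theta(\sigma)_{Z,\psi} \;\cong\; \Theta_\psi(\sigma) \otimes \omega_\psi,
\]
after which the uniqueness of the Heisenberg representation $\omega_\psi$ lets me peel off the $\omega_\psi$ factor to conclude $FJ_G(\Theta(\sigma))\cong \Theta_\psi(\sigma)$.

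Next, I would verify the two commuting–functor compatibilities. The $\SL_2$ is the centralizer of $G$ in $\mathscr G$, and $Z\subset H\subset Q\subset G$, so the actions of $\SL_2$ and of $(Z,\psi)$-twisted coinvariance commute. Since taking twisted coinvariants is an exact functor, we obtain
\[
\Theta(\sigma)_{Z,\psi} \;=\; \bigl((\Pi\otimes \sigma^\vee)_{\SL_2}\bigr)_{Z,\psi} \;\cong\; \bigl(\Pi_{Z,\psi}\otimes \sigma^\vee\bigr)_{\SL_2}.
\]
Now I invoke the identification $\Pi_{Z,\psi}\cong \Omega_\psi$ together with the factorization $\Omega_\psi\cong \omega_\psi^{4n}\otimes \omega_\psi$ of the previous paragraph (with $\SL_2$ acting only on the first factor as part of the dual pair $\SL_2\times \Spin_{2n}$, trivially on $\omega_\psi$). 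This gives
\[
\bigl(\Pi_{Z,\psi}\otimes \sigma^\vee\bigr)_{\SL_2} \;\cong\; \bigl(\omega_\psi^{4n}\otimes \sigma^\vee\bigr)_{\SL_2}\otimes \omega_\psi \;=\; \Theta_\psi(\sigma)\otimes \omega_\psi,
\]
where in the last step I use the definition of the classical big theta lift. Combining the two displayed isomorphisms yields the desired formula.

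The only non-formal input is the claim that $\SL_2$ acts trivially on the $\omega_\psi$ factor of $\Omega_\psi$. This is contained in the preceding Proposition (and its proof), where the complement of $H/Z$ inside $\mathscr U/Z$ is identified with $V_2\otimes V_{2n}$ with $\SL_2$ acting on the $V_2$ factor; the Heisenberg representation of $H$ splits off as a tensor factor on which the external $\SL_2$ acts trivially (by Schur's lemma, since $\SL_2$ centralizes $H$ and $\omega_\psi$ is irreducible). Modulo this bookkeeping, which the Proposition already supplies, there is no real obstacle — the corollary is a direct consequence of exactness of $FJ_G$ combined with the fact that the $\SL_2$ factor of the dual pair commutes with everything in sight.
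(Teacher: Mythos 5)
Your argument is correct and is essentially the paper's proof: the paper also commutes $FJ_G$ past the $\SL_2$-coinvariants and applies the preceding Proposition's identification $FJ_G(\Pi)\cong\omega_\psi^{4n}$, writing it as the one-line chain $FJ_G((\Pi\otimes\sigma^\vee)_{\SL_2})\cong(FJ_G(\Pi)\otimes\sigma^\vee)_{\SL_2}\cong\Theta_\psi(\sigma)$. You have merely unwound the definition of $FJ_G$ into $(Z,\psi)$-coinvariants and peeled off the $\omega_\psi$ factor explicitly, which is the same computation.
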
 
\begin{proof} It follows from the following sequence of isomorphisms 
\[ 
FJ_G(\Theta(\sigma)) =FJ_G((\Pi\otimes \sigma^{\vee})_{\SL_2}) \cong (FJ_G(\Pi)\otimes \sigma^{\vee})_{\SL_2} \cong  \Theta_{\psi}(\sigma). 
\] 
\end{proof} 

The above corollary allows us to prove the following set of results: 

\begin{prop}
\label{prop_lift_cusp_irr}
 Let $\sigma$ be an irreducible supercuspidal representation of $\SL_2$. Then $\Theta(\sigma)$ is non-zero and irreducible.
Moreover, $\Theta(\sigma) \cong \Theta(\sigma')$ only if $\sigma'\cong\sigma$. 
\end{prop}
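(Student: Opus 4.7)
The plan is to reduce everything to the classical theta correspondence for $\SL_2 \times \Spin_{2n}$ handled in Section \ref{S:classical}, using the Fourier--Jacobi functor $FJ_G$ as a faithful probe. The non-vanishing $\Theta(\sigma) \neq 0$ is already contained in Proposition \ref{P:cuspidal_support}.

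For irreducibility, the key ingredients are: Corollary \ref{C:fj}, giving $FJ_G(\Theta(\sigma)) \cong \Theta_\psi(\sigma)$; Lemma \ref{lem_classical_lift_general_case}(5), which asserts that $\Theta_\psi(\sigma)$ is irreducible since $\sigma$ is supercuspidal; the exactness of $FJ_G$ together with Weissman's property that $FJ_G$ annihilates precisely the trivial representation among irreducibles; and Proposition \ref{P:cuspidal_support}, which gives $\Theta(\sigma)_N = 0$. I would argue by contradiction: suppose
\[
0 \to M \to \Theta(\sigma) \to Q \to 0
\]
is a short exact sequence of $G$-modules with $M, Q$ both nonzero. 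Applying $FJ_G$ and using irreducibility of $\Theta_\psi(\sigma)$, one of $FJ_G(M), FJ_G(Q)$ must vanish; by symmetry, say $FJ_G(Q) = 0$. But $Q$ is a quotient of $\Theta(\sigma)$, so $Q_N = 0$ by exactness of the Jacquet functor, which rules out the trivial representation as a subquotient of $Q$ (since $(1_G)_N = \mathbb{C}$). Combining Weissman's property with exactness of $FJ_G$, every irreducible subquotient of $Q$ would have nonzero $FJ_G$-image, forcing $FJ_G(Q) \neq 0$, a contradiction.

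For the uniqueness assertion, if $\Theta(\sigma) \cong \Theta(\sigma')$, applying $FJ_G$ yields $\Theta_\psi(\sigma) \cong \Theta_\psi(\sigma')$, and the backward half of Lemma \ref{lem_classical_lift_general_case} (i.e.\ classical Howe duality for the pair $\SL_2 \times \Spin_{2n}$) then forces $\sigma \cong \sigma'$.

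The main technical obstacle I anticipate is justifying the subquotient bookkeeping when $\Theta(\sigma)$ is not a priori of finite length; the standard remedy is to pass to an arbitrary finitely generated subrepresentation of $Q$, which has finite length by Bernstein's theorem for smooth representations of reductive $p$-adic groups, run the Jordan--Hölder argument there factor by factor, and then take the union over all such subrepresentations.
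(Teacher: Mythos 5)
Your argument is correct and is essentially the paper's own proof: the paper likewise combines Proposition \ref{P:cuspidal_support} (non-vanishing, and no trivial subquotient since $\Theta(\sigma)_N=0$), the exactness of $FJ_G$ and the fact that it annihilates only the trivial representation, Corollary \ref{C:fj}, and the irreducibility and injectivity of the classical lift from Lemma \ref{lem_classical_lift_general_case}, to conclude that the length of $\Theta(\sigma)$ is at most that of $\Theta_\psi(\sigma)$. One small correction to your closing remark: a finitely generated smooth representation of a reductive $p$-adic group need not have finite length (Bernstein's theorem requires admissibility), but you only need that a nonzero finitely generated subrepresentation admits an irreducible quotient, which already produces a trivial subquotient whenever $FJ_G$ vanishes on a nonzero piece.
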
 
\begin{proof} 
By Proposition \ref{P:cuspidal_support} we know that $\Theta(\sigma)$ does not contain the trivial representation of $G$  as a subquotient. 
Since $FJ_G$ kills only the trivial representation,  Corollary \ref{C:fj}  implies that the length of $\Theta(\sigma)$ is no more than that of  $\Theta_{\psi}(\sigma)$. 
Since, by Lemma \ref{lem_classical_lift_general_case}, $\Theta_{\psi}(\sigma)$ is irreducible we get that $\Theta(\sigma)$ is irreducible
 and non-isomorphic to $\Theta(\sigma')$ unless $\sigma\cong\sigma'$. 
\end{proof}  
 
 Next, we finish the description 
 of $\Theta(\sigma)$ for principal series representations started in Proposition \ref{prop_princ_series_SL2}.  
 
 \begin{prop} Let $P$ be the maximal parabolic in $G$ as in Proposition \ref{prop_princ_series_SL2}.  Then 
 \begin{itemize} 
 \item $\Theta(1_{\SL_2})$ is the unique irreducible quotient of $\Ind_P^G|\cdot|$,
 \item $\Theta(\St_{\SL_2})$ is the unique irreducible quotient of $\Ind_P^G|\cdot|^{-1}$,
 \item If $\chi$ is quadratic non-trivial, then 
 $\Theta(\pi_{\chi}^{\pm})$ are the two irreducible summands of $ \Ind_P^G(\chi)$. 
 \end{itemize} 
  \end{prop}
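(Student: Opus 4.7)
The plan is to apply the Fourier--Jacobi functor $FJ_G$ from Section \ref{S:exceptional} and reduce each claim to the already-established classical theta correspondence for $\SL_2\times\Spin_{2n}$. By Proposition \ref{prop_princ_series_SL2}, each $\Theta(\sigma)$ in the statement is a quotient of a degenerate principal series $\Ind_P^G(\chi_\sigma)$, where $\chi_\sigma$ equals $|\cdot|$, $|\cdot|^{-1}$, or $\chi$, respectively. By Proposition \ref{P:degenerate_ps}, each such induced representation has length two: with a unique irreducible subrepresentation and a unique irreducible quotient in the first two cases, and as a direct sum of two distinct irreducible summands in the quadratic case. Crucially, in none of these three cases does the trivial representation $1_G$ appear as a constituent, since by Proposition \ref{P:degenerate_ps} this occurs only at $\pm s = 1+2r$.

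Next I would apply $FJ_G$ to $\Theta(\sigma)$. By Corollary \ref{C:fj}, $FJ_G(\Theta(\sigma))\cong\Theta_\psi(\sigma)$, the classical theta lift to $\Spin_{2n}$, and Lemma \ref{lem_classical_lift_general_case} identifies this explicitly: the minimal representation of $\Spin_{2n}$ for $\sigma=1_{\SL_2}$, the unique irreducible subrepresentation of $I(1,1)$ for $\sigma=\St_{\SL_2}$, and one of the two distinct irreducible summands of $I(0,\chi_u)$ for $\sigma=\pi_\chi^\pm$. In every case the output is irreducible and non-zero, which already forces $\Theta(\sigma)\neq 0$.

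Now suppose for contradiction that $\Theta(\sigma)$ were the full length-two representation $\Ind_P^G(\chi_\sigma)$. Because $FJ_G$ is exact and annihilates only the trivial representation, and because we have verified that no constituent of $\Ind_P^G(\chi_\sigma)$ is trivial, applying $FJ_G$ to the composition series would yield a representation of length $\geq 2$, contradicting the irreducibility of $\Theta_\psi(\sigma)$. Hence $\Theta(\sigma)$ is a proper non-zero quotient of $\Ind_P^G(\chi_\sigma)$: namely the unique irreducible quotient in the first two bullets, and a single irreducible summand in the third. To finish the quadratic case one checks that $\Theta(\pi_\chi^+)\neq\Theta(\pi_\chi^-)$, which follows because their images under $FJ_G$ are the two distinct summands of $I(0,\chi_u)$; thus $\Theta(\pi_\chi^+)$ and $\Theta(\pi_\chi^-)$ exhaust the two summands of $\Ind_P^G(\chi)$. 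The main conceptual point, and only real obstacle, is ensuring that $FJ_G$ does not kill any constituent in play; once this is secured, the proof reduces to a one-line length count.
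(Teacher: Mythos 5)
Your proposal is correct and follows essentially the same route as the paper: identify $\Theta(\sigma)$ as a quotient of the length-two degenerate principal series via Proposition \ref{prop_princ_series_SL2}, note via Proposition \ref{P:degenerate_ps} that no constituent is trivial, and then use the exactness of $FJ_G$ (which kills only the trivial representation) together with Corollary \ref{C:fj} and the irreducibility of the classical lifts in Lemma \ref{lem_classical_lift_general_case} to bound the length and force irreducibility. The only difference is that you spell out the length-count contradiction and the distinctness of $\Theta(\pi_\chi^+)$ and $\Theta(\pi_\chi^-)$ explicitly, which the paper leaves implicit.
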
 
  \begin{proof} For the third bullet, we know that $\Theta(\pi^{\pm}_{\chi})$ are contained in $\Ind_P^G(\chi)$. 
  Since $\Ind_P^G(\chi)$ does not have a trivial subqotient (it follows from Proposition \ref{P:degenerate_ps}) the third 
  bullet follows from Corollary \ref{C:fj} , using that $\Theta_{\psi}(\pi^{\pm}_{\chi})$  are irreducible representations of $\Spin_{2n}$ 
  by Lemma \ref{lem_classical_lift_general_case}.  
 The first two bullets are proved in  the same way. 
   \end{proof} 
 
 Putting things together we have the following: 
\begin{thm} Let $\sigma$ be an irreducible representation of $\SL_2$. Then $\Theta(\sigma)$ is a non-zero, finite length representation of $G$ with unique irreducible 
quotient denoted $\theta(\sigma)$. If $\theta(\sigma)\cong \theta(\sigma')$, for two  irreducible representations $\sigma$ and $\sigma'$, then $\sigma\cong\sigma'$.  
\end{thm}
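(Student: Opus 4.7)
The plan is to prove the theorem by case analysis over the irreducible $\sigma$ of $\SL_2(F)$, piecing together the results of Sections \ref{S:principal}--\ref{S:exceptional}. The existence of a non-zero, finite-length lift with unique irreducible quotient is essentially bookkeeping; the substantive content is the Howe-duality-style injectivity.

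I would first enumerate the cases: (i) irreducible principal series $\sigma=\Ind_B^{\SL_2}(\chi)$, normalized (when $\chi\neq 1$) so that $|\chi|$ is a positive power of $|\cdot|$; (ii) the trivial $1_{\SL_2}$ and the Steinberg $\St$; (iii) the tempered summands $\pi_\chi^{\pm}$ for $\chi$ non-trivial quadratic; (iv) the irreducible supercuspidals. The proposition immediately preceding the theorem, combined with Proposition \ref{P:degenerate_ps}, yields in case (i) that $\Theta(\sigma)=\Ind_P^G(\chi)$, which is either irreducible or of length two with a unique irreducible quotient; in cases (ii) and (iii) the same proposition gives $\Theta(\sigma)$ as an explicit irreducible quotient or summand of $\Ind_P^G(\chi)$; and in case (iv), Proposition \ref{prop_lift_cusp_irr} gives irreducibility. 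In every case $\Theta(\sigma)$ is therefore non-zero and of finite length, and possesses a unique irreducible quotient $\theta(\sigma)$.

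For injectivity, my strategy is to recover $\sigma$ from $\theta(\sigma)$ via cuspidal supports. By Proposition \ref{P:cuspidal_support}, if $\sigma$ is supercuspidal then $\theta(\sigma)_N=0$, so $\theta(\sigma)$ is $P$-cuspidal, whereas for all non-supercuspidal $\sigma$ the lift $\theta(\sigma)$ is a subquotient of $\Ind_P^G(\chi)$ and has non-vanishing $P$-Jacquet module; hence no collision is possible between the supercuspidal and non-supercuspidal classes. Within the supercuspidal class, Proposition \ref{prop_lift_cusp_irr} already records the injectivity. For the remaining cases, I would read the $P$-cuspidal support of $\theta(\sigma)$ off its Jacquet module: this determines $\chi\circ\varpi$, hence $\chi$, modulo the involution $\chi\leftrightarrow\chi^{-1}$, which matches the corresponding involution on the $\SL_2$ side (consistent with the isomorphism $\Ind_B^{\SL_2}(\chi)\cong\Ind_B^{\SL_2}(\chi^{-1})$ and the normalization convention). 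This distinguishes every pair of representations in cases (i)--(iii): the trivial and Steinberg are separated as unique quotients of $\Ind_P^G|\cdot|$ and $\Ind_P^G|\cdot|^{-1}$ respectively, and the quadratic-twist pair $\pi_\chi^{\pm}$ is separated because $\Ind_P^G(\chi)$ splits as a direct sum of two non-isomorphic irreducibles by Proposition \ref{P:degenerate_ps} (both summands being hit, by the proposition immediately preceding the theorem).

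The main obstacle I expect is not in any single case but in the uniformity of the cuspidal-support argument: one must verify, in each of the three ambient pairs $G=\SL_6,\Spin_{12},E_7$, that the $M$-cuspidal support of the unique irreducible quotient of $\Ind_P^G(\chi)$ indeed reads back $\chi\circ\varpi$ modulo the expected involution. A clean fallback for the quadratic case (iii), should any subtlety arise there, is to invoke Corollary \ref{C:fj}: the Fourier-Jacobi functor sends $\theta(\pi_\chi^{\pm})$ to the classical theta lifts $\Theta_\psi(\pi_\chi^{\pm})$ on $\Spin_{2n}$, which are distinct by Lemma \ref{lem_classical_lift_general_case}, and since $FJ_G$ annihilates only the trivial representation, distinctness downstairs propagates upstairs.
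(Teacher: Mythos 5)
Your proposal is correct and follows essentially the same route as the paper, which offers no separate argument for this theorem beyond ``putting things together'': the statement is exactly the assembly of Propositions \ref{prop_princ_series_SL2}, \ref{P:cuspidal_support}, \ref{prop_lift_cusp_irr} and the proposition on $1_{\SL_2}$, $\St_{\SL_2}$ and $\pi_\chi^{\pm}$, with the supercuspidal/non-supercuspidal separation coming from cuspidal support and the remaining injectivity from the explicit Langlands-type data. Your only slightly overstated step is reading off from Proposition \ref{P:degenerate_ps} that the two summands of $\Ind_P^G(\chi)$ ($\chi$ quadratic non-trivial) are non-isomorphic, but your fallback via $FJ_G$ and Lemma \ref{lem_classical_lift_general_case} covers exactly that point.
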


 \section{Lift from $G$ to $\SL_2$} \label{S:theta_back}  
  
  Recall that we have the dual pair $\SL_2\times G$.  Let $\pi$ be an irreducible representation of $G$.  We can define its 
  big-theta lift to $\SL_2$ by the usual $\Theta(\pi) = (\Pi \otimes \pi^{\vee})_G$.  If $\Theta(\pi)\neq 0$, a priori it is not clear 
  that it has an irreducible quotient. Our first result is that $\Theta(\pi)$ has finite length. This clearly implies that  it has  
  an irreducible $\SL_2$-quotient $\sigma$.  In turn, this implies that $\pi$ is the lift of $\sigma$. We shall then show that $\Theta(\pi)$ is 
  always irreducible (if non-zero) and then use this to prove that $\Omega_{\psi}$ is multiplicity free $G$-module.  
 
 \smallskip 
  
  \begin{prop} \label{P:finite} 
  Let $\pi$ be an irreducible representation of $G$. Then $\Theta(\pi)$ is a finite length $\SL_2$-module.  
\end{prop}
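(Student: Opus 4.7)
My plan is to compute the normalized Jacquet module $r_B(\Theta(\pi))$ with respect to the Borel $B=TU$ of $\SL_2$, show that it is finite dimensional as a vector space, and then combine this with a finite generation argument to deduce that $\Theta(\pi)$ has finite length as an $\SL_2$-module. Since the $\SL_2$- and $G$-actions on $\Pi$ commute, the functor $(-)_U$ commutes with the $G$-coinvariants functor, so
\[
r_B(\Theta(\pi)) \cong (r_B(\Pi) \otimes \pi^{\vee})_G
\]
as $T$-modules. I would then feed the two-step filtration of $r_B(\Pi)$ from Proposition \ref{P:filtration} into the right-exact functor $(-\otimes \pi^{\vee})_G$ and analyze each piece.

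For the top quotient $\Pi[G] \otimes |a|^{1+r} \oplus 1 \otimes |a|^{1+2r}$, both $(\Pi[G]\otimes \pi^{\vee})_G$ and $(1\otimes \pi^{\vee})_G$ are at most one dimensional by Schur's lemma applied to the irreducibles $\Pi[G]$, $1_G$ and $\pi$, so the induced quotient of $r_B(\Theta(\pi))$ is at most two dimensional. For the bottom sub $\Ind_{P\times T}^{G\times T}(C_c(\GL_1))$, I would use that $G/P$ is compact so normalized parabolic induction coincides with compact induction, together with second adjointness to compute
\[
\bigl(\Ind_P^G(C_c(\GL_1)) \otimes \pi^{\vee}\bigr)_G^{\ast} \cong \Hom_G(\Ind_P^G(C_c(\GL_1)), \pi) \cong \Hom_M(C_c(\GL_1), r_{\bar P}(\pi)).
\]
Because the $M$-action on $C_c(\GL_1)$ factors through the minuscule character $\varpi \colon M \twoheadrightarrow \GL_1$, this Hom space is controlled by the $\GL_1$-isotypic decomposition of $r_{\bar P}(\pi)$, and finite length together with admissibility of $r_{\bar P}(\pi)$ as an $M$-module yields a finite dimensional answer. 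Combining the two pieces shows $r_B(\Theta(\pi))$ is finite dimensional.

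Finite dimensionality of the Jacquet module on the rank-one group $\SL_2$ yields admissibility of $\Theta(\pi)$. For finite generation I would exploit that $\Pi$ is cyclic as a $\mathscr G$-module and is generated by any nonzero $K$-fixed vector for a small compact open $K$; taking $G$-coinvariants of $\Pi \otimes \pi^{\vee}$ and using the same filtration, only finitely many $\SL_2$-translates of the finite dimensional subspace $\Pi^{K} \otimes (\pi^{\vee})^{K \cap G}$ are needed to generate $\Theta(\pi)$. Admissibility combined with finite generation on $\SL_2$ then forces finite length.

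The main obstacle will be controlling the bottom piece $(\Ind_P^G(C_c(\GL_1)) \otimes \pi^{\vee})_G$: making the second adjointness identification rigorous, handling the possible $\mathrm{Tor}$ contribution produced when the non-exact functor $(-\otimes \pi^{\vee})_G$ is applied to the short exact sequence, and verifying that only finitely many $\GL_1$-isotypic components of $r_{\bar P}(\pi)$ contribute to the Hom space.
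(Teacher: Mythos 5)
Your computation of $r_B(\Theta(\pi))$ is essentially the paper's argument: the paper also reduces to $\Hom_G(r_B(\Pi),\pi)$, feeds in the filtration of Proposition \ref{P:filtration}, and uses second adjointness to land on $\Hom_M(C_c(\GL_1),r_{\bar P}(\pi))$, bounded by the number of one-dimensional $M$-subquotients of the finite length module $r_{\bar P}(\pi)$. (Your worry about a $\mathrm{Tor}$ term is unnecessary: for an upper bound on the dimension you only need right-exactness of $(-\otimes\pi^{\vee})_G$, or, as the paper does, you can dualize and use left-exactness of $\Hom$.)

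The gap is in the last step, where you upgrade finite-dimensionality of the Jacquet module to finite length. First, the assertion that finite-dimensionality of $r_B(\Theta(\pi))$ yields admissibility is false as stated: an infinite multiple of a supercuspidal representation of $\SL_2$ has vanishing Jacquet module but is not admissible, so the cuspidal part must be controlled separately. Second, the finite generation argument you offer in its place is not actually a proof: cyclicity of $\Pi$ over $\mathscr G$ does not obviously descend to finite generation of $(\Pi\otimes\pi^{\vee})_G$ as an $\SL_2$-module, and the claim that finitely many $\SL_2$-translates of $\Pi^{K}\otimes(\pi^{\vee})^{K\cap G}$ suffice is exactly what would need to be established. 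The paper avoids both problems by handling the cuspidal part through the correspondence itself: if $\sigma$ is a supercuspidal summand of $\Theta(\pi)$, then $\pi\otimes\sigma$ is a quotient of $\Pi$, so $\pi\cong\Theta(\sigma)$ by Proposition \ref{prop_lift_cusp_irr}, and injectivity of the lift forces $\sigma$ to be the unique such summand. With the cuspidal part finite, the complement embeds (by second adjointness) into a principal series induced from the finite-dimensional $r_B(\Theta(\pi))$, and finite length follows. You should either reproduce that argument or give a genuine proof of finite generation; as written, the proposal does not close.
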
 
\begin{proof}  
If $\sigma$ is a supercuspidal summand of $\Theta(\pi)$, then $\pi\otimes \sigma$ is a quotient of the minimal representation, 
and by Proposition \ref{prop_lift_cusp_irr}, $\pi=\Theta(\sigma)$. Since lifting from $\SL_2$ to $G$ is one to one, 
$\sigma$ is the unique supercuspidal summand of $\Theta(\pi)$.  Now, to finish the proof, it suffices to show that $r_{B}(\Theta(\pi))$ is finite 
dimensional, where $B$ is the Borel subgroup of $\SL_2$. 
 To that end, observe that $r_{B} (\Theta(\pi))\otimes \pi$ is a quotient of $r_{B} (\Pi)$, hence it suffices to show that 
\[ 
\Hom_G(r_{B} (\Pi), \pi) 
\] 
is finite dimensional. Now recall, from Proposition \ref{P:filtration}, the filtration of {$r_{B} (\Pi)$,} where the top is a finite length $G$-module, 
a direct sum of  trivial and minimal representation, while the large bottom piece is 
$\Ind_{P}^{G} (C_c(\GL_1))$.  Thus we need to prove finiteness of 
\[ 
\Hom_G( \Ind_{P}^{G} (C_c(\GL_1)), \pi) \cong   \Hom_M(C_c(\GL_1), r_{\bar P}(\pi)). 
\] 
Recall that $M$ acts through the weight $\varpi$ on $C_c(\GL_1)$ so the dimension of the right hand side is bounded by the number of 
one-dimensional $M$-subquotients. But this is finite since  $r_{\bar P}(\pi)$ has a finite length, as an $M$-module.   
\end{proof} 

 We remark, if  $\pi=1_G$ is the trivial representation of  $G$, then the above argument gives 
\[ 
\dim \Hom_G(r_{B} (\Pi), 1_G) \leq 2. 
\]

\begin{cor} 
Let $\pi$ be an irreducible representation of $G$. Then $\Theta(\pi)\neq 0$ if and only if $\pi$ is a lift of an irreducible representation $\sigma$ of $\SL_2$.  
\end{cor}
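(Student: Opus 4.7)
The plan is to deduce this corollary directly from Proposition~\ref{P:finite} together with the ``see-saw'' symmetry between the two big-theta constructions and the uniqueness of the irreducible quotient $\theta(\sigma)$ of $\Theta(\sigma)$ established in the Theorem immediately preceding this section. The key auxiliary tool is the standard non-vanishing adjunction
\[
\Hom_G(\Theta(\sigma),\pi) \neq 0 \; \Longleftrightarrow \; \Hom_{\SL_2\times G}(\Pi,\sigma\otimes\pi) \neq 0 \; \Longleftrightarrow \; \Hom_{\SL_2}(\Theta(\pi),\sigma) \neq 0,
\]
valid for irreducible admissible smooth $\sigma$ of $\SL_2$ and $\pi$ of $G$; this is obtained by the same unwinding of coinvariants already used implicitly in Section~\ref{S:principal}, only with $\pi$ (resp.\ $\sigma$) adjoined to the target.

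For the implication ``$\pi$ is a lift $\Rightarrow \Theta(\pi)\neq 0$'', I would argue as follows. If $\pi\cong\theta(\sigma)$ for some irreducible $\sigma$, then $\pi$ is an irreducible quotient of $\Theta(\sigma)$, so the leftmost Hom space above is non-zero, and the see-saw then forces $\Theta(\pi)\neq 0$. For the converse, assume $\Theta(\pi)\neq 0$. Proposition~\ref{P:finite} guarantees that $\Theta(\pi)$ has finite length, hence admits some irreducible quotient $\sigma$; running the see-saw in the other direction yields a non-zero $G$-equivariant map $\Theta(\sigma)\to\pi$, and since by the Theorem just cited $\Theta(\sigma)$ has a unique irreducible quotient $\theta(\sigma)$, irreducibility of $\pi$ forces $\pi\cong\theta(\sigma)$, so $\pi$ is indeed a lift.

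I do not expect a genuine obstacle here: the corollary is essentially a bookkeeping consequence of the results already in place, namely Proposition~\ref{P:finite} and the existence and uniqueness of $\theta(\sigma)$. The only point deserving a brief check is the see-saw identity itself; this follows from a transpose construction using the evaluation pairing $\sigma\otimes\sigma^{\vee}\to\mathbb{C}$, with the smoothness of vectors in $\Pi$ ensuring that the resulting equivariant map takes values in the smooth representation $\sigma$ rather than merely in its algebraic double dual.
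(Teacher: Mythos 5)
Your proposal is correct and follows essentially the same route as the paper: the paper's proof of the converse is precisely ``$\Theta(\pi)$ has finite length by Proposition~\ref{P:finite}, hence has an irreducible quotient $\sigma$, thus $\pi$ is a lift of $\sigma$,'' with the adjunction/see-saw step and the uniqueness of $\theta(\sigma)$ left implicit. You merely spell out that adjunction and also record the (easy) forward direction, which the paper omits.
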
 
\begin{proof} 
By Proposition \ref{P:finite}, $\Theta(\pi)$ has finite length. Hence it has an irreducible quotient $\sigma$. Thus $\pi$ is a lift of $\sigma$. 
\end{proof} 

\begin{prop} Let $\pi$ be a lift of  an irreducible representation $\sigma$ of $\SL_2$.  Then $\Theta(\pi)\cong \sigma$.  
\end{prop}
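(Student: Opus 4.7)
I will proceed in three steps: produce a surjection $\Theta(\pi)\twoheadrightarrow\sigma$ by adjunction, show $\sigma$ is the only irreducible quotient of $\Theta(\pi)$, and then rule out a non-trivial kernel via a Jacquet-level Hom bound.

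Coinvariants adjunction combined with tensor--Hom gives the chain
\[
\Hom_{\SL_2}(\Theta(\pi),\sigma)\;\cong\;\Hom_{\SL_2\times G}(\Pi,\pi\otimes\sigma)\;\cong\;\Hom_G(\Theta(\sigma),\pi),
\]
and the right-hand space is one-dimensional by Schur, since $\pi=\theta(\sigma)$ is the unique irreducible quotient of $\Theta(\sigma)$. Fix a non-zero surjection $f\colon\Theta(\pi)\twoheadrightarrow\sigma$ with kernel $K$. Applying the same chain to any irreducible quotient $\sigma'$ of $\Theta(\pi)$ exhibits $\pi$ as a quotient of $\Theta(\sigma')$, so $\theta(\sigma')\cong\pi\cong\theta(\sigma)$; the injectivity of the forward lift then forces $\sigma'\cong\sigma$, and $\sigma$ is the only irreducible quotient of $\Theta(\pi)$.

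To control $\Theta(\pi)$ itself I probe via principal series: for every character $\tau$ of $T$, the same adjunction followed by normalized Frobenius reciprocity for $B\subset\SL_2$ gives
\[
\Hom_{\SL_2}(\Theta(\pi),\Ind_B^{\SL_2}(\tau))\;\cong\;\Hom_{T\times G}(r_B(\Pi),\tau\otimes\pi).
\]
Plugging in the two-step filtration of $r_B(\Pi)$ from Proposition~\ref{P:filtration}, the top $\Pi[G]\otimes|\cdot|^{1+r}\oplus\mathbb{C}\otimes|\cdot|^{1+2r}$ contributes a non-zero Hom exactly when $(\pi,\tau)\in\{(\Pi[G],|\cdot|^{1+r}),(1_G,|\cdot|^{1+2r})\}$, while the sub $\Ind_{P\times T}^{G\times T}(C_c(\GL_1))$ reduces by normalized Frobenius on the $G$-factor to $\Hom_{T\times M}(C_c(\GL_1),\tau\otimes r_P(\pi))$, whose dimension counts the $M^{\mathrm{der}}$-trivial characters of $r_P(\pi)$ whose $\varpi$-pullback matches $\tau$. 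Running through the classes of $\pi=\theta(\sigma)$ supplied by Proposition~\ref{prop_princ_series_SL2} and Proposition~\ref{P:cuspidal_support} (irreducible degenerate principal series, Langlands quotients/subs at the reducibility points, summands at a non-trivial quadratic twist, and supercuspidal-origin lifts), one verifies $\dim\Hom_{\SL_2}(\Theta(\pi),\Ind_B^{\SL_2}(\tau))=\dim\Hom_{\SL_2}(\sigma,\Ind_B^{\SL_2}(\tau))$ for every $\tau$. Exactness of $r_B$ and surjectivity of $f$ then force $r_B(K)=0$, so $K$ is a finite sum of supercuspidal $\SL_2$-representations.

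The Bernstein decomposition concludes: if $\sigma$ is non-supercuspidal, $K$ sits in blocks disjoint from $\sigma$'s and decomposes off as a direct summand of $\Theta(\pi)$, whence any irreducible quotient of $K$ would also be an irreducible quotient of $\Theta(\pi)$ and must equal $\sigma$ by Step~2 --- a contradiction unless $K=0$. If $\sigma$ is supercuspidal, the supercuspidal block is semi-simple, so $\Theta(\pi)\cong\sigma^{\oplus n}$, and $n=\dim\Hom_{\SL_2}(\Theta(\pi),\sigma)=1$ by Step~1.

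\textbf{Main obstacle.} The hard part is the case-by-case verification that the Frobenius--Hom dimensions produced from the filtration match those of $\sigma$: it hinges on the geometric-lemma description of $r_P(\Ind_P^G(\chi))$ for the principal series, on the explicit socle/cosocle structure of the degenerate principal series at the reducibility points $\pm 1,\pm(1+r),\pm(1+2r)$, and on the vanishing $r_P(\pi)=0$ coming from Proposition~\ref{P:cuspidal_support} in the supercuspidal case.
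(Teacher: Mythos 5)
Your Steps 1 and 2 are sound: the adjunction chain giving $\dim\Hom_{\SL_2}(\Theta(\pi),\sigma)=\dim\Hom_G(\Theta(\sigma),\pi)=1$ and the identification of $\sigma$ as the unique irreducible quotient of $\Theta(\pi)$ both follow from the theorem at the end of Section \ref{S:exceptional}. The problem is Step 3, which is where all the content lies, and it has two genuine defects. First, the decisive computation is only asserted: "one verifies" that the Hom dimensions match for every class of $\pi$ and every $\tau$ is precisely the hard part, and it is not carried out (nor is the exactness of the filtration computation justified --- the long exact sequence only gives an inequality unless the relevant $\mathrm{Ext}_T$ terms vanish). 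Second, and more seriously, even granting the verification, the inference from "$\dim\Hom_{\SL_2}(\Theta(\pi),\Ind_B^{\SL_2}(\tau))=\dim\Hom_{\SL_2}(\sigma,\Ind_B^{\SL_2}(\tau))$ for all $\tau$" to "$r_B(K)=0$" is invalid. By Frobenius reciprocity these Hom spaces compute $\Hom_T(r_B(\Theta(\pi)),\tau)$, which sees only the cosocle of the $T$-module $r_B(\Theta(\pi))$, whereas $r_B(K)$ sits as a \emph{submodule} in the exact sequence $0\to r_B(K)\to r_B(\Theta(\pi))\to r_B(\sigma)\to 0$. A non-split self-extension of $\sigma$ by $\sigma$, for instance, would have the same irreducible quotient, the same one-dimensional Hom into $\sigma$, and (if the extension of Jacquet modules is non-split) the same Hom dimensions into every principal series, yet $K\neq 0$. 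To make your strategy work you would need to bound the actual dimension of $r_B(\Theta(\pi))$ (i.e.\ compute the full linear dual $\Hom_G(r_B(\Pi),\pi)$ through the filtration, as the paper does for $\pi=1_G$), not just its Hom spaces into characters.

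The paper avoids all of this with a different mechanism: it applies the Fourier--Jacobi functor $FJ_G$ to the surjection $\Pi\twoheadrightarrow\Theta(\pi)\otimes\pi$, obtaining $\omega_\psi^{4n}\twoheadrightarrow\Theta(\pi)\otimes FJ_G(\pi)$ with $FJ_G(\pi)=\theta_\psi(\sigma)$ irreducible on $\Spin_{2n}$; then $\Theta(\pi)$ is a quotient of the classical big theta lift of $\theta_\psi(\sigma)$ back to $\SL_2$, which Lemma \ref{lem_classical_lift_general_case} shows is irreducible and equal to $\sigma$. This reduction to the already-solved classical correspondence is what kills the potential kernel $K$ in one stroke, and some substitute for it (or an honest dimension count of $r_B(\Theta(\pi))$) is needed to complete your argument.
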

\begin{proof}  
 If $\pi=1_G$ we proved above that $r_{B} (\Theta(1_G))$ is at most 2-dimensional. On the other hand, $1_G$ is the 
lift of an irreducible principal series representation  $\sigma$ of  $\SL_2$, hence $\Theta(1_G)\cong \sigma$ as there is no room for anything else. 
For the general case we need the following.

\begin{lemma} Let $\pi$ the unique irreducible quotient of $\Theta(\sigma)$.  If $\pi \neq 1_G$  then $FJ_G(\pi)$ is the unique irreducible 
quotient of  the classical theta lift $\Theta_{\psi}(\sigma)$ to $\Spin_{2n}$, for $2n=6,8$ and $12$, respectively.  
\end{lemma} 
\begin{proof}  Since $\pi\neq 1_G$,  $\Theta(\sigma)=\pi$ unless $\pi$ is the minimal representation of $G$. Recall that 
$FJ_G(\Theta(\sigma))$ is $\Theta_{\psi}(\sigma)$. This, in turn, is irreducible unless it has the trivial representation as the unique quotient.  
This happens precisely when $\pi$ is the minimal representation when $FJ(\pi)$ is the trivial representation. This completes the proof. 
\end{proof}

Assume $\pi\neq 1_G$, so $FJ(\pi)\cong \theta_{\psi}(\sigma)$, the unique irreducible quotient of $\Theta_{\psi}(\sigma)$. We have  a surjection
\[ 
\omega_{\psi}^{4n} \cong FJ_{G}(\Pi) \rightarrow  \Theta(\pi) \otimes FJ_{G}(\pi) \cong   \Theta(\pi) \otimes \theta_{\psi}(\sigma). 
\] 
 But, according to Lemma \ref{lem_classical_lift_general_case}, the lift from $\Spin_{2n}$ to $\SL_2$ is necessarily irreducible, so $\sigma=\Theta(\pi).$
 
\end{proof}

Thus now we can analyze the Weil representation $\Omega_{\psi} \cong \Pi_{U,\psi}$ via the theta correspondence. 

\begin{thm} \label{T:mult_free} 
 Let $\pi$ be an irreducible representation of $G$. Then $\dim \Hom_G(\Omega_{\psi},\pi) \leq 1$ and it is 1 precisely when $\pi$ is a 
 lift of an irreducible $(U,\psi)$-generic representation of $\SL_2$. 
\end{thm}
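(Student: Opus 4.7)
The plan is to reduce the statement to a computation of $(U,\psi)$-coinvariants of $\Theta(\pi)$, invoke the irreducibility of $\Theta(\pi)$ established in Section \ref{S:theta_back}, and then conclude via Whittaker uniqueness for $\SL_2(F)$.

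First, I would establish the identification
\[
\Hom_G(\Omega_{\psi},\pi) \cong \Hom_{\mathbb C}\bigl(\Theta(\pi)_{U,\psi},\mathbb C\bigr).
\]
Since $U \subset \SL_2$ centralizes $G$ inside $\mathscr G$, and $U$ acts trivially on the $G$-module $\pi^{\vee}$, the functors of $G$-coinvariants and $(U,\psi)$-coinvariants commute on $\Pi \otimes \pi^{\vee}$. Using $\Omega_{\psi} = \Pi_{U,\psi}$, the identification follows from the chain
\begin{align*}
\Hom_G(\Omega_{\psi},\pi) &\cong \Hom_G(\Pi_{U,\psi}\otimes \pi^{\vee},\mathbb C) \\
&\cong \Hom_{\mathbb C}\bigl((\Pi_{U,\psi}\otimes \pi^{\vee})_G,\mathbb C\bigr) \\
&\cong \Hom_{\mathbb C}\bigl(((\Pi\otimes \pi^{\vee})_G)_{U,\psi},\mathbb C\bigr) \\
&\cong \Hom_{\mathbb C}\bigl(\Theta(\pi)_{U,\psi},\mathbb C\bigr).
\end{align*}

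Next, I would apply the results of Section \ref{S:theta_back}: $\Theta(\pi)$ is either zero or irreducible, and when nonzero it is exactly the irreducible representation $\sigma$ of $\SL_2$ of which $\pi$ is the lift. Combined with Whittaker uniqueness for $\SL_2(F)$ --- namely $\dim \sigma_{U,\psi}\leq 1$ for any irreducible $\sigma$, which follows by restriction from an irreducible $\GL_2(F)$-representation containing $\sigma$ and the uniqueness of Whittaker models for $\GL_2$ --- this yields at once $\dim \Hom_G(\Omega_{\psi},\pi)\leq 1$. Equality holds precisely when $\Theta(\pi)\neq 0$, i.e., $\pi$ is a lift, and additionally $\Theta(\pi)_{U,\psi}\neq 0$, i.e., the lifted $\sigma$ is $(U,\psi)$-generic. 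This is exactly the desired characterization.

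I do not anticipate any substantive obstacle: the main technical work --- the irreducibility of $\Theta(\pi)$ whenever it is nonzero --- has already been carried out in Section \ref{S:theta_back}. The only delicate point is verifying that $G$-coinvariants and $(U,\psi)$-coinvariants commute on $\Pi \otimes \pi^{\vee}$, and this is immediate since $U$ acts only through the first tensor factor and commutes with the $G$-action.
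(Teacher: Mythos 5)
Your argument is correct and is essentially identical to the paper's proof: both rest on the identification $(\Omega_{\psi}\otimes\pi^{\vee})_G\cong\Theta(\pi)_{U,\psi}$ obtained by commuting the $(U,\psi)$- and $G$-coinvariant functors, followed by the fact from Section \ref{S:theta_back} that $\Theta(\pi)$ is either zero or the irreducible $\sigma$ of which $\pi$ is the lift, together with uniqueness of Whittaker functionals for $\SL_2(F)$. You merely make explicit two steps the paper leaves implicit (the duality between $\Hom_G(\Omega_{\psi},\pi)$ and the coinvariant space, and the Whittaker multiplicity-one input), both of which are handled correctly.
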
 
\begin{proof} We have 
\[ 
(\Omega_{\psi}\otimes \pi^{\vee})_G \cong  (\Pi_{U,\psi}\otimes \pi^{\vee})_G\cong \Theta(\pi)_{U,\psi} .  
\] 
Since $\Theta(\pi) \neq 0$ if and only if $\pi$ is a lift of an irreducible representation $\sigma$ of $\SL_2$, and then $\Theta(\pi)=\sigma$, and the theorem follows. 
\end{proof}

 \section{Mini theta}  \label{S:mini_theta}
 
 Let $V$ be the standard (symplectic) 2-dimensional representation of $\SL_2(F)$, realized as the set of $2\times 1$ column vectors. 
 Now we consider the $n$-fold tensor product $V\otimes  \cdots \otimes V$. This a faithful representation of the group 
 \[ 
S_n= \SL_2^n/\mu_2^{n-1}
 \] 
 where $\mu_2^{n-1}$ is the subgroup of the center of $\SL_2^n$, consisting of $(x_1, \ldots, x_n)$ such that $x_1 \cdots x_n=1$.  
 In order to understand the corresponding group of $F$-points, observe that on $V\otimes  \cdots \otimes V$ we have a faithful action of 
 \[ 
 G_n=  \GL_2^n/{\mathbb G}_m^{n-1} 
 \] 
 where ${\mathbb G}_m^{n-1}$ is the subgroup of the center of $\GL_2^n$ consisting of  $n$-tuples $(z_1, \ldots, z_n)$ of scalar matrices such that 
 $z_1\cdots z_n=1$.  The group $G_n(F)$ is given by the same quotient, by Hilbert 90. If $\pi$ is an irreducible representation of $\GL_2(F)$ then 
 $\pi\otimes \ldots \otimes \pi$ (tensor $n$-times) is naturally a representation of $G_n(F)$.

 \vskip 5pt

  Let $\det$ be the character of $G_n$ that restricts to the usual determinant on each $\GL_2$. Then $S_n$ is the kernel of the determinant, this 
  is an easy check over algebraic closure of $F$.  This description gives as the following inductive construction of $S_n$, which will play an important role. 
  Let 
  \[ 
  (G_{n-1} \times \GL_2)^{\det}=\{ (g,h) ~|~ \det g \cdot \det h=1\}. 
  \] 
  Let $\Delta {\mathbb G}_m$ be embedding of ${\mathbb G}_m$ in the center of the above group given by 
  $z\mapsto ((z, 1, \ldots, 1),z^{-1})= (1, z, \ldots, 1),z^{-1}) \ldots $.  Then 
  \[  
  S_n \cong (G_{n-1} \times \GL_2)^{\det}/ \Delta {\mathbb G}_m
  \] 
  and this is true on the level of $F$-points.   Thus any smooth $S_n$-module $\Omega$ can be viewed as an $ (G_{n-1} \times \GL_2)^{\det}$-module. Let 
  $\pi$ be an irreducible representation of $\GL_2(F)$. Let $\SL_2 \subset (G_{n-1} \times \GL_2)^{\det}$, sitting in the second factor. 
  Observe that $ (G_{n-1} \times \GL_2)^{\det}/ \SL_2 \cong G_{n-1}$. Thus 
  \[ 
 \Theta(\pi)=  (\Omega\otimes \pi^{\vee})_{\SL_2} 
  \] 
  is naturally a smooth $G_{n-1}$-module.  In the following three subsections, for $n=2,3,4$ we shall exhibit $\Omega$ such that 
  $\Theta(\pi)=\pi\otimes \ldots \otimes \pi$ for all irreducible Whittaker generic $\pi$.

 \subsection{Two $\SL_2$} 
 In this case $S_2= \SL_2\times \SL_2/\mu_2 \cong \SO(4)$. Indeed, 
  $V\otimes V$ is an orthogonal representation of $\SL_2\times \SL_2/\mu_2$. It can be identified with the space $M_2$  of $2\times 2$ matrices 
 by the map 
 \[ 
 v\otimes w \mapsto v\cdot w^{\top} 
 \] 
 where $w^{\top}$ is the transpose of $w$. The determinant is a quadratic form that turns $M_2$ into a quadratic space. 
  The action of $(g,h) \in (\GL_2\times \GL_2)^{\det} $ on $x\in M_2$ is given by $gxh^{\top}$. This gives 
 as isomorphism $ (\GL_2\times \GL_2)^{\det} / \Delta {\mathbb G}_m\cong \SO(4)$.  Since the  action preserves the determinant, it 
 follows that $ (\GL_2\times \GL_2)^{\det}$ acts naturally on the regular representation $C_c(\SL_2(F))$.  Now we have 
 
 \begin{prop} \label{P:theta_2} 
 Let $\pi$ be an irreducible representation of $\GL_2(F)$. Then 
 \[ 
 (C_c(\SL_2(F))\otimes \pi^{\vee})_{\SL_2} \cong \pi
 \] 
 as $(\GL_2\times \GL_2)^{\det}/\SL_2\cong \GL_2$-modules. 
 \end{prop}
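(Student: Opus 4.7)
The plan is to construct the isomorphism explicitly through an integration map and then identify the residual $\GL_2$-action on the result.

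First, I would observe that $(\GL_2\times\GL_2)^{\det}$ acts transitively on $\SL_2$ via $(g,h)\cdot x = gxh^{\top}$, with the stabilizer of $1$ being the subgroup $H=\{(g,(g^{\top})^{-1}):g\in\GL_2\}\cong \GL_2$. Since both $(\GL_2\times\GL_2)^{\det}$ and $H$ are unimodular, the Haar measure $dx$ on $\SL_2$ is invariant under the full $(\GL_2\times\GL_2)^{\det}$-action. I would then define
\[
\Psi: C_c(\SL_2)\otimes \pi^{\vee} \longrightarrow \pi^{\vee}, \qquad \Psi(f\otimes v^{\vee}) = \int_{\SL_2} f(x)\, \pi^{\vee}((x^{\top})^{-1})\, v^{\vee}\, dx,
\]
and verify via the substitution $y = x(h^{\top})^{-1}$ (for $h$ in the second $\SL_2$-factor) that $\Psi$ is invariant under the diagonal $\SL_2$-action, and hence descends to the coinvariants. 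Surjectivity is immediate from taking $f$ supported near $1\in\SL_2$, and bijectivity on the coinvariants reduces to the standard fact that for any smooth $G$-module $V$, the diagonal coinvariants $(C_c(G)\otimes V)_G$ under the right regular $G$-action on $C_c(G)$ are canonically isomorphic to $V$.

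Next, I would identify the residual $\GL_2$-action from the quotient $(\GL_2\times\GL_2)^{\det}/\SL_2\cong \GL_2$. For $g\in\GL_2$ I pick any lift $(g,h_g)$ with $\det h_g = (\det g)^{-1}$; the substitution $y = g^{-1}x(h_g^{\top})^{-1}$ in $\Psi((g,h_g)\cdot(f\otimes v^{\vee}))$ produces
\[
\Psi((g,h_g)\cdot(f\otimes v^{\vee})) = \pi^{\vee}((g^{\top})^{-1})\, \Psi(f\otimes v^{\vee}),
\]
with the dependence on $h_g$ cancelling inside the integrand. Hence the first $\GL_2$ acts on $\pi^{\vee}$ through the involution $\iota(g) = (g^{\top})^{-1}$. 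To recognise this as $\pi$, I would invoke the $\GL_2$-specific identity $(g^{\top})^{-1} = (\det g)^{-1}\, w_0 g w_0^{-1}$, giving $\pi^{\vee}((g^{\top})^{-1}) = \omega_{\pi}(\det g)\cdot \pi^{\vee}(w_0 g w_0^{-1})$, which is a $\GL_2$-representation isomorphic to $\pi^{\vee}\otimes \omega_{\pi}$. Since $\pi^{\vee}\cong \pi\otimes\omega_{\pi}^{-1}$ for every irreducible $\GL_2(F)$-representation, this twists back precisely to $\pi$.

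The most delicate point I anticipate is the bookkeeping of the various twists---the transpose appearing in the orbit action, the central character produced by the $\GL_2$-form of $\iota$, and the $\GL_2$ self-duality twist---which must conspire so that the final answer is exactly $\pi$ and not a character twist of it. The unimodularity of both $G$ and $H$ is what ensures the absence of a Jacobian in the change of variables, and this is also essential for the identification to land on the nose.
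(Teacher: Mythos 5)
Your proof is correct and follows essentially the same route as the paper: an explicit integration map from $C_c(\SL_2)\otimes\pi^{\vee}$ together with the standard fact that the regular representation satisfies $(C_c(G)\otimes V)_G\cong V$. The only difference is cosmetic: the paper's map $(f,v)\mapsto\pi(f)v$ lands directly in $\pi$ (with bijectivity checked by decomposing $\pi|_{\SL_2}$ into irreducibles), whereas yours lands in $\pi^{\vee}$ and you then carry out explicitly the untwisting $\pi^{\vee}\circ\iota\cong\pi^{\vee}\otimes\omega_{\pi}\cong\pi$ that the paper leaves implicit.
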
 
  \begin{proof} We have a natural $\GL_2$-intertwining map 
  \[ 
  (C_c(\SL_2) \otimes \pi^{\vee})_{\SL_2} \rightarrow \pi 
  \] 
  given by $(f,v) \mapsto \pi(f)v$.  To prove that this is bijective, restrict $\pi=\pi_1 + \cdots + \pi_s$ to $\SL_2$, and use 
  \[ 
  (C_c(\SL_2) \otimes \pi^{\vee}_i)_{\SL_2} \cong  \pi_i
  \] 
  for any irreducible representation $\pi_i$ of $\SL_2$.  
  
  \end{proof}

 \subsection{Three $\SL_2$} Observe that $V\otimes V\otimes V$ is naturally a symplectic space. Hence 
 \[ 
 \SL_2 \times \SL_2 \times \SL_2 /\mu_2^2 \subset \Sp_8
 \] 
 Fix an additive character  $\psi$ of $F$. It gives us 
 a Whittaker datum $(U,\psi)$ for $\SL_2$, as well as a Weil representation $\omega_{\psi}$ of $\Sp_8$. By restriction, $\omega_{\psi}$ is an $S_3$-module. 
 Decomposing $\omega_{\psi}$ under the action of $S_3$ is closely related to the classical theta correspondence, where 
(any) two $\SL_2$ are the factors of $\SO_4 \cong (\GL_2 \times \GL_2)^{\det}/\Delta({\mathbb G}_m)$  and the 
 remaining $\SL_2$ is a symplectic group.

 We realize $\omega_{\psi}$, as a Schroedinger model, on the space of compactly supported smooth functions $C_c(M_2(F))$, 
 with the geometric action of $(\GL_2 \times \GL_2)^{\det}$, while the action of the third  $\SL_2$ is generated by 
 \[ 
 u\cdot f(x) = \psi(u\det(x))f(x), 
 \] 
 for $u\in U$,  and a Fourier transform. 
 Namely, if $w$ is the non-trivial Weyl group element of the third $\SL_2$ and $\phi \in C_c(M_2(F))$, then, by \cite{Kudla1}, we have
 \[\omega_{\psi}(w)(\phi)(x)=\int_{M_2(F)}\psi(\langle  x,y\rangle)\phi(-y) ~d y=-\mathcal{F}(\phi)(-x).\]
From \cite[Lemma 2.2]{MS} and the description of the action of $U$ for the third $\SL_2$, it follows at once that 
 \[ 
 (\omega_{\psi})_{U,\psi} \cong C_c(\SL_2) 
 \] 
 as $(\GL_2 \times \GL_2)^{\det}$-modules.

  \begin{prop} \label{P:theta_3} 
 Let $\pi$ be an irreducible, Whittaker generic, representation of $\GL_2(F)$. Then 
 \[ 
\Theta(\pi):= (\omega_{\psi}\otimes \pi^{\vee})_{\SL_2} \cong \pi\otimes \pi
 \] 
 as $(G_2\times \GL_2)^{\det}/\SL_2\cong G_2$-modules. 
 \end{prop}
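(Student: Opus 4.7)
The plan is to follow the template of Proposition \ref{P:theta_2}, now using the computation $(\omega_\psi)_{U,\psi} \cong C_c(\SL_2(F))$ established in the preceding paragraph to extract both factors of $\pi \otimes \pi$. The Whittaker-genericity hypothesis enters precisely to match the $U$-characters appearing on the two tensor factors.

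First I would reduce the $\SL_2$-coinvariants of $\omega_\psi \otimes \pi^\vee$ to a calculation supported on the open locus $\{\det x = 1\} \subset M_2(F)$. For $u \in U$ in the Borel $B=TU$ of the third $\SL_2$, the joint action on $\omega_\psi \otimes \pi^\vee$ is $\psi(u \det x)\otimes \pi^\vee(u)$. Since $\pi$ is Whittaker generic, $\pi^\vee$ has a one-dimensional $(U,\psi^{-1})$-twisted Jacquet module, and pairing against it, the two $U$-characters combine trivially precisely when $\det x = 1$. A Mackey-style analysis using $\SL_2 = B \sqcup BwB$, together with the verification that the rank-$\leq 1$ stratum of $M_2(F)$ contributes nothing to the coinvariants (the characters on the degenerate locus fail to match against a generic Whittaker functional), yields a natural isomorphism
\[
(\omega_\psi \otimes \pi^\vee)_{\SL_2} \;\cong\; (C_c(\SL_2(F)) \otimes \pi^\vee)_{\SL_2},
\]
where on the right the $\SL_2$ is identified with the copy sitting inside the second $\GL_2$-factor of $(\GL_2 \times \GL_2)^{\det}$, acting on $C_c(\SL_2(F))$ by right translation.

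Second, I would invoke Proposition \ref{P:theta_2} on the right-hand side, obtaining $\pi$ as a representation of the first $\GL_2$. The second factor of $\pi$ in $\pi \otimes \pi$ appears on the second $\GL_2$ via the parallel left-translation convolution: concretely, the Whittaker model of $\pi^\vee$ recombined with $C_c(\SL_2(F))$ yields the Whittaker/Kirillov realization of $\pi$ on the second $\GL_2$, matching $W(\pi) \otimes W(\pi) \cong \pi \otimes \pi$ viewed as functions on $(U \backslash \GL_2)^2$, and the whole answer descends modulo $\Delta \bbG_m$ to the required $G_2$-module.

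The main technical obstacle is the first step, specifically the verification that no filtration pieces of $\omega_\psi$ corresponding to the rank-$\leq 1$ stratum of $M_2(F)$ contribute to $(\omega_\psi \otimes \pi^\vee)_{\SL_2}$. This uses Whittaker-genericity of $\pi$ essentially, ensuring that any degenerate character matching on $\{\det x = 0\}$ fails against the unique Whittaker functional of $\pi^\vee$; the sensitivity of the answer to the precise class of $\pi$ is already visible in Proposition \ref{P:mini_principal}, which excludes the Steinberg case, and gives a useful check on the setup.
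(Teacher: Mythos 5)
Your first step is where the argument breaks down, and it breaks down in a way that bypasses the two ideas the proof actually needs. The object to be computed is the coinvariant space $(\omega_{\psi}\otimes\pi^{\vee})_{\SL_2}$ for the \emph{full} third $\SL_2$; its dual is $\Hom_{\SL_2}(\omega_{\psi},\pi)$, and such a space cannot be computed by pairing the $U$-action $\psi(u\det x)$ against the Whittaker functional of $\pi^{\vee}$ --- that only sees $\Hom_U(\omega_{\psi},\psi)=\bigl((\omega_{\psi})_{U,\psi}\bigr)^{*}$, which is enormous, and the remaining equivariance is not recovered by a ``Mackey analysis'' of $\SL_2=B\sqcup BwB$, because $\omega_{\psi}$ is not induced from $B$ (the Weyl element acts by Fourier transform, which does not preserve the locus $\det x=1$). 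Moreover, your intermediate isomorphism silently transports $\pi^{\vee}$ from the third $\GL_2$-factor to the second one; nothing in your reduction justifies this. What the paper does instead is exploit the symmetry of the three $\SL_2$'s inside $\Sp_8$: it re-chooses the Schr\"odinger model so that one of the two \emph{target} $\SL_2$'s plays the symplectic role, whence $(\omega_{\psi})_{U,\psi}\cong C_c(\SL_2)$ for the Whittaker datum of that target factor, and Proposition \ref{P:theta_2} then identifies the $(U,\psi)$-Jacquet module of $\Theta(\pi)$ along each target factor with $\pi$. That symmetry step is the engine of the proof and is absent from your proposal.

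The second gap is your assertion that the degenerate stratum ``contributes nothing.'' Even after the Whittaker--Jacquet modules along both target factors are computed, one only knows that $\pi\otimes\pi$ is a subquotient of $\Theta(\pi)$ and that every other subquotient is non-generic on both factors; ruling these out is the delicate part. The paper must split off a possible trivial $\SL_2\times\SL_2$-subquotient using the Bernstein center, treat the Steinberg case separately via the K\"unneth vanishing $\mathrm{Ext}^1_{\SL_2\times\SL_2}(1\otimes 1,\St\otimes\St)=0$, and finally reach a contradiction with the fact that the small theta lift of $1_{\SL_2}$ is $1_{\SL_2^2}$. None of this follows from genericity of $\pi$ alone, so the proposal as written does not close the proof.
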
 
 \begin{proof} Let $\SL_2$ be one of two in $\GL_2$. 
 Without loss of generality we can take the Schroedinger model of $\omega_{\psi}$ such that this $\SL_2$ plays the role of the symplectic group. 
 Since 
 \[ 
 \Theta(\pi)_{U,\psi} \cong ((\omega_{\psi})_{U,\psi} \otimes \pi^{\vee})_{\SL_2} \cong (C_c(\SL_2(F))\otimes \pi^{\vee})_{\SL_2} \cong \pi 
 \] 
 where the last isomorphism is Proposition \ref{P:theta_2}.  Now we can repeat the argument with the other $\SL_2$ in $G_2$. It follows that 
 $\pi\otimes \pi$ is a subquotient of $\Theta(\pi)$, and all other subquotients are non-generic on both factors, that is, a trivial representation when 
 restricted to $\SL_2 \times \SL_2$.  Working with three $\SL_2$, the trivial representation will split off of $\Theta(\pi)$ using the action of the Bernstein center, 
  unless $\pi$ is  Steinberg. In this case, it is a priori possible that $\Theta(\St)$ has $\St\otimes \St$ as a quotient and $1\otimes 1$ as the next subquotient. However, by 
 K\"uneth formula, 
 \[ 
 \mathrm{Ext}^1_{\SL_2 \times \SL_2}(1\otimes 1, \St\otimes \St) = 0 
 \] 
 and $1\otimes 1$ must be an $\SL_2^2$-invariant quotient of  $\Theta(\St)$.  Thus in all cases, if $\Theta(\pi)$ is larger than $\pi\otimes\pi$, then 
 $1\otimes 1 \otimes \sigma$ is  a quotient of $\omega_{\psi}$,  where $\sigma$ is an $\SL_2$-summand of $\pi$, a contradiction to the 
 fact that the small theta lift of the trivial representation of $\SL_2$ is the trivial representation of $\SL_2^2$ (\cite{Kudla1},\cite{BH_theta}).  
 \end{proof}

\subsection{Four $\SL_2$}  
 We have 
 \[ 
\SL_2\times \SL_2\times \SL_2\times \SL_2 /\mu_2^3 
 \] 
 in adjoint $D_4$. Let $\Pi$ be the minimal representation of $D_4$. Let $(U,\psi)$ be the 
 Whittaker datum for any one $\SL_2$. Then 
 \[ 
 \Pi_{U,\psi} =\omega_{\psi} 
 \]

  \begin{prop} \label{P:theta_4} 
 Let $\pi$ be an irreducible, Whittaker generic, representation of $\GL_2(F)$. Then 
 \[ 
\Theta(\pi):= (\Pi\otimes \pi^{\vee})_{\SL_2} \cong \pi\otimes \pi \otimes \pi 
 \] 
 as $(G_3\times \GL_2)^{\det}/\SL_2\cong G_3$-modules. 
 \end{prop}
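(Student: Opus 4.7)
The plan is to mimic the proof of Proposition \ref{P:theta_3} one level up, using the relation $\Pi_{U,\psi} \cong \omega_\psi$ recalled just before the statement to bootstrap from the three-fold to the four-fold case. Label the four root $\SL_2$'s in (adjoint) $D_4$ as $\SL_2, \SL_2^{(1)}, \SL_2^{(2)}, \SL_2^{(3)}$, where the first is paired with $\pi$ and the remaining three generate $\SL_2^3/\mu_2^2$. Let $U_i$ denote the upper-triangular unipotent subgroup of $\SL_2^{(i)}$.

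First I would fix $i \in \{1,2,3\}$ and apply the twisted Jacquet functor $(\cdot)_{U_i,\psi}$ to $\Theta(\pi) = (\Pi \otimes \pi^\vee)_{\SL_2}$. Since $U_i$ and $\SL_2$ commute inside $D_4$, the twisted $U_i$-coinvariants commute with $\SL_2$-coinvariants, and the identity $\Pi_{U_i,\psi} \cong \omega_\psi$ yields
\[ \Theta(\pi)_{U_i,\psi} \cong (\omega_\psi \otimes \pi^\vee)_{\SL_2} \cong \pi \otimes \pi, \]
the last isomorphism being Proposition \ref{P:theta_3}, where $\SL_2^{(j)}$ and $\SL_2^{(k)}$ (with $\{j,k\} = \{1,2,3\}\setminus\{i\}$) play the role of the two $\SL_2$'s in the $G_2$ of that proposition, and the paired $\SL_2$ plays the role of the coinvariant $\SL_2$. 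Since $\pi$ is Whittaker generic, this matches the $(U_i,\psi)$-Jacquet module of $\pi \otimes \pi \otimes \pi$.

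Running this for each $i$, I would conclude that $\pi \otimes \pi \otimes \pi$ occurs as an irreducible subquotient of $\Theta(\pi)$ (with multiplicity one by comparing Jacquet modules), and that any other irreducible subquotient must be non-generic with respect to $U_i$ for each $i$, hence trivial when restricted to each $\SL_2^{(i)}$, and therefore trivial on $\SL_2^3/\mu_2^2$.

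The main obstacle, exactly as in Proposition \ref{P:theta_3}, is ruling out a trivial subquotient. If $\pi$ is not Steinberg, then $\pi\otimes\pi\otimes\pi$ and $\bfone\otimes\bfone\otimes\bfone$ lie in distinct Bernstein components of $\SL_2^3$, so any trivial subquotient splits off as a direct summand. If $\pi = \St$, K\"unneth's formula for $\mathrm{Ext}^1$ combined with $\Hom_{\SL_2}(\bfone, \St) = 0$ gives
\[ \mathrm{Ext}^1_{\SL_2^3}(\bfone \otimes \bfone \otimes \bfone,\, \St \otimes \St \otimes \St) = 0, \]
so again the trivial must split off as a quotient. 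In either case, a trivial quotient of $\Theta(\pi)$ would produce a quotient $\bfone \otimes \bfone \otimes \bfone \otimes \sigma$ of $\Pi$ for $\sigma$ some $\SL_2$-summand of $\pi$, contradicting the fact that the theta lift of the trivial representation of $\SL_2^3/\mu_2^2$ is the trivial representation of $\SL_2$ (the $D_4$ analogue of the classical inputs from \cite{Kudla1} and \cite{BH_theta} invoked in the proof of Proposition \ref{P:theta_3}).
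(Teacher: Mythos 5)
Your proposal follows essentially the same route as the paper's proof: apply the twisted Jacquet functor $(\cdot)_{U_i,\psi}$ with respect to each of the three $\SL_2$'s, use $\Pi_{U_i,\psi}\cong\omega_\psi$ and Proposition \ref{P:theta_3} to identify $\Theta(\pi)_{U_i,\psi}\cong\pi\otimes\pi$, conclude that any extra subquotient is trivial on $\SL_2^3$, and eliminate it via the Bernstein center (non-Steinberg case) or the K\"unneth vanishing of $\mathrm{Ext}^1_{\SL_2^3}(\bfone^{\otimes 3},\St^{\otimes 3})$ (Steinberg case). The only cosmetic difference is the final citation: the paper derives the contradiction from Proposition \ref{P:mini_principal} (the lift of the trivial representation of one $\SL_2$ to $\SL_2^3$ is trivial), which is precisely the ``$D_4$ analogue of the classical inputs'' you invoke.
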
 
 \begin{proof} 
 Let $(U,\psi)$ be the Whittaker datum for any of the three $\SL_2$ in $G_3$.  Then 
 \[ 
 \Theta(\pi)_{U,\psi} \cong (\Pi_{U,\psi} \otimes \pi^{\vee})_{\SL_2} \cong (\omega_{\psi}\otimes \pi^{\vee})_{\SL_2} \cong \pi \otimes \pi 
 \] 
 where the last isomorphism is Proposition \ref{P:theta_3}.  It follows that 
 $\pi\otimes \pi \otimes \pi$ is a subquotient of $\Theta(\pi)$, and all other subquotients are non-generic on all factors, that is, a trivial representation when 
 restricted to $\SL_2 ^3$.  The trivial representation will split off of $\Theta(\pi)$ using the action of the Bernstein center, unless $\pi$ is 
 Steinberg. In this case, it is a priori possible that $\Theta(\St)$ has $\St \otimes \St \otimes \St$ as a quotient and $1\otimes 1\otimes 1 $ as the next subquotient. However, by 
 K\"uneth formula, 
 \[ 
 \mathrm{Ext}^1_{\SL_2 \times \SL_2\times \SL_2}(1\otimes 1 \otimes 1, \St\otimes \St\otimes \St) = 0 
 \] 
 and $1\otimes 1\otimes 1$ is a quotient of $\Theta(\St)$.  Thus in all cases, if $\Theta(\pi)$ is larger than $\pi\otimes\pi\otimes\pi$, then 
 $1\otimes 1 \otimes 1\otimes \sigma$ is  a quotient of $\omega_{\psi}$,  where $\sigma$ is an $\SL_2$-summand of $\pi$,
  a contradiction to Proposition \ref{P:mini_principal} which implies that 
 the small theta lift of the trivial representation of one $\SL_2$ is the trivial representation of $\SL_2^3$. 
 \end{proof}  
 
 Now we state a consequence of Proposition \ref{P:theta_4}  that we shall need later. In the adjoint $D_4$ we have a dual pair 
 \[ 
 \SL_2 \times (\SL_2 \times  \SL_2\times \SL_2 /\mu_2^2). 
 \] 
 If $\sigma$ is an irreducible representation of $\SL_2$ let 
 \[ 
  \mathrm{mini}(\sigma) =(\Pi\otimes{\sigma^{\vee}})_{\SL_2} 
  \] 
  be its theta lift to $(\SL_2  \times \SL_2\times \SL_2 /\mu_2^2)$. The correspondence described in Proposition \ref{P:theta_4} is a version of 
  this lift for groups of similitudes, as explained in   \cite{BGS_similitudes}.   
We are also using a simple observation that if the  big theta lift for similitudes correspondence is irreducible, then the big theta lift for the original correspondence is 
certainly semi-simple, so also irreducible, by Theorem 5.1 of \cite{BGS_similitudes};  see Lemma 5.3. of \cite{BGS_similitudes}.    Thus we have the following: 
  
  \begin{prop} \label{P:mini_theta} 
  Let $\pi$ be an irreducible, Whittaker generic, representation of $\GL_2(F)$. Let $\sigma$ be an irreducible $\SL_2$-summand of $\pi$. Then 
 $ \mathrm{mini}(\sigma)$ 
  is an $(\SL_2  \times \SL_2\times \SL_2 /\mu_2^2)$-irreducible summand of $\pi\otimes\pi\otimes\pi$. As $\sigma$ runs over all summands of $\pi$, 
  $\mathrm{mini}(\sigma)$ runs over all irreducible summands of $\pi\otimes\pi\otimes\pi$.  
 \end{prop}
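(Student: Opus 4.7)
The plan is to descend the similitudes-version computation from Proposition~\ref{P:theta_4} to the original isometry dual pair $\SL_2 \times (\SL_2^3/\mu_2^2)$ via the Gan--Savin type machinery of \cite{BGS_similitudes}. First, since $\pi$ is an irreducible smooth representation of $\GL_2(F)$, the external tensor product $\pi\otimes\pi\otimes\pi$ is irreducible as a $\GL_2^3$-module, hence as a $G_3$-module. By Proposition~\ref{P:theta_4}, this irreducible $G_3$-module is exactly the big theta lift $(\Pi\otimes\pi^{\vee})_{\SL_2}$ in the similitudes correspondence. Applying Theorem~5.1 together with Lemma~5.3 of \cite{BGS_similitudes}, irreducibility of the similitude big theta lift forces the big theta lift in the isometry correspondence to be semisimple and, when nonzero, irreducible. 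Thus for each irreducible summand $\sigma$ of $\pi|_{\SL_2}$, $\mathrm{mini}(\sigma)=(\Pi\otimes\sigma^{\vee})_{\SL_2}$ is either zero or irreducible as an $(\SL_2^3/\mu_2^2)$-module.

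Next I would identify the summands. Write $\pi|_{\SL_2}=\bigoplus_i \sigma_i$ as a (finite multiplicity-free) direct sum of distinct irreducibles. Dualizing gives $\pi^{\vee}|_{\SL_2}=\bigoplus_i \sigma_i^{\vee}$, and since the coinvariant functor $(-)_{\SL_2}$ is additive and tensoring with $\Pi$ commutes with direct sums, we obtain
\[
\pi\otimes\pi\otimes\pi \;\cong\; (\Pi\otimes\pi^{\vee})_{\SL_2} \;=\; \bigoplus_i (\Pi\otimes\sigma_i^{\vee})_{\SL_2} \;=\; \bigoplus_i \mathrm{mini}(\sigma_i)
\]
as $(\SL_2^3/\mu_2^2)$-modules, where the first isomorphism is Proposition~\ref{P:theta_4} restricted from $G_3$ to the subgroup $\SL_2^3/\mu_2^2$. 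In particular each $\mathrm{mini}(\sigma_i)$ is nonzero (otherwise the corresponding piece of $\pi\otimes\pi\otimes\pi$ would vanish), hence irreducible by the previous paragraph, and they jointly exhaust the $(\SL_2^3/\mu_2^2)$-summands of $\pi\otimes\pi\otimes\pi$. Distinctness of the $\mathrm{mini}(\sigma_i)$ follows since one recovers $\sigma_i$ by lifting back (the lift $\SL_2\leftarrow (\SL_2^3/\mu_2^2)$ picks out a single summand via the Whittaker datum, as in the proof of Proposition~\ref{P:theta_4}).

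The main obstacle is step one: checking that the hypotheses of the similitudes-to-isometries transfer theorems in \cite{BGS_similitudes} apply to the exceptional setting here. The pair is not classical, so one must verify that the version of the descent used in \cite{BGS_similitudes}, Theorem~5.1 and Lemma~5.3, covers the minimal representation of adjoint $D_4$ together with the dual pair $\GL_2\times G_3$ embedded via the center-quotient construction. Once this is in place, the additive decomposition of the coinvariant functor carries essentially no content, and the proposition follows by splicing Proposition~\ref{P:theta_4} with the descent. The auxiliary claim about bijectivity (as $\sigma$ runs over summands of $\pi$) is then automatic from the uniqueness of the direct sum decomposition of the irreducible $\GL_2^3$-module $\pi\otimes\pi\otimes\pi$ under restriction to $\SL_2^3/\mu_2^2$.
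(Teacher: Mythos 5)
Your proposal is correct and follows essentially the same route as the paper: the paper derives Proposition~\ref{P:mini_theta} directly from Proposition~\ref{P:theta_4} together with the similitudes-to-isometries descent of Theorem~5.1 and Lemma~5.3 of \cite{BGS_similitudes} (irreducibility of the similitude big theta lift forces semisimplicity, hence irreducibility, of the isometry lift), exactly as you do. Your explicit additive decomposition of the $\SL_2$-coinvariants over the summands $\sigma_i$ of $\pi|_{\SL_2}$ is a reasonable spelling-out of what the paper leaves implicit.
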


  \section{Computing parameters of lifts of tempered representations} \label{S:parameters} 
 
 Here we determine $\Theta(\sigma)$ for supercuspidal $\sigma$ using a ``tower'' of theta correspondences, starting with the correspondence in 
 $D_4$, Proposition \ref{P:mini_theta} as the base case. To that end, let $\mathscr G_i$, $i=4, \ldots ,8$  denote one of the following 
 simply connected  Chevalley groups in the sequence 
 \[ 
 D_4 < D_5 < E_6 < E_7 < E_8 
 \] 
 where $i$ denotes the rank of the group.  
 Let $\mathscr P_i=\mathscr M_i \mathscr N_i $ be the  (standard) maximal parabolic in $\mathscr G_i$ such that $[\mathscr M_i, \mathscr M_i]=\mathscr G_{i-1}$.  
 We can arrange the dual pair $\SL_2 \times G_i$ in $\mathscr G_i$ such that 
$\SL_2$ is contained in the Levi $\mathscr M_i$. Let $P_i=M_iN_i$ be the centralizer of $\SL_2$ in $\mathscr P_i$. It is a maximal parabolic subgroup in $G_i$. 
Let $\bar {\mathscr N}_i$ and $\bar N_i$ denote the unipotent radical of respective opposite parabolic subgroups. Then $V_i={\bar{\mathscr N}_i}/ \bar N_i$ is abelian, and thus 
a module for $\SL_2 \times M_i$.  One checks that  $V_i=V_2 \otimes V_{M_i}$  for an $M_i$-module $V_{M_i}$.  
The cases are given by the following table, where the factor $\SL_2'$ acts trivially on $V_4$. The meaning of $r_i$ and $t_i$ will be explained below.
 \[ 
 \begin{array}{c||c|c|c|c} 
 \mathscr G_i & D_5 &  E_6 & E_7 & E_8  \\ \hline 
\mathscr M_i  & D_4 & D_5  & E_6  & E_7  \\ \hline 
M^{\mathrm {der}}_i &  \SL_2' \times  \SL_2''\times \SL_2'''& \SL_2'\times \SL_4  & \SL_6 &  \Spin_{12}  \\ \hline 
 V_{M_i} & 2 V'_2  &  V_4  & \wedge^2 V_6 &  V_{12}  \\  \hline
 t_i & 1 & 3/2 & 2 & 3\\ \hline
 r_i& -1 & -3/2 & -3 & -11/2\\
 \end{array}. 
 \] 
 
 Let $\varpi_{i}$ be the fundamental weight for $\mathcal G_i$ with respect to the maximal parabolic $\mathscr P_i$. Let 
 \[ 
 |\cdot |_{\mathscr M_i}: \mathscr M_i \rightarrow \mathbb R^+ 
 \] 
 be the positive character such that for any $\mathscr G_i$ co-root $\alpha^{\vee} : \mathbb G_m \rightarrow \mathscr M_i$ we have 
 \[ 
  |\alpha^{\vee}(t) |_{\mathscr M_i}= |t|^{\varpi_{i} (\alpha^{\vee})} 
 \] 
 
 Let $\Omega_i\subset V_i$ be the $\SL_2 \times M_i$-orbit consisting of highest weight vectors. 
 By Theorem 1.1 and Theorem 6.1 in  \cite{MS},  we have an exact sequence of $\SL_2 \times M_i$-modules 
 \[ 
 0\rightarrow C_c(\Omega_i) \rightarrow \Pi_{N_i} \rightarrow \Pi_{\mathscr N_i} \rightarrow 0, 
 \] 
 where 
\[ 
\Pi_{\mathscr N_i}=\Pi[\mathscr M_i] \otimes |\cdot|_{\mathscr M_i} ^{t_i} \oplus \mathbb C \otimes   |\cdot|_{\mathscr M_i} ^{s_i} 
\] 
 a direct sum of the minimal representation and the trivial representation of $\mathscr M_i$,  twisted by the powers of $|\cdot|_{\mathscr M_i}$ where 
 $t_i$ is  given in the  table  above. We note that our $s_i$ and $t_i$ are obtained form corresponding $s$ and $t$ in \cite{MS} by dividing by 
 lengths of fundamental characters $\varpi_i$.  
  The action of  $\SL_2 \times M_i$ on $C_c(\Omega_i)$ is geometric twisted by the character $|\cdot|_{\mathscr M_i} ^{s_i}$. 
  
  \begin{prop} \label{P:induction}  
  Let $\sigma$ be an irreducible, supercuspidal representation of $\SL_2$. Let $\Theta_i(\sigma)$ be its lift to $G_i$.  Then, as $M_i$-modules, 
  \[ 
  r_{P_i} (\Theta_i(\sigma)) \cong (\Pi[\mathscr M_i]\otimes \sigma^{\vee})_{\SL_2} \otimes |\cdot|_{M_i} ^{r_i} 
  \] 
  where $|\cdot|_{M_i}$ is defined analogously to $|\cdot|_{\mathscr{M}_i}$ but with respect to $G_i$ and $r_i$ is given in the table above.
   \end{prop}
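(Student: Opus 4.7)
The plan is to apply the exact functor $(\cdot \otimes \sigma^\vee)_{\SL_2}$ to the short exact sequence of $\SL_2 \times M_i$-modules
\[
0 \to C_c(\Omega_i) \to \Pi_{N_i} \to \Pi_{\mathscr{N}_i} \to 0
\]
recalled from \cite{MS}. Since $N_i$ lies in $G_i$, which is centralized by $\SL_2$ in $\mathscr{G}_i$, the functors of $N_i$-coinvariants and $(\cdot \otimes \sigma^\vee)_{\SL_2}$ commute, producing a two-step filtration of $\Theta_i(\sigma)_{N_i}$ as an $M_i$-module. The strategy is to show that both the sub $(C_c(\Omega_i) \otimes \sigma^\vee)_{\SL_2}$ and the trivial summand of the quotient vanish, leaving only the minimal-representation summand, and then to absorb the shift from the raw $N_i$-coinvariant to the normalized Jacquet functor $r_{P_i} = \delta_{P_i}^{-1/2}\cdot (\cdot)_{N_i}$.

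The trivial summand $\mathbb{C} \otimes |\cdot|_{\mathscr{M}_i}^{s_i}$ restricts to the trivial $\SL_2$-module (as $\SL_2$ admits no non-trivial characters), so its contribution is $(\sigma^\vee)_{\SL_2}$, which vanishes for any irreducible $\sigma \neq 1_{\SL_2}$. For the sub, pick a base point $v_0 = v_2^0 \otimes v_M^0 \in V_i = V_2 \otimes V_{M_i}$ with both factors highest weight vectors. Since $\SL_2$ acts only on the $V_2$ factor and its stabilizer of $v_2^0$ is the unipotent subgroup $U$, the $U$-fixed slice in $\Omega_i$ is $v_2^0 \otimes (M_i \cdot v_M^0) \cong M_i \cdot v_M^0$ (the $|\cdot|_{\mathscr{M}_i}^{s_i}$ twist is again trivial on $U \subset \SL_2$). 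This identifies
\[
C_c(\Omega_i) \cong \ind_U^{\SL_2}(C_c(M_i \cdot v_M^0))
\]
as $\SL_2$-modules with $U$ acting trivially on the slice. The projection formula for compact induction then gives
\[
(C_c(\Omega_i) \otimes \sigma^\vee)_{\SL_2} \cong C_c(M_i \cdot v_M^0) \otimes (\sigma^\vee)_U = 0,
\]
since $(\sigma^\vee)_U$ is the Jacquet module of the supercuspidal $\sigma^\vee$.

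It remains to handle the surviving term and to collect normalizations. Since $|\cdot|_{\mathscr{M}_i}^{t_i}$ factors through $\mathscr{M}_i/\mathscr{M}_i^{\mathrm{der}}$, it yields a character of $M_i$ commuting with the $\SL_2$-coinvariant, so the surviving term equals $(\Pi[\mathscr{M}_i]\otimes \sigma^\vee)_{\SL_2}\otimes |\cdot|_{\mathscr{M}_i}^{t_i}|_{M_i}$. Dividing by $\delta_{P_i}^{1/2}$ to obtain $r_{P_i}$ then leaves the total twist $|\cdot|_{\mathscr{M}_i}^{t_i} \cdot \delta_{P_i}^{-1/2}$ restricted to $M_i$. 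The exponent $r_i$ in the table is defined precisely to satisfy
\[
|\cdot|_{M_i}^{r_i} = |\cdot|_{\mathscr{M}_i}^{t_i}\cdot \delta_{P_i}^{-1/2} \quad \text{on } M_i,
\]
so verification is a direct computation in the character lattice, carried out by evaluating both sides on the central cocharacter of $M_i$ and comparing $\varpi_i$ with the half-sum of roots in $N_i$. The main obstacle is this final character bookkeeping: one must track four different root systems with two distinct contributions (the $t_i$ from \cite{MS} and the modular $\delta_{P_i}^{1/2}$ of the normalized Jacquet functor) and confirm that they assemble into the tabulated $r_i$; once a uniform cocharacter is chosen, the arithmetic is routine root-datum calculation.
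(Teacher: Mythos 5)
Your proof is correct and follows essentially the same route as the paper: apply the filtration $0\to C_c(\Omega_i)\to\Pi_{N_i}\to\Pi_{\mathscr N_i}\to 0$ from \cite{MS}, kill the bottom piece using supercuspidality of $\sigma$ together with the fact that points of $\Omega_i$ have $\SL_2$-stabilizer conjugate to $U$ (the paper does this by citing the $\Pi_1/\Pi_2$ argument in Proposition \ref{P:cuspidal_support}), discard the trivial summand of $\Pi_{\mathscr N_i}$, and match characters exactly as the paper does. One minor imprecision: the identification $C_c(\Omega_i)\cong\ind_U^{\SL_2}(C_c(M_i\cdot v_M^0))$ is not literally correct because of the scaling ambiguity $v\otimes w=(\lambda v)\otimes(\lambda^{-1}w)$ in $V_2\otimes V_{M_i}$ --- the induction is really from $B$, or one argues one $\SL_2$-orbit at a time as the paper does --- but either fix still yields $(C_c(\Omega_i)\otimes\sigma^{\vee})_{\SL_2}=0$ via $(\sigma^{\vee})_U=0$, so nothing is lost.
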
 
   \begin{proof} 
   The proposition follows from the filtration of $\Pi_{N_i}$, if we  can show that  $\sigma$ does not appear as a quotient of the bottom piece $C_c(\Omega_i)$.  
   But this is done exactly as in Proposition \ref{P:cuspidal_support}, (the discussion of the sub quotient $\Pi_1/\Pi_2$ there). As far as figuring out $r_i$ goes, we note 
   that the restriction of $|\cdot|_{\mathscr{M}_i}$ to $M_i$ is precisely $|\cdot|_{{M}_i}$ and then it remains to divide with the modular character $\delta_{P_i}^{1/2}$ to 
   determine $r_i$. 
 
   \end{proof}

\vskip 10pt 

We realize the root system $E_7$ siting in $\mathbb R^8$, in the standard fashion, where the simple roots are  
 \[ 
 \alpha_1=\frac12(e_1+e_8)-\frac12(e_2+e_3+e_4+e_5+e_6+e_7), ~\alpha_2=e_1+e_2, 
 \] 
 \[ 
  \alpha_3=e_2-e_1, ~\alpha_4=e_3-e_2, ~\alpha_5=e_4-e_3, ~\alpha_6=e_5-e_4, ~\alpha_7=e_6-e_5. 
 \] 
 
The following is the Dynkin diagram of $E_7$.

\begin{picture}(200,120)(-120,-15)

\put(79,73){\line(0,-1){30}}
\put(79,40){\circle*{6}}
\put(154,76){\line(1,0){30}}
\put(187,76){\circle*{6}}

\put(74,29){$\alpha_2$}

\put(02,82){$\alpha_1$}

\put(38,82){$\alpha_3$}

\put(74,82){$\alpha_4$}

\put(110,82){$\alpha_5$}

\put(146,82){$\alpha_6$}

\put(182,82){$\alpha_7$}

\put(07,76){\circle{6}}
\put(10,76){\line(1,0){30}}

\put(43,76){\circle{6}}
\put(46,76){\line(1,0){30}}
\put(79,76){\circle{6}}

\put(82,76){\line(1,0){30}}
\put(115,76){\circle*{6}}

\put(118,76){\line(1,0){30}}
\put(151,76){\circle{6}}

\end{picture}

We consider  $G_i$, for $i=6,7,8$ to be in the sequence $A_5 < D_6 <E_7$ where $A_5$ is picked to contain the three black dots.  
Let $Q_i=L_iU_i$ be the standard parabolic in $G_i$ corresponding to the three simple roots, that is, $[L_i,L_i]=\SL_2^3$. 
We shall prove that $\Theta_i(\sigma)$, where $\sigma$ is a tempered representation of $\SL_2$, are Langlands 
 quotients of standard modules supported on $Q_i$.  To that end, let 
 $T_i$ be the connected component of the center of $L_i$. It is a split torus of the dimension $2,3$ and $4$, respectively. One checks 
that $[L_i,L_i]\cap T_i= \mu_2^2$.  It follows that 
\[ 
L_i/T_i\cong \SL_2^3/\mu_2^2 
\] 
and $\mini(\sigma)$ is thus a representation of $L_i$. We shall prove that $\Theta_i(\sigma)$ is the 
Langlands quotient of $\Ind_{Q_i}^{G_i}(\mini(\sigma) \otimes \chi_i)$ where $\chi_i$ is a positive character of $L_i$. We shall 
describe $\chi_i$ by giving the weight $\lambda_i$ such that 
\[ 
\chi_i(\alpha^{\vee}(t))= |t|^{\lambda_i(\alpha^{\vee})} 
\] 
for any co-root $\alpha^{\vee}$.  At the same time we shall verify that the Langlands parameter of $\Theta_i(\sigma)$ is 
\[ 
\varphi_{G_i} :  WD_F  \rightarrow  G_i^{\vee}, 
\] 
described in the introduction in terms of the principal $\SL_2$ in $\Aut(J_i)$. However, this $\SL_2$ is also principal in $S_i^{\vee}$, the dual group of the 
Levi factor $S_i$ of the Siegel parabolic from Section \ref{S:principal} and denoted $M$ there. 
Hence $\lambda_i$ should be a $W_{G_i}$-conjugate of $\rho_{S_i}$. 
We give details on case by case basis, using reverse engineering.

\vskip 10pt 

\noindent 
Case $G_6=A_5$. Here $S_6 =A_2 \times A_2$ and $\rho_{S_6}=(1,0,1,-1,0,1)$ which, after using the Weyl group $W_{G_6}$,  
can be put in the positive position 
\[ 
\lambda_6=(1,-1,0,0,1,1).  
\] 
Observe that $\lambda_6$ is perpendicular to the three black simple roots. Thus it defines a positive character $\chi_6$ of $L_6$ such that 
$\Ind_{Q_6}^{G_6}(\mini(\sigma) \otimes \chi_6)$ is a standard module. The parameter of its 
  Langlands quotient is  $\varphi_{G_6}$.  

\vskip 10pt 

\noindent 
Case $G_7=D_6$. Here $S_7=A_5$, but this $A_5$ is not the one containing the three black roots. 
  Hence $\rho_{S_7}=(-5,-3,-1,1,3,5)/2$ which, after using the Weyl group $W_{G_7}$, 
can be put in the positive position 
\[ 
\lambda_7=(-1,1,3,3, 5,5)/2.  
\] 
Then $\lambda_7$ is perpendicular to the three black simple roots. Thus it defines a positive character $\chi_7$ of $L_7$ such that 
$\Ind_{Q_7}^{G_7}(\mini(\sigma) \otimes \chi_7)$ is a standard module. The parameter of its 
 Langlands quotient is  $\varphi_{G_7}$.  

.  

\vskip 10pt 

\noindent 
Case $G_8=E_7$. Here $S_8=D_6$ and  $\rho_{S_8}=(0,1,2,3,4,-4,-4,4)$ which, after using the Weyl group $W_{G_8}$ 
can be put in the positive position 
\[ 
\lambda_8=(-1,1,3,3,5,5,-11,11)/2.
\] 
Again, $\lambda_8$ is perpendicular to the three black  simple roots. Thus it defines a positive character $\chi_8$ of $L_8$ such that 
$\Ind_{Q_8}^{G_8}(\mini(\sigma) \otimes \chi_8)$ is a standard module. The parameter of its 
  Langlands quotient is  $\varphi_{G_8}$.

\begin{prop} Assume that $\sigma$ is tempered. 
In each of the three cases above,  $\Theta_i(\sigma)$ is the Langlands quotient of $\Ind_{Q_i}^{G_i}(\mini(\sigma) \otimes \chi_i)$. 
\end{prop}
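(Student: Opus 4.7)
The plan is to argue by induction on the rank $i \in \{6,7,8\}$, propagating the description through the tower $\mathscr G_5 < \mathscr G_6 < \mathscr G_7 < \mathscr G_8$ via Proposition \ref{P:induction}. The base of the recursion is the case $i = 5$: there $\mathscr G_5 = D_5$, $\mathscr M_5^{\mathrm{der}} = D_4$, and Proposition \ref{P:induction} combined with the mini-theta correspondence (Proposition \ref{P:mini_theta}) identifies $r_{P_5}(\Theta_5(\sigma))$ with $\mini(\sigma) \otimes |\cdot|^{r_5}$ as a module for $M_5$, on which $M_5^{\mathrm{der}} = \SL_2' \times \SL_4$ acts through the $\SL_2^3/\mu_2^2$-quotient. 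Since $G_5 = \SL_4 = \Spin_6$, Lemma \ref{lem_classical_lift_general_case} then packages $\Theta_5(\sigma)$ as the Langlands quotient of the corresponding $A_1^3$-induction on $\SL_4$, providing the base case with a specific positive character $\chi_5$.

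For the inductive step, I would first extend Proposition \ref{P:induction} from supercuspidal $\sigma$ to all tempered $\sigma$. The only role of supercuspidality in that proof is to kill the contribution of the bottom piece $C_c(\Omega_i)$ to Jacquet-module $\Hom$'s, and for tempered $\sigma$ the same conclusion holds because the centre of $M_i$ acts on $C_c(\Omega_i)$ via the strictly positive power $|\cdot|^{s_i}$, which is incompatible with the unitary central character of a tempered $\sigma$. Combined with the irreducibility of $\Theta_i(\sigma)$ for tempered $\sigma$ established in Section \ref{S:exceptional}, this yields
\[
r_{P_i}(\Theta_i(\sigma)) \;\cong\; \Theta_{i-1}(\sigma) \otimes |\cdot|_{M_i}^{r_i}.
\]
Because $Q_i$ is built from three simple roots all contained in $M_i^{\mathrm{der}} = G_{i-1}$ (modulo the trivial $\SL_2'$-factor when $i = 6$), the intersection $Q_{i-1} := Q_i \cap M_i^{\mathrm{der}}$ is precisely the $A_1^3$-parabolic of $G_{i-1}$ used at the previous step. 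By Frobenius reciprocity and the inductive hypothesis $\Theta_{i-1}(\sigma) \hookrightarrow \Ind_{\bar Q_{i-1}}^{G_{i-1}}(\mini(\sigma) \otimes \chi_{i-1})$, induction in stages produces an embedding
\[
\Theta_i(\sigma) \;\hookrightarrow\; \Ind_{\bar Q_i}^{G_i}\bigl(\mini(\sigma) \otimes \chi_i\bigr),
\]
where $\chi_i := \chi_{i-1} \otimes |\cdot|^{r_i}$ is the accumulated character of $L_i$.

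The remaining task is the bookkeeping that $\chi_i$ coincides with the character encoded by the explicit weight $\lambda_i$ described in the paragraphs preceding the proposition, and that $\lambda_i$ is positive (pairs positively with the simple roots of $G_i$ outside $L_i$). This is a coordinate computation in each of the three cases: one recognizes $\lambda_i$ as a $W_{G_i}$-conjugate of $\rho_{S_i}$ obtained by assembling the twists $r_5, r_6, \ldots, r_i$ and the base $\chi_5$ into the coordinate vector prescribed by the $E_7$ root system. The main obstacle is exactly this matching, since $\rho_{S_i}$ must be Weyl-conjugated into the correct chamber and the tower twists, defined in terms of $|\cdot|_{M_i}$, must be re-expressed in the uniform coordinates of the weight lattice. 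Once verified, positivity of $\chi_i$ ensures that $\Ind_{Q_i}^{G_i}(\mini(\sigma) \otimes \chi_i)$ is a standard module with unique Langlands quotient, and since $\Theta_i(\sigma)$ is irreducible and embeds into the opposite induction as above, it is that Langlands quotient.
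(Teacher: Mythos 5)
Your overall architecture is the paper's: compute $r_{Q_i}(\Theta_i(\sigma))$ by composing the one-step Jacquet functors $r_{P_i}$ down the tower, use Proposition \ref{P:induction} at each step, and then match the accumulated twists $r_i$ against the explicit weights $\lambda_i$; the conclusion via Frobenius reciprocity (embedding into the induction from the opposite parabolic, equivalently Langlands quotient of $\Ind_{Q_i}^{G_i}(\mini(\sigma)\otimes\chi_i)$) is also the paper's. Two genuine issues remain.

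First, your proposed extension of Proposition \ref{P:induction} to all tempered $\sigma$ is flawed. Whether $C_c(\Omega_i)$ contributes is a question about $\sigma$ as an $\SL_2$-module, not about the central character of $M_i$: the character $|\cdot|_{\mathscr M_i}^{s_i}$ is trivial on the (perfect) dual-pair $\SL_2$, so it cannot obstruct anything. Restricted to an $\SL_2$-orbit, $C_c(\Omega_i)$ is $\ind_U^{\SL_2}(1)\cong \ind_B^{\SL_2}(\ind_U^B(1))$, and $\ind_U^B(1)$ contains every character of $T$; hence this piece surjects onto every principal series and in particular onto the Steinberg representation and the tempered summands $\pi_\chi^{\pm}$. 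The argument that kills the bottom piece really does use supercuspidality. The paper avoids this by proving the Jacquet-module identity only for supercuspidal $\sigma$ and invoking the separately established descriptions of $\Theta(\St)$ and $\Theta(\pi_\chi^{\pm})$ (as quotients of degenerate principal series from the Siegel parabolic) for the remaining tempered representations; you should do the same, or supply a different argument for those cases.

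Second, the final ``bookkeeping'' is deferred rather than done: the content of the proposition is precisely the verification that the differences $\lambda_{i-1}-\lambda_i$ equal $r_i$ times the fundamental character of the pair $(G_i,M_i)$ (e.g.\ $\lambda_7-\lambda_8=-(0,0,0,0,0,0,-11,11)/2$, which is $-11/2$ times the fundamental character for $(E_7,D_6)$), together with positivity of $\lambda_i$ on the roots outside $L_i$. This is a finite computation and your plan for it is correct, but as written it is a placeholder, not a proof. Your base case at $i=5$ via mini-theta and classical theta for $\Spin_6\cong\SL_4$ is a reasonable (and slightly more explicit) substitute for the $A_5$ case the paper leaves as an exercise.
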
 
\begin{proof} 
To keep the exposition simple we shall assume that $\sigma$ is supercuspidal, otherwise we already determined $\Theta_i(\sigma)$.  
In this case we shall prove that  $r_{Q_i}(\Theta_i(\sigma)) \cong \mini(\sigma) \otimes \chi^{-1}_i$.  
This implies that $\Theta_i(\sigma)$ is the unique sub of $\Ind_{Q_i}^{G_i}(\mini(\sigma) \otimes \chi^{-1}_i)$, an equivalent statement.  

Now observe that the normalized Jacquet functor $r_{Q_i}$ is simply the composite 
\[ 
r_{Q_i}= r_{P_4}\circ \ldots \circ r_{P_i}
\] 
hence $r_{Q_i}(\Theta_i(\sigma)) \cong \mini(\sigma) \otimes \chi^{-1}_i$ will be established by induction on $i$. 

We leave as an exercise to the reader to check this for $G_6=A_5$. We move on to $G_7=D_6$.  Proposition \ref{P:induction} 
says that $r_{P_7}(\Theta_7(\sigma))$ is $\Theta(\sigma_6)$  twisted by a positive character of the Levi $M_7=A_5$ corresponding to 
$-3$ times the fundamental character for the pair $(D_6,A_5)$.  Since the difference 
\[ 
\lambda_6-\lambda_7 = (1,-1,0,0,1,1)- (-1,1,3,3, 5,5)/2 = -(-3,3,3,3,3,3)/2  
\] 
is precisely that character, the proposition follows for $G_7=D_6$. We move on to $G_8=E_7$.  Proposition \ref{P:induction} 
says that $r_{P_8}(\Theta_8(\sigma))$ is $\Theta(\sigma_7)$   twisted by a positive character of the Levi $M_8=D_6$ corresponding to 
$-11/2$ times the fundamental character for the pair $(E_7,D_6)$.  Since the difference  
\[ 
\lambda_7-\lambda_8 = (-1,1,3,3, 5,5,0,0)/2  - (-1,1,3,3,5,5,-11,11)/2 =-(0,0,0,0,0,0, -11,11)/2  
\] 
is precisely that character, the proposition follows for $G_8=E_7$. 
\end{proof}

\noindent\textbf{Acknowledgment} A part of this work was done during the conference on the occasion of 70-th birthday of Marko Tadi\'c at 
the University of Zagreb. The authors would like to thank Neven Grbac for hospitality and wonderful organization.  We 
thank Wee Teck Gan and Chen Wan for explaining the connection of this work to the 
program of Ben-Zvi, Sakellaridis and Venkatesh.

\bibliography{E8_E7}
\bibliographystyle{amsalpha}

\end{document}